\newcommand{\beq}[1]{\begin{equation} \label{#1}}
\newcommand{\eeq}{\end{equation}}
\newcommand{\bea}{\bed\begin{array}{rl}}
\newcommand{\eea}{\end{array}\eed}
\newcommand{\bed}{\begin{displaymath}}
\newcommand{\eed}{\end{displaymath}}
\newcommand{\barray}{\begin{array}{ll}}
\newcommand{\earray}{\end{array}}
\newcommand{\disp}{\displaystyle}
\newcommand{\ad}{&\!\disp}
\newcommand{\aad}{&\disp}
\newcommand{\al}{\alpha}
\newcommand{\e}{\varepsilon}
\newcommand{\la}{\lambda}
\newcommand{\sg}{\sigma}
\newcommand{\ga}{\gamma}
\newcommand{\Ga}{\Gamma}
\newcommand{\dl}{\delta}
\newcommand{\Dl}{\Delta}
\newcommand{\cd}{(\cdot)}
\newcommand{\sqe}{\sqrt{\e}}
\def\phi{\varphi}
\def\indi{{\bf 1}}
\def\half{\frac{1}{2}}
\newcommand{\CA}{{\mathcal A}}
\newcommand{\CF}{{\mathcal F}}
\newcommand{\CM}{\mathcal{M}}
\newcommand{\CU}{\mathcal{U}}
\newcommand{\CO}{\mathcal{O}}
\newcommand{\CB}{\mathcal{B}}
\newcommand{\CL}{\mathcal{L}}
\newcommand{\CP}{\mathcal{P}}
\newcommand{\CK}{\mathcal{K}}
\newcommand{\CD}{\mathcal{D}}
\newcommand{\CR}{\mathcal{R}}
\newcommand{\CS}{\mathcal{S}}
\newcommand{\CV}{\mathcal{V}}
\newcommand{\CH}{\mathcal{H}}
\newcommand{\CG}{\mathcal{G}}
\newcommand{\EE}{{\mathbb E}}
\newcommand{\PP}{{\mathbb P}}
\newcommand{\NN}{{\mathbb N}}
\newcommand{\rr}{{\mathbb R}}
\newcommand{\TT}{{\mathbb T}}
\newcommand{\MM}{{\mathbb M}}
\newcommand{\LL}{{\mathbb L}}
\newcommand{\HH}{{\mathbb H}}
\newcommand{\bv}{\mathbf{v}}
\newcommand{\lbar}{\overline}
\newcommand{\wdt}{\widetilde}
\newcommand{\wdh}{\widehat}
\numberwithin{equation}{section}
\newtheorem{thm}{Theorem}[section]
\newtheorem{lem}[thm]{Lemma}
\newtheorem{defn}[thm]{Definition}
\newtheorem{cor}[thm]{Corollary}
\newtheorem{prop}[thm]{Proposition}
\newtheorem{rem}[thm]{Remark}
\newcommand{\thmref}[1]{Theorem~{\rm \ref{#1}}}
\newcommand{\lemref}[1]{Lemma~{\rm \ref{#1}}}
\newcommand{\corref}[1]{Corollary~{\rm \ref{#1}}}
\newcommand{\propref}[1]{Proposition~{\rm \ref{#1}}}
\newcommand{\remref}[1]{Remark~{\rm \ref{#1}}}
\newcommand{\secref}[1]{Section~{\rm \ref{#1}}}
\newcommand{\etae}{\eta^{\e,u^\e,\phi^\e}}
\newcommand{\Xe}{X^{\e,u^\e,\phi^\e}}
\newcommand{\Ye}{Y^{\e,u^\e,\phi^\e}}
\newcommand{\trace}{\text{Tr}}
\begin{document}
\title{Moderate Deviation Principles for Stochastic Differential Equations in Fast-Varying Markovian Environment
}
\date{}
\author{Hongjiang Qian \thanks{Department of Mathematics and Statistics, Auburn University, Auburn, AL 36849, United States. E-mail: hjqian.math@gmail.com}}
\maketitle
\begin{abstract}
In this paper, we proved moderate deviation principles for a fully coupled two-time-scale stochastic systems, where the slow process is given by stochastic differential equations with small noise, while the fast process is a rapidly changing purely jump process on finite state space. The system is fully coupled in that the drift and diffusion coefficients of the slow process, as well as the jump distribution of the fast process, depend on states of both processes. Moreover, the diffusion component in the slow process can be degenerate. Our approach is based on the combination of the weak convergence method from [A. Budhiraja, P. Dupuis, and A. Ganguly, Electron. J. Probab. 23 (2018), pp. 1-33; Ann. Probab. 44 (2016), pp. 1723-1775] with Poisson equation for the fast-varying purely jump process.

\medskip

{\bf Keywords}. Moderate deviations; switching diffusion; jump diffusion; Markov chain; weak convergence; (nonlocal) Poisson equations.

\medskip

{\bf AMS Subject Classification (2020):}. 60F10, 60J10, 60K37, 60H10.

\end{abstract}

\section{Introduction}
In this paper, we study moderate deviation principles (MDPs) for a fully coupled two-time-scale stochastic system $(X^\e\cd, Y^\e\cd)$. The $X^\e$ is a $d$-dimensional stochastic process given as the solution of the following stochastic differential equations:
\beq{sde}
dX^\e(t) = b(X^\e(t), Y^\e(t))dt + \sqrt{\e} \sigma(X^\e(t), Y^\e(t))dW(t), \ X^\e(0) = x_0, Y^\e(0)=y_0,
\eeq
where $\e$ is the small parameter, $W\cd$ represents a $d$-dimensional Brownian motion. The functions $b: \rr^d \times \LL \rightarrow \rr^d$ and $\sigma: \mathbb{R}^d \times \LL \rightarrow \rr^{d \times d}$ are continuous functions satisfying suitable conditions, which will be specified later.

The $Y^\e$ is a fast-varying jump process defined over a finite state space $\LL = \{1, \dots, |\LL|\}$, and its transition probabilities are determined by $q: \LL\times \LL \times \rr^d \to [0,1]$, which depends on both $X^\e$ and $Y^\e$. Specifically, the transition rates of $Y^\e$ are defined as:
\bea
\PP(Y^\e(t+\Dl)=j |Y^\e(t)=i, X^\e(t)=x)=\left\{\barray
\e^{-1} q_{ij}(x)\Dl+o(\Dl),\ad \text{ if } j \neq i, \\
1+ \e^{-1} q_{ii}(x) \Dl + o(\Dl),\ad \text{ otherwise}.
\earray\right.
\eea
Here $q_{ij}(x)$ is the shorthand of $q(i,j,x)$ for $x\in \rr^d$, and $Q(x):=(q_{ij}(x))_{i,j\in \LL}$ is the generator of a Markov chain. The process $Y^\e$ is also called state-dependent regime-switching process and is widely used to model the discrete random events in systems such as manufacturing, product planing, queuing networks, Monte Carlo simulations, and wireless communications, and so on. We refer to the work \cite{YZ10} for its comprehensive study on switching process.

A key feature of system \eqref{sde} is its combination of continuous dynamics (via $X^\e$) and discrete events (via $Y^\e$). The process $X^\e$ evolves on a slower timescale compared to $Y^\e$, which oscillates rapidly due to the timescale factor $\e^{-1}$. Under suitable conditions, the fast process $Y^\e$, for fixed $x\in \rr^d$, admits a unique invariant measure $\mu^x=\{\mu_i^x\}_{i\in \LL}$, leading to the well-known averaging principle. That is, $X^\e$ converges to the solution of an averaged equation:
\beq{bar-X}
d\bar X(t) = \bar{b}( 
\bar X(t))dt, \quad  \bar X(t) = x_0,
\eeq
in various sense, where $\bar b(x):=\sum_{i\in \LL} b(x,i) \mu_i^x$; see  recent work  \cite{MS24} on averaging principle for two time-scale regime-switching diffusions. For $x\in \rr^d$ and $i,j \in \LL$, we sometimes write $b(x,i),\sg(x,i), q(i,j,x)$ as $b_i(x), \sg_i(x), q_{ij}(x)$ for notation convenience. For each $(x,i,j)\in \rr^d\times \LL\times \LL$, we let $
E_{ij}(x)$ be the interval $[0,q_{ij}(x)]$. 
Define
\bea\ad \zeta := \sup_{(x,i,j)\in \rr^d \times \LL \times \LL} |q_{ij}(x)|+1<\infty.
\eea
We denote by $\la = \la_\zeta$ the Lebesgue measure on the space $([0,\zeta], \CB([0,\zeta]))$.

In this paper, we focus on deviations of $X^\e(t)$ from the averaged solution $\bar X(t)$ as $\e\to 0$, specifically on analyzing the asymptotic behavior of trajectory:
\beq{eta}
\eta^\e(t) = \frac{X^\e(t) - \bar X(t)}{\sqrt{\e} h(\e)}, \quad t\in [0,T],
\eeq
as $\e \to 0$, where $h(\e)$ is the deviation scale satisfying 
\beq{dev-h}
h(\e)\to +\infty, \quad \sqrt{\e} h(\e)\to 0,\text{ as } \e \to 0.
\eeq

The result concerning to decay rate of probabilities for $\eta^\e$ is called moderate deviation principles. It bridges the gap between central limit theorems (CLTs) and large deviation principles (LDPs). If $h(\e)=1$, the limiting behavior of $\eta^\e(t)$ falls in the domain of CLTs; see \cite{SX25}. For $h(\e)=\e^{-1/2}$, the limiting behavior of $\eta^\e(t)$ leads to large deviation principles in \cite{BDG18}, where the authors established the large deviation principle for ${X^\e}$ in $C([0,T];\rr^d)$ as $\e \rightarrow 0$, using the weak convergence method \cite{DE97}. Define the function $\ell:[0,\infty)\to [0,\infty)$ by $\ell(x)=x\log x-x +1$. Let
\bea
\TT &\!\!\! :=\{(i,j) \in \LL\times \LL: q_{ij}(x)>0 \text{ for some } x \in \rr^d\}, \\
\CR &\!\!\! :=\{\phi=(\phi_{ij})_{(i,j)\in \TT}: \phi_{ij}: [0,T]\times [0,\zeta] \to \rr_+ \text{ is a measurable map}, (i,j)\in \TT \}.
\eea
They demonstrated ${X^\e}$ adheres to LDPs with speed $\e^{-1}$ and rate function $J(\cdot)$ given by
\beq{I-LDP}\barray\ad 
\!\!\!\!\!\!\!\! J(\xi)=\!\!\!\! \inf_{(u,\phi,\pi)\in \CV(\xi)} \bigg\{\sum_{i\in \LL} \half\int_0^T \|u_i(s)\|^2 \pi_i(s) ds +\sum_{(i,j)\in \TT}\int_{[0,T]\times [0,\zeta]} \ell(\phi_{ij}(s,z))\pi_i(s)\la_\zeta(dz)ds\bigg\},
\earray\eeq
where $\CV(\xi)$ is the collection of all $
\{u=(u_i), \phi=(\phi_{ij}), \pi=(\pi_i)\}\in (\MM([0,T];\rr^d))^{|\LL|} \times \CR \times \MM([0,T];\CP(\LL))$, such that $\int_0^T \|u_i(s)\|^2 \pi_i(s) ds <\infty$, and for each $i\in \LL$, 
\bea\ad 
\xi(t)=x_0+\sum_{j\in \LL}\int_0^t b_j(\xi(s))\pi_j(s)ds +  \sum_{j\in \LL} \int_0^t \sg_j(\xi(s)) u_j(s)\pi_j(s)ds, \; t\in [0,T],
\eea
and 
\beq{ldp-Ga}
\sum_{i\in \LL} \pi_i(s)\Ga_{ij}^{\phi_{i,\cdot}(s,\cdot)}(\xi(s))=0, \quad \text{ for a.e. }  s\in [0,T] \text{ and } j\in \LL,
\eeq
where, for $\psi=\{\psi_j\}_{j\in \LL}$ with each $\psi_j:[0,\zeta] \to \rr_+$ measurable, the $\Ga_{ij}^\psi$ is defined as 
\beq{Ga}\barray
\Ga_{ij}^\psi(x)=\left\{\barray
\int_{E_{ij}(x)} \psi_j(z)\la_\zeta(dz), \ad \quad i \neq j,\\
-\sum_{y:y\neq j} \Ga_{jy}^\psi(x), \ad \quad i=j.
\earray\right.
\earray\eeq
Here $\MM([0,T];\rr^d), \MM([0,T];\CP(\LL))$ denote the space of measurable maps from $[0,T]$ to $\rr^d$ and from $[0,T]$ to $\CP(\LL)$, respectively.

The primary objective of this paper is to investigate moderate deviation principles (MDPs) for the process $X^\e$ (or say large deviation principle for $\eta^\e$), focusing on the intermediate regime between CLT and LDP.
To the best of our knowledge, no prior results have been established for the MDP of $X^\e$ in a fast-varying purely-jump random environment on a finite state space. We address this gap in current paper. For a general theory of large deviations, we refer to the works of \cite{DZ09,DE97,FK06}.

The study of large and moderate deviation principles for two-time-scale (or multi-time-scale) stochastic dynamics has attracted considerable interest in the research community recent years. Early work on moderate deviations for diffusion processes can be traced back to \cite{BF77,Fre78}, which considered cases where the slow motion has  no diffusion coefficients.  In \cite{Gui03}, Guillin explored the MDPs for the diffusion process $X^\e$ of the form \eqref{sde}, where the fast-varying environment $Y^\e$ is modeled as an exponentially ergodic Markov process, independent of the Brownian motion $W\cd$. Such work builds on \cite{Gui01}, which addresses MDPs for inhomogeneous functionals of exponentially ergodic Markov processes. When the fast-varying process is modeled by a finite-state homogeneous or non-homogeneous Markov chain, MDPs were developed in \cite{HY14} for diffusion processes and in \cite{QY22} for Langevin dynamics. The literature  mentioned above generally assume that the fast-varying process is independent of the slow process.

In contrast, coupled systems have also been investigated a lot. In \cite{MS17}, Morse and Spiliopulos examined MDPs for fully coupled fast-slow diffusions using Poisson equations and weak convergence method. The Poisson equation approach can also be extended to study infinite dimensional fast-slow dynamics, where an elliptic Kolmogorov equation in Hilbert space replaces the role of the Poisson equation. Interested reader are referred to the work of \cite{GSS23} which studied MDPs for fast-slow stochastic reaction-diffusion equations, where the slow process has diffusion coefficients independent of the fast dynamics. The MDPs of fully coupled fast-slow stochastic reaction-diffusion equations still  remain an open problem due to the lack of regularity results for elliptic Kolmogorov equations in infinite dimensional space. For further insights on LDPs and MDPs in  two- or multi-time-scale stochastic dynamics,  readers are referred to \cite{Ver99,Ver00,Lip96,HYZ11,HY14a,HMS16,Puh16,KP17,HSS19} and references therein.

The MDPs for fully coupled stochastic differential equations where the fast process is given by a state-dependent regime-switching process is still unknown. Though we still rely on Poisson equation technique, the discrete feature of fast process makes analysis more difficult and results are different compared to that of fast-slow diffusions in \cite{MS17}. 
The starting point of our approach is the variational representation for the expectation of positive functionals of Brownian motions and Poisson random measures, as developed in \cite{BD98} and \cite{BDM11}. Using this representation, we apply the weak convergence method to analyze asymptotic behavior of the system. One major advantage of this method is that it avoids proving exponential tightness  and calculating  exponential-type probability estimates, which are typically challenging with traditional methods. Instead, one only needs to establish the tightness of a controlled dynamics and analyze limiting behaviors of key quantities. In the context of moderate deviations, the controlled dynamics are described by $(\Xe, \Ye)$, the solution to \eqref{c-sde}. Our primary focus is on the limiting behavior of $\etae$, the controlled version of $\eta^\e$, defined as:
\beq{etae} 
\eta^{\e,u^\e,\phi^\e}(t):=\frac{X^{\e,u^\e,\phi^\e}(t)-\bar X(t)}{\sqe h(\e)},\quad t\in [0,T].
\eeq

Despite various advantages of the weak convergence method, there are still challenges in proving the tightness of the controlled process, particularly in two- or multi-time-scale systems. The first challenge arises from a singular term in the decomposition \eqref{b-eta} of $\etae$:
\bea\ad 
\frac{1}{\sqe h(\e)} \int_0^t \big[ b(\Xe(s),\Ye(s)) - \bar b(\Xe(s)) \big] ds,
\eea
which involves the factor $1/\sqe h(\e)$ that diverges as $\e \to 0$. To address this difficulty, we apply Poisson equation techniques combined with some approximation methods allowing us to use It\^{o} formula to manage the singular term. Compared to the case in \cite{MS17}, where the fast process is a diffusion process, our state space of fast processes is finite and discrete. Furthermore, the diffusion coefficient in the slow process can be degenerate. For the study of Poisson equation concerning to diffusion, we refer to the work \cite{PV01, PV03} and reference therein. Recently, Sun and Xie \cite{SX25} studied the Poisson equation for the fast-varying purely jump process and apply them to investigate averaging principles and CLTs for multi-scale stochastic differential equations with state-dependent switching. One of the motivation of this work comes from \cite{SX25}.

The second challenge is identifying a suitable space in which the centered and normalized control $\{\psi^\e\}$ is tight, where $\psi^\e:=(\phi^\e-1)/\sqe h(\e)$. In case of LDPs for $X^\e$, the boundedness of the control cost $L_{T,2}(\phi^\e)$ in \eqref{LT-g} ensures the tightness of $\phi^\e$ in a suitable space, which guarantees the tightness of the controlled process $\Xe\cd$. However, in the moderate deviations setting, even though we have a stronger bound on the control cost, i.e., $L_{T,2}(\phi^\e)\leq M\e h^2(\e)$ for some constant $M>0$, the  tightness of $\phi^\e$ alone does not guarantee the tightness of $\etae\cd$. Remember in moderate deviations context, the focus is on $\etae$ and we need to  establish the tightness of $\psi^\e$. However, although each $\psi^\e$ is square integrable for fixed $\e>0$, the family $\{\psi^\e\}$ does not admit a uniform $L^2([0,T]\times [0,\zeta])$ bound as $\e\to 0$. The singular coefficient $1/(\sqe h(\e))$ can cause the norms to blow up and prevents  $\{\psi^\e\}_{\e>0}$ from being tight. To resolve this, we use the idea in \cite{BDG16} to split $\psi^\e$ into two terms: one of which lies in a closed ball in $L^2([0,T]\times [0,\zeta])$, while the other approaches zero in a suitable manner. By combining the Poisson equation for fast-varying purely jump processes, the splitting technique in \cite{BDG16}, and some approximation methods, we establish the tightness of $\etae$ and subsequently prove the MDP of $X^\e$ using weak convergence method.

The remainder of the paper is structured as follows: In \secref{sec:pre}, we introduce notation and assumptions, followed by some preliminary results and our main findings. \secref{sec:rate} presents an equivalent form of the rate function and establishes its goodness. In \secref{sec:lim}, we examine the limiting behavior of the controlled process, with a focus on proving its tightness. \secref{sec:up} is dedicated to proving large deviation upper bound for $\eta^\e$, while \secref{sec:low} establishes the corresponding lower bound. In \secref{sec:ext}, we extend our main results from bounded to unbounded drift. Finally, \secref{sec:dis} extends the result to the case where the slow process is a jump diffusion, discusses the main challenges in generalizing the analysis from a finite to a countable state space for the Markov chain, and presents the corresponding result when the fast-varying process belongs to a specific class of jump diffusions in the Euclidean space.

\section{Preliminaries and main results}\label{sec:pre}
In this paper, we use the following notation. For a Polish space $\CS$, let $\CP(\CS)$ denote the space of probability measures on $\CS$, and $\CM_F(\CS)$ denote the space of finite measures on $\CS$, both equipped with the topology of weak convergence. The space of continuous functions from $[0,T]$ to $\CS$, equipped with the uniform topology, is denoted by $C([0,T];\CS)$. We denote the space of continuous and bounded functions on $\CS$ by $C_b(\CS)$. For a finite set $\LL$, $|\LL|$ represents the cardinality of $\LL$, and $\MM(\LL)$ denotes the space of real-valued functions on $\LL$. Given $f \in \MM(\LL)$ and a probability function $r : \LL \to [0,1]$, we use $r(A)$ to denote $\sum_{x\in A}r(x)$ for any subset $A \subset \LL$, and $\int_\LL f(x)r(dx)$ to represent $\sum_{x\in \LL} f(x)r(x)$. The space of Borel maps from $[0,T]$ to $\CS$ is denoted by $\MM([0,T];\CS)$. For a vector $z \in \rr^d$, $z^\top$ denotes its transpose. In the following, we use $K$ to denote a generic constant varying from time to time.

We proceed with the following assumptions.

\noindent{(H1)} For each $i \in \LL$, the function $b(\cdot, i)$ is bounded, continuously differentiable, and has a bounded first-order derivative. Moreover, there exists a constant $L_{0}\in (0,\infty)$ such that for all $i,j\in \LL$ and $x,x'\in \rr^d$, the following Lipschitz continuity holds:
\beq{lip-cond}\barray\ad 
\!\!\!\!\!\!\!\!\! \|b(x,i)-b(x',i)\|+\|\sg(x,i)-\sg(x',i)\|+|q(x,i,j)-q(x',i,j)|\leq L_{0} \|x-x'\|.
\earray\eeq
\noindent{(H2)} The generator  $Q(x)=(q_{ij}(x))_{i,j\in \LL}: \rr^d\to \rr^{\LL\times \LL}$ is measurable function such that for $x\in \rr^d$, $
q_{ij}(x)\geq 0, \; \forall i\neq j\in \LL,\; \sum_{j\in\LL} q_{ij}(x)=0, \forall i\in \LL$.
Moreover, we assume $Q(x)$ is irreducible. That is, for any $x\in\rr^d$, the following equation 
\beq{mu-Q} 
\mu^x Q(x)=0 \text{ and } \sum_{i \in \LL} \mu_i^x=1,
\eeq
has a unique solution $\mu^x=(\mu_1^x,\dots, \mu_{|\LL|}^x)$ with $\mu_i^x>0$ for all $i\in \LL$. The transition probability matrix $P_t^x:=\{P_t^x(i,j)\}_{i,j\in \LL}$ associated with $Q(x)$ is assumed to be exponentially ergodic uniformly in $x$, i.e., there exists $K>0, \varrho>0$ such that
\bea\ad 
\sup_{i\in \LL, x\in \rr^d} \|P_t^x(i,\cdot)-\mu^x\|_{\text{var}} \leq K e^{-\varrho t},\; \forall\; t>0.
\eea
\smallskip

\noindent{(H3)} There exists a constant $K_Q>0$ such that for all $x,x'\in \rr^d$,
\bea\ad 
\|Q(x)-Q(x')\|_{\ell_1}:=\sup_{i\in \LL} \sum_{j\neq i} |q_{ij}(x)-q_{ij}(x')| \leq K_Q \|x-x'\|,
\eea
and
\bea\ad 
\bar \kappa_Q:= \sup_{i\in \LL}\sup_{j\in \LL, j\neq i} \sup_{x\in \rr^d} q_{ij}(x) <\infty \quad  \underline{\kappa}_Q:= \inf_{x\in \rr^d} \min_{(i,j)\in \TT} q_{ij}(x)>0.
\eea
\begin{rem}\rm{
 Under assumptions (H2) and (H3), the invariant measure $\{\mu_i^x\}_{i\in \LL}$ is unique and has a positive lower bound, i.e., 
\beq{und-mu}
\inf_{x\in \rr^d}\min_{j\in \LL}\mu_j^x:=\underline{\mu}>0,
\eeq
see \cite{BDG18} and reference therein.
}
\end{rem}
It is well-known that the dynamics of discrete component or the switching process $Y^\e$ can be described by a stochastic differential equation driven by a finite collection of Poisson random measures (PRMs); see \cite{MY06,Sko09}. For $(i,j)\in \TT$, let $\bar N_{ij}$ be a PRM on $[0,\zeta]\times [0,T]\times \rr^+$ with intensity measure $\la_\zeta \otimes \la_T \otimes \la_\infty$, where $\la_T$ and $\la_\infty$ denote the Lebesgue measure on $[0,T]$ and $\rr^+$, respectively, on some complete filtered probability space $\{\Omega, \CF, \{\CF_t\}, \PP\}$ such that $
\bar N_{ij}(A\times [0,t]\times B) -t \la_\zeta(A)\la_\infty(B)$ 
is a $\{\CF_t\}$-martingale for all $A\in \CB([0,\zeta])$ and $B\in \CB(\rr^+)$ with $\la_\infty(B)<\infty$. It implies that $
N_{ij}^{\e^{-1}} (dz\times dt):= \bar N_{ij}(dz\times dt \times [0,\e^{-1}])$ 
is a PRM on $[0,\zeta]\times [0,T]$ with intensity measure $\e^{-1} \la_\zeta \otimes \la_T$. It can be regarded as a random variable with values  in the space of finite measure $\CM_F([0,\zeta]\times [0,T])$ on $[0,\zeta]\times [0,T]$ endowed with weak convergence topology. We assume $(\bar N_{ij})_{(i,j)\in \TT}$ are mutually independent and there is an $d$-dimensional $\CF_t$-Brownian motion $W$, independent of $\{\bar N_{ij}\}$, on $\{\Omega, \CF, \{\CF_t\}, \PP\}$. Moreover, we assume for $0\leq s\leq t\leq T$,
\bea
\{W(t)-W(s), \bar N_{ij}(A\times (s,t]\times B): A\in \CB([0,\zeta]), B\in \CB(\rr^+), (i,j)\in \TT\}
\eea
is independent of $\CF_s$.

For each $(i,j)\in \TT$, let $\phi_{ij}$ belong to the class of measurable mapping from $[0,T]\times [0,\zeta]\times \Omega \to [0,\infty)$, we define the counting process $N_{ij}^{\phi_{ij}}$ on $[0,\zeta] \times [0,T]$ by
\beq{N-phi}
N_{ij}^{\phi_{ij}}(U\times [0,t))=\int_{U\times [0,t]\times [0,\infty)} \indi_{[0,\phi(z,s)]}(r) \bar N_{ij}(dz ds dr).
\eeq
The above $N_{ij}^{\phi_{ij}}$ can be recognized as a controlled random measure with $\phi_{ij}$ selecting the intensity for points at location $z$ and time $s$ in a possible random but nonanticipating way. If $\phi_{ij}(z,s,\omega)\equiv \vartheta \in (0,\infty)$, we write $N_{ij}^{\phi_{ij}} = N_{ij}^\vartheta$.

The Markov process $(X^\e,Y^\e)=\{X^\e(t),Y^\e(t)\}_{0\leq t\leq T}$ then can be written as the unique pathwise solution of the following fully coupled stochastic differential equations driven by Brownian motions and Poisson random measures:
\beq{re-sde}
\left\{\barray
dX^\e(t)\ad =b(X^\e(t),Y^\e(t))dt+ \sqrt{\e}\sg(X^\e(t),Y^\e(t))dW(t),\quad X^\e(0)=x_0,\\
dY^\e(t) \ad =\!\! \sum_{(i,j)\in \TT} \int_{[0,\zeta]}(j-i) \indi_{\{Y^\e(t-)=i\}} \indi_{E_{ij}(X^\e(t))}(z) N_{ij}^{\e^{-1}}(dz\times dt), Y^\e(0)=y_0.
\earray\right.
\eeq
For the existence and uniqueness of \eqref{re-sde}, we refer to the work \cite[pp. 103-104]{Sko09}, as well as \cite{IW14,JS13}. The uniqueness of the solution implies that for every $\e>0$, there exists a measurable map, denoted by $\bar \CG^\e: C([0,T];\rr^d)\times (\CM_F([0,\zeta]\times [0,T]))^{|\TT|} \to C([0,T];\rr^d)$, such that $
X^\e=\bar \CG^\e(\sqrt{\e}W, \{\e N_{ij}^{\e^{-1}}\})$. Consequently, there exists a measurable map $\CG^\e:C([0,T];\rr^d) \times (\CM_F([0,\zeta] \times [0,T]))^{|\TT|} \to C([0,T];\rr^d)$ such that $\eta^\e=\CG^\e(\sqe W, \{\e N_{ij}^{\e^{-1}}\})$, where $\eta^\e$ is defined in \eqref{eta}. 

To study the LDP for $\eta^\e$ (i.e.,  moderate deviation principle for $X^\e$), we will employ the weak convergence method; see \cite{BD98,BDM11,DE97,QCY25}. To this end, we first give a variational representation for the positive exponential functional of $\eta^\e$. Denote by $\CP\CF$ the predictable $\sg$-field on $[0,T]\times \Omega$ associated with the filtration $\{\CF_t\}_{0\leq t\leq T}$. Let $
\CP\CF[\TT]:=\{\phi =(\phi_{ij})_{(i,j)\in \TT}: \phi_{ij} \text{ is } \CP\CF \otimes \CB([0,\zeta])\setminus \CB[0,\infty)\text{-measurable for all }$ $(i,j)\in\TT\}$. 
For $h:[0,T]\to \rr^d$, we let 
\beq{LT-h}
L_{T,1}(h)=\half \int_0^T \| h(s)\|^2 ds.
\eeq
For $\phi=\{\phi_{ij}\}_{(i,j)\in \TT}$ such that $\phi_{ij}:[0,T]\times [0,\zeta] \to [0,\infty)$, let
\beq{LT-g}
L_{T,2}(\phi)=\sum_{(i,j)\in \TT} \int_{[0,T]\times [0,\zeta]} \ell (\phi_{ij}(s,z))\la_\zeta(dz) ds.
\eeq
For any $\phi \in \CP\CF[\TT]$, the quantity $
L_{T,2}(\phi)=\sum_{(i,j)\in \TT} \int_{[0,T]\times [0,\zeta]} \ell(\phi_{ij}(s,z,\omega))\la_\zeta(dz)ds$ 
is well defined as a $[0,\infty]$-valued random variable. For $n\in \NN$, let $
\CP\CF[\TT]^b = \{\phi\in \CP\CF[\TT]:\exists\, n\, \in \NN, \phi_{ij}(s,z,\omega) \in [n^{-1},n],\forall\, (s,z,\omega)\in [0,T]\times [0,\zeta]\times \Omega, (i,j)\in \TT\}$.

For $\e>0$ and $M<\infty$, we will consider the following space: 
\beq{S-plus}\barray
S_{+,\e}^M\ad :=\{\phi:[0,T]\times [0,\zeta]\to \rr_{+}| L_{T,2}(\phi)\leq M \e h^2(\e)\}, \\
 S_{\e}^M \ad : =\{\psi: [0,T]\times [0,\zeta] \to \rr \, |\, \psi=(\phi-1)/\sqrt{\e}h(\e), \phi\in S_{+,\e}^M\}.
\earray\eeq
Define $
S_2^M =\{h:[0,T]\to \rr^d: L_{T,1}(h)\leq M\}$, $S_{2,\e}^M=\{h:[0,T]\to \rr^d: L_{T,1}(h)\leq M h^2(\e)\}$.
Let $\CA_2^M$ be the set of $h:[0,T]\to \rr$ such that $h(\cdot,\omega)\in S_2^M, \PP$-a.s., and define $\CA_2:=\cup_{M=1}^\infty \CA_2^M$. Similarly, let $\CA_{2,\e}$ be the set of all $h$ such that $h(\cdot,\omega)\in S_{2,\e}^M$, $\PP$-a.s., and define $\CA_{2,\e}=\cup_{M=1}^\infty \CA_{2,\e}^M $. Moreover, let $\CA_{+,\e}^M$ be the set of all $\phi(\cdot,\cdot,\omega)\in \CP\CF[\TT]^b$ such that $\phi_{ij}(\cdot,\cdot,\omega) \in S_{+,\e}^M, \PP$-a.s. for each $(i,j)\in \TT$. Define $\CA_{+,\e}:=\cup_{M=1}^\infty \CA_{+,\e}^M$. Likewise, define $\CA_\e^M$ as the set of all $\phi(\cdot,\cdot,\omega)\in \CP\CF[\TT]$ such that $\phi_{ij}(\cdot,\cdot,\omega)\in S_\e^M$, $\PP$-a.s. for each $(i,j)\in \TT$, and let $\CA_\e:=\cup_{M=1}^\infty \CA_\e^M$.

Define $\wdt \CU_b^+= \CA_{2,\e} \times \CA_{+,\e}$ and $\CU_b^+= \CA_2 \times \CA_{+,\e}$. Using these space, and recalling the variational representation formula for functionals of Brownian motions from \cite{BD98} as well as for Poisson random measures from \cite{BDM11,BCD13}, we obtain the following:
\begin{thm}\label{thm:var-rep}
Let $F$ be a bounded and measurable real-valued function, we have
\beq{vrep}\barray\ad 
\!\!\!\! -\log\EE\big[e^{-F(W\cd, N^\vartheta)}\big] \\
\aad =\!\!\!\!\! \inf_{(v,\phi)\in \wdt \CU_b^+}\!\!\!\EE\Big[L_{T,1}(v)+\vartheta L_{T,2}(\phi)+F\big(W\cd+\int_0^{\cdot} v(s)ds, N^{\vartheta \phi}\big)\Big],
\earray\eeq
where $\vartheta\in (0,\infty)$ is a constant and $N^\vartheta :=(N_{ij}^\vartheta)_{(i,j)\in \TT}$ with $N_{ij}^\vartheta$ defined under \eqref{N-phi}.
\end{thm}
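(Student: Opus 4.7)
The statement is a joint variational representation that combines the Brownian representation of Boué--Dupuis \cite{BD98} with the Poisson variational representation of Budhiraja--Dupuis--Maroulas \cite{BDM11,BCD13}. The plan is to obtain it by iterating the two one-source representations, using the independence of $W$ and $(\bar N_{ij})_{(i,j)\in\TT}$. Concretely, I would first condition on the $\sigma$-field $\CF^N := \sigma(\bar N_{ij}, (i,j)\in \TT)$, apply the Brownian representation, then apply the Poisson representation to the resulting $\CF^N$-measurable functional, and finally verify that the two control classes glue together into $\wdt\CU_b^+$.

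First, set $G(\omega) := e^{-F(W(\cdot),N^\vartheta)}$ and note $-\log\EE[G] = -\log \EE[\EE[G\mid \CF^N]]$. For a fixed realization of $N^\vartheta$, the functional $w\mapsto F(w,N^\vartheta)$ is bounded and measurable, so the Boué--Dupuis formula applied on the Wiener space (with the conditional filtration that keeps $N^\vartheta$ fixed) gives
\begin{equation*}
-\log \EE\!\left[e^{-F(W(\cdot),N^\vartheta)}\mid \CF^N\right] = \inf_{v\in \CA_{2,\e}}\EE\!\left[L_{T,1}(v)+F\Big(W(\cdot)+\int_0^\cdot v(s)\,ds,\;N^\vartheta\Big)\,\Big|\,\CF^N\right],
\end{equation*}
where $v$ ranges over Brownian controls that are predictable with respect to the enlarged filtration generated by $W$ and $\CF^N$; the union-over-$M$ structure of $\CA_{2,\e}$ suffices because $F$ is bounded, so only finite-cost $v$ matter. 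Passing to expectations and a measurable-selection argument (standard in this framework, cf.\ \cite[Thm.~3.1]{BD98}) yields a single infimum over such $v$.

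Second, define the bounded measurable functional
\begin{equation*}
H(N^\vartheta) := \inf_{v\in\CA_{2,\e}}\EE\!\left[L_{T,1}(v)+F\Big(W(\cdot)+\int_0^\cdot v(s)\,ds,\;N^\vartheta\Big)\,\Big|\,\CF^N\right],
\end{equation*}
so that $-\log \EE[G] = -\log \EE[e^{-H(N^\vartheta)}]$. Apply the Budhiraja--Dupuis--Maroulas variational representation \cite[Thm.~2.3]{BDM11}, \cite{BCD13} to $H$ over the Poisson source $N^\vartheta$, which gives
\begin{equation*}
-\log \EE\!\left[e^{-H(N^\vartheta)}\right] = \inf_{\phi\in \CA_{+,\e}}\EE\!\left[\vartheta L_{T,2}(\phi)+H(N^{\vartheta\phi})\right].
\end{equation*}
Inserting the definition of $H$, using the tower property, and combining the two infima over the product space $\wdt\CU_b^+ = \CA_{2,\e}\times \CA_{+,\e}$ produces the right-hand side of \eqref{vrep}. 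A small but routine point is that the Brownian control $v$ should be predictable with respect to the joint filtration $\CF_t\vee \sigma(N^{\vartheta\phi}|_{[0,t]})$; this is precisely why the class $\CA_{2,\e}$ (respectively $\CA_{+,\e}$) is defined via $\CP\CF$-measurable maps on the common filtered probability space.

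The main obstacle I expect is bookkeeping rather than hard analysis: ensuring the two infimum classes compose correctly and justifying the interchange of $\inf$ and conditional expectation used to write $H$ as a measurable function of $N^\vartheta$. This requires a measurable selection theorem and a density argument showing that the optimization over $v$ with enlarged predictability agrees with the one over $\CA_{2,\e}$; both are standard in the weak-convergence literature and are proved in \cite{BD98,BDM11,BCD13}. Boundedness of $F$ bypasses integrability issues and makes the inner infima finite, so no truncation is needed.
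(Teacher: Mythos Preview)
The paper does not prove this theorem; it simply states it and cites \cite{BD98,BDM11,BCD13} for the variational representations of Brownian and Poisson functionals, treating the joint formula as a known consequence of those works (in particular \cite{BCD13} already contains the combined Brownian--Poisson representation). So there is no ``paper's own proof'' to compare against beyond the citation.

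Your iterative strategy is natural but has a gap that is more than bookkeeping. When you condition on the full $\sigma$-field $\CF^N=\sigma(\bar N_{ij})$ and apply Bou\'e--Dupuis, the admissible Brownian controls $v$ are allowed to depend on the \emph{entire} Poisson path, including its future; the class you obtain is strictly larger than the $\{\CF_t\}$-predictable controls appearing in $\wdt\CU_b^+$. This yields one inequality (the anticipating infimum is no larger), but the reverse direction---showing that the infimum over anticipating $v$ equals the infimum over jointly predictable $v$---is exactly the hard step, and it does not follow from a measurable-selection argument alone. The references you cite do not prove the joint formula by iterating the two one-source representations; \cite{BDM11,BCD13} establish it directly by extending the Bou\'e--Dupuis relative-entropy/weak-convergence machinery to the product noise, which sidesteps the anticipation issue entirely. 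If you want to salvage the iterative route you would need an additional argument (e.g., a density or martingale-representation step) showing that near-optimal anticipating $v$ can be replaced by $\{\CF_t\}$-predictable ones without increasing the cost; otherwise it is cleaner to invoke \cite[Theorem~2.1]{BCD13} directly, as the paper does.
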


In \thmref{thm:var-rep}, we set $F(W\cd, N^\vartheta): =h^2(\e) F\circ \CG^\e(\sqe W, 
\{\e N_{ij}^{\e^{-1}}\})=h^2(\e)F(\eta^\e\cd)$. Dividing by $h^2(\e)$ to both sides of \eqref{vrep}, we obtain variational representation formula for functional of $\eta^\e$:
\beq{var-rep}\barray
\ad -\frac{1}{h^2(\e)}\log \EE\big[\exp(-h^2(\e)F(\eta^\e))\big]\\
\aad\!\!\!\!\!\!=\!\!\!\! \inf_{(v,\phi)\in \wdt  \CU_b^+} \EE\Big[\frac{1}{ h^2(\e)} L_{T,1}(v)+\frac{1}{\e h^2(\e)}L_{T,2}(\phi)+F\circ \CG^\e(\sqrt{\e}W+\int_0^{\cdot}v(s)ds, \e N^{\e^{-1}\phi})\Big]\\
\aad\!\!\!\!\!\!=\!\!\!\!\!\!\! \inf_{(u,\phi)\in \CU_b^+}\!\!\! \EE\Big[L_{T,1}(u)+\frac{1}{\e h^2(\e)}L_{T,2}(\phi)+F\circ \CG^\e(\sqe W\cd+\int_{0}^{\cdot} h(\e)u^\e(s)ds, \e N^{\e^{-1}\phi})\Big].
\earray\eeq
where $u:=v/h(\e)$. For the remainder of the paper, we will use control variable $u$ in place of $v$. Using the control pair $(u^\e,\phi^\e)\in \CU_b^+$, we consider the controlled processes:
\beq{c-sde}\left\{\barray
\!\!\!\! dX^{\e,u^\e,\phi^\e}(t)\!\!\!\!\! \ad = b(X^{\e,u^\e,\phi^\e}(t), Y^{\e,u^\e,\phi^\e}(t))dt+\sqrt{\e} \sg(X^{\e,u^\e,\phi^\e}(t), Y^{\e,u^\e,\phi^\e}(t))dW(t)\\
\aad \quad+\sqrt{\e} h(\e) \sg( X^{\e,u^\e,\phi^\e}(t), Y^{\e,u^\e,\phi^\e}(t))u^\e(t)dt \\
\!\!\!\! d Y^{\e,u^\e,\phi^\e}(t)\!\!\!\!\!\! \ad =\!\!\!\! \sum_{(i,j)\in \TT}\! \int_{[0,\zeta]}\!\! (j-i) \indi_{\{Y^{\e,u^\e,\phi^\e}(t-)=i\}} \indi_{E_{ij}( X^{\e,u^\e,\phi^\e}(t))}(z) N_{ij}^{\e^{-1}\phi_{ij}^\e}(dz\times dt),
\earray
\right.
\eeq
where $X^{\e,u^\e,\phi^\e}(0) =x_0, Y^{\e,u^\e,\phi^\e}(0)=y_0$. Define 
\beq{def-etae} 
\eta^{\e,u^\e,\phi^\e}(t):=\frac{X^{\e,u^\e,\phi^\e}(t)-\bar X(t)}{\sqe h(\e)},\quad t\in [0,T].
\eeq
From the pathwise solvability of \eqref{c-sde} and Girsanov's theorem, we have $ 
X^{\e,u^\e,\phi^\e}=\bar \CG^\e(\sqe W\cd+ \int_0^{\cdot}h(\e) u^\e(s)ds, \{\e N_{ij}^{\e^{-1}\phi_{ij}^\e}\} )$ 
is the unique solution of \eqref{c-sde}. Hence $
\etae\\
=\CG^\e(\sqe W\cd+\int_0^{\cdot} h(\e) u^\e(s)ds, \{\e N_{ij}^{\e^{-1}\phi_{ij}^\e}\} )$. Therefore, the variational  representation \eqref{var-rep} yields
\beq{var-rep1}
-\frac{1}{h^2(\e)}\log\EE\big[\exp(-h^2(\e)F(\eta^\e))\big]\!\! = \!\!\!\! \!\! \inf_{(u^\e,\phi^\e)\in \CU_b^+}\!\!\!\!\! \EE\big[L_{T,1}(u^\e)+\frac{1}{\e h^2(\e)}L_{T,2}(\phi^\e)+F(\eta^{\e,u^\e,\phi^\e}))\big].
\eeq

Letting $\e\to 0$, from \eqref{var-rep1}, we need to study the asymptotic behavior of $\eta^{\e,u^\e,\phi^\e}$ by proving its tightness and characterizing its limiting behavior. To do this, based on the definition of \eqref{def-etae}, we can decompose $\etae$ as follows:
\beq{b-eta}\barray\ad 
\!\!\!\!\!\!\!\!\eta^{\e,u^\e,\phi^\e}(t) =\frac{X^{\e,u^\e,\phi^\e}(t)-\bar X(t)}{\sqrt{\e}h(\e)} \\
\ad\!\!\!\!\!\!\!\! =\int_0^t \frac{1}{\sqrt{\e}h(\e)}\big[b( X^{\e,u^\e,\phi^\e}(s),  Y^{\e,u^\e,\phi^\e}(s))-\bar b(\bar X(s))\big]ds
\\
\aad\!\!\!\!\!\!\!\!+\int_0^t \sg( X^{\e, u^\e, \phi^\e}(s), Y^{\e, u^\e, \phi^\e}(s))u^\e(s) ds + \frac{1}{h(\e)}\int_0^t \sg( X^{\e, u^\e, \phi^\e}(s), Y^{\e, u^\e, \phi^\e}(s)) dW(s) \\
\ad \!\!\!\!\!\!\!\!= \int_0^t \frac{1}{\sqrt{\e}h(\e)}\big[b(X^{\e,u^\e,\phi^\e}(s), Y^{\e, u^\e,\phi^\e}(s))-\bar b(X^{\e,u^\e,\phi^\e}(s))\big]ds \\
\aad \!\!\!\!\!\!\!\!+ \int_0^t \frac{1}{\sqrt{\e}h(\e)}\big[\bar b(X^{\e,u^\e,\phi^\e}(s))-\bar b(\bar X(s))\big]ds \\
\aad\!\!\!\!\!\!\!\!+ \int_0^t \sg(X^{\e,u^\e,\phi^\e}(s),Y^{\e,u^\e,\phi^\e}(s)) u^\e(s)ds+ \frac{1}{h(\e)} \int_0^t \sg(X^{\e,u^\e,\phi^\e}(s), Y^{\e,u^\e,\phi^\e}(s))dW(s)\\
\ad \!\!\!\!\!\!\!\!=: \sum_{k=1}^4 \eta_k^{\e,u^\e,\phi^\e}(t).
\earray\eeq
We note that the only term above causing trouble to prove tightness is $\etae_1$  whose coefficients $1/\sqe h(\e)$ diverge as $\e\to 0$. For the singular coefficient in $\eta_2^{\e,u^\e,\eta^\e}$, we note that the mean value theorem implies $
(\bar b(X^{\e, u^\e, \phi^\e}(s))-\bar b(\bar X(s)))/\sqe h(\e) = \nabla \bar b(X^{+,\e}(s)) \eta^{\e,u^\e, \phi^\e}(s)$ hold for some $\bar X^{+,\e}(s) = a X^{\e, u^\e, \phi^\e}(s)+(1-a) \bar X(s)$ with $a \in [0,1]$. Thus, the singular coefficient $1/\sqe h(\e)$ in $\eta_2^{\e,u^\e,\eta^\e}$ is effectively absorbed into $\eta^{\e, u^\e, \phi^\e}(s)$, ensuring the application of Gronwall's inequality. This term is harmless. In \secref{sec:lim}, we will first address the singular term $\eta_1^{\e, u^\e, \phi^\e}$ using the Poisson equation discussed in \cite{SX25}. Specifically, we let $\Phi(x,y)$ satisfy the following Poisson equation on finite state space $\LL$:
\beq{pq}
Q(x)\Phi(x,\cdot)(i)=-\big[b(x,i)-\bar b(x)\big] \; \text{ with } \sum_{i=1}^{|\LL|} \Phi(x,i)\mu_i^x=0,
\eeq
where $Q(x)$ is the generator of a Markov chain $\{Y^{x,i}\}$ with initial value $Y^{x,i}(0)=i$, and $\{\mu_i^x\}_{i\in \LL}$ is the unique invariant measure of $Y^{x,i}$. Define $\wdt b(x,i): =b(x,i)-\bar b(x)$, then it satisfies so-called ``central condition'':
\bea
\sum_{i\in \LL} \wdt b(x,i)\mu_i^x=0, \quad \forall x\in \rr^d.
\eea 

Under our assumptions, the Poisson equation \eqref{pq} has a unique solution $\Phi$ which is bounded and Lipschitz continuous; see \cite{PTW12} for details. Due to the lack of regularity of $\Phi$ under (H2) and (H3), we approximate it by $\Phi_m$ in \lemref{lem:app}, enabling us to apply It\^{o} formula. This leads to establishing the uniform boundedness and tightness for $\eta_1^{\e,u^\e,\phi^\e}$ in \lemref{lem:eta1}. We then prove the tightness of $\eta^{\e,u^\e,\phi^\e}$ and characterize its limit in \propref{prop:tight}.

\subsection{Main results}
Recall the definition of $\Ga_{ij}^\psi$ in \eqref{Ga}. We note that for any $\psi=\{\psi_j\}_{j\in \LL}$ with $\psi_j: [0,\zeta] \to \rr_+$ which is  measurable and integrable for each $j\in \LL$, it can be identified with $\theta=(\theta_j)_{j\in \LL}$, where $\theta_j:=\psi_j(z)dz$ takes values in $\CM_F([0,\zeta])$ for each $j\in \LL$. We define $\Ga^\theta(x)$ as follows:
\beq{Ga-th}
\Ga_{ij}^\theta(x)=\left\{\barray
\theta_j(E_{ij}(x)),\ad  \quad i \neq j,\\
-\sum_{y:y\neq j} \theta_y(E_{jy}(x)),\ad \quad i = j.
\earray
\right.
\eeq
In the above, we encode  $\psi$ as measure $\theta$. This measure formulation is more convenient when discussing topologies. 

We now in the position to state our main results.
For $\eta\in C([0,T];\rr^d)$, we define:
\beq{rate}
I(\eta)=\!\!\!\!\!\! \inf_{(u,\psi,\pi)\in \CV(\eta)} \!\! \bigg\{\sum_{i\in \LL}\half \int_0^T\!\!\! \|u_i(s)\|^2 \pi_i(s)ds +\!\!\!\! \sum_{(i,j)\in \TT} \!\!\half\int_0^T \!\!\! \int_0^\zeta |\psi_{ij}(s,z)|^2 \pi_i(s) \la_\zeta(dz)ds \bigg\},
\eeq
where $\CV(\eta)$ is the collection of all $
\{u=(u_i),  \psi=(\psi_{ij}), \pi=(\pi_i)\} \in (\MM([0,T];\rr^d))^{|\LL|} \times (L^2([0,T]\times [0,\zeta]))^{|\TT|} \times \MM([0,T];\CP(\LL))$ 
such that for each $i\in \LL$ and each $(i,j)\in \TT$, $
\int_0^T \|u_i(s)\|^2 \pi_i(s)ds < \infty$ and $\int_{[0,T]\times [0,\zeta]} |\psi_{ij}(s,z)|^2 \pi_i(s)\la_\zeta(dz)ds  <\infty$ and for each $i$,
\beq{eta-def}\barray
\eta(t)\ad\!\!\!\! =  \int_0^t \nabla \bar b(\bar X(s))\eta(s) ds + \sum_{j\in \LL} \int_0^t \sg_j(\bar X(s))u_j(s) \pi_j(s)ds  \\
\aad\!\!\!\!\!\!+ \sum_{(i,j)\in \TT} \int_{[0,t]\times [0,\zeta]} \big[\Phi(\bar X(s),j)-\Phi(\bar X(s),i)\big] \pi_i(s) \indi_{E_{ij}(\bar X(s))}(z) \psi_{ij} (s,z) \la_\zeta(dz) ds,
\earray
\eeq
and for a.e. $s\in [0,T]$ and $j\in \LL$,
\beq{fast-cond}\barray
\sum_{i\in \LL} \pi_i(s)\Ga_{ij}(\bar X(s))=0.
\earray\eeq
Here, $\Ga_{ij}$ denotes $\Ga_{ij}^{\psi}$ in \eqref{Ga} with $\psi=1$.

We say a function $I:C([0,T];\rr^d) \to [0,\infty]$ is a good rate function if it has a compact level sets on $C([0,T];\rr^d)$, that is, for every   $\al>0$, the set $\{\eta\in C([0,T];\rr^d): I(\eta)\leq \al\}$ is a compact subset of $C([0,T];\rr^d)$. The following is our main result. 

\begin{thm}\label{thm:main}
Suppose (H1)-(H3) hold.
The function defined in \eqref{rate} is a good rate function on $C([0,T]; \rr^d)$. The $\{\eta^\e\}_{\e>0}$ satisfies the Laplace principle on $C([0,T];\rr^d)$ with speed $h^{-2}(\e)$ and  rate function $I$ defined in \eqref{rate}. That is, for all bounded and continuous function $F: C([0,T];\rr^d)\to \rr$, we have
\beq{lap}
\lim_{\e\to 0} - \frac{1}{h^2(\e)} \log\EE\Big[\exp(-h^2(\e) F(\eta^\e))\Big]= \inf_{\eta\in C([0,T];\rr^d)}\big\{F(\eta)+ I(\eta)\big\}.
\eeq
Due to the equivalence between the Laplace principle and large deviation principles, we have $\{X^\e\}$ satisfies a MDP with speed $h^{-2}(\e)$ and rate function $I$.
\end{thm}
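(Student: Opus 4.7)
The proof proceeds via the weak convergence method, using the variational representation \eqref{var-rep1} to rewrite the logarithmic moment generating functional as an infimum over controlled processes. Since the Laplace principle is equivalent to a large deviation principle on the Polish space $C([0,T];\rr^d)$, it suffices to verify \eqref{lap}. My plan is to split the argument into three pieces: (i) goodness of $I$; (ii) the Laplace upper bound; and (iii) the Laplace lower bound. Parts (ii)--(iii) both rest on the tightness and limit identification for the controlled process $\etae$, together with the Poisson equation representation of the singular term $\eta_1^{\e,u^\e,\phi^\e}$ in the decomposition \eqref{b-eta}.

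For \textbf{goodness of $I$}, I would take any sequence $\{\eta^n\}$ with $I(\eta^n)\le \alpha$ and select near-optimal admissible triples $(u^n,\psi^n,\pi^n)\in \CV(\eta^n)$. The quadratic cost in \eqref{rate} furnishes a uniform $L^2$-type bound on $u^n$ and $\psi^n$ weighted by $\pi^n$, while $\pi^n\in \MM([0,T];\CP(\LL))$ is automatically tight since $\CP(\LL)$ is compact. Passing to a subsequential weak limit and then to the limit in the state equation \eqref{eta-def} and the invariance constraint \eqref{fast-cond}, one obtains a limit triple admissible for some $\eta^*$. Uniform boundedness of $b,\sigma,\Phi$ from (H1)--(H3) plus Arzel\`a--Ascoli yield $\eta^n\to \eta^*$ in $C([0,T];\rr^d)$, and lower semicontinuity of the quadratic cost delivers $I(\eta^*)\le \alpha$, giving compactness of the level sets.

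For the \textbf{upper bound}, fix bounded continuous $F$ and choose $(u^\e,\phi^\e)\in \CU_b^+$ attaining the right-hand side of \eqref{var-rep1} up to error $\e$. Boundedness of $F$ forces $L_{T,1}(u^\e)+(\e h^2(\e))^{-1}L_{T,2}(\phi^\e)\le M$ a.s., so $\psi^\e:=(\phi^\e-1)/\sqe h(\e)$ has controlled energy. The crucial step is tightness of the tuple $(\etae,u^\e,\psi^\e,\pi^\e)$, where $\pi^\e$ denotes the occupation measure of $\Ye$. The singular term $\eta_1^{\e,u^\e,\phi^\e}$ is tamed via the Poisson equation \eqref{pq}: applying It\^o's formula to $\sqe h(\e)\,\Phi_m(\Xe(t),\Ye(t))$ for a smooth approximant $\Phi_m$ of the Lipschitz solution $\Phi$, the $Q(\Xe)\Phi_m$ piece cancels the singular drift $-\wdt b(\Xe,\Ye)/(\sqe h(\e))$ modulo vanishing errors, leaving bounded drifts and a small martingale. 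Since only an Orlicz-type bound on $\psi^\e$ holds a priori, tightness is obtained by the splitting $\psi^\e=\psi^{\e,1}+\psi^{\e,2}$ from \cite{BDG16}: $\psi^{\e,1}$ lies in a fixed $L^2$-ball (hence has a weak limit), while $\psi^{\e,2}$ contributes negligibly to the state equation. Along any convergent subsequence, passing to the limit in \eqref{c-sde} together with the ergodic average for the fast component yields an admissible quadruple for $\CV(\eta^*)$; Fatou plus cost lower semicontinuity then give $\liminf_\e \ge \EE[F(\eta^*)+I(\eta^*)]\ge \inf_\eta\{F(\eta)+I(\eta)\}$.

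For the \textbf{lower bound}, given $\eta$ with $I(\eta)<\infty$ and $\delta>0$, take a near-minimizer $(u,\psi,\pi)\in \CV(\eta)$ (which by mollification may be assumed bounded). I would then construct $\phi_{ij}^\e(s,z):=1+\sqe h(\e)\psi_{ij}(s,z)$ (truncated to $[n^{-1},n]$ for admissibility in $\CU_b^+$) and build $u^\e$ from $\{u_i\}$ evaluated along the realized fast state $\Ye$. Using $\ell(1+x)=x^2/2+o(x^2)$, the costs satisfy $L_{T,1}(u^\e)\to \half\sum_i\int_0^T\|u_i\|^2\pi_i\,ds$ and $(\e h^2(\e))^{-1}L_{T,2}(\phi^\e)\to \half\sum_{(i,j)\in \TT}\int_0^T\int_0^\zeta|\psi_{ij}|^2\pi_i\,\la_\zeta(dz)\,ds$, while the occupation measure of $\Ye$ converges to $\pi$ by \eqref{fast-cond}, so the same machinery gives $\etae\to \eta$ in $C([0,T];\rr^d)$. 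Inserting into \eqref{var-rep1} produces $\limsup_\e(\text{LHS})\le F(\eta)+I(\eta)+\delta$; taking infimum over $\eta$ and $\delta\to 0$ closes the argument. The \textbf{main obstacle} is the singular term combined with the limited regularity of $\Phi$, which forces the $\Phi_m$-approximation with careful error tracking, and the failure of $\{\psi^\e\}$ to be uniformly $L^2$-bounded, which forces the splitting device of \cite{BDG16}; degeneracy of $\sigma$ precludes any hypoelliptic shortcut, so all fast-scale estimates must be purely variational.
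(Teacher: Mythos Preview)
Your overall strategy matches the paper's: weak convergence via the variational representation, the Poisson-equation rewrite of the singular drift using the mollified $\Phi_m$, and the \cite{BDG16} splitting of $\psi^\e$. Two points deserve correction or comment.

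First, in the upper bound you assert that boundedness of $F$ forces $L_{T,1}(u^\e)+(\e h^2(\e))^{-1}L_{T,2}(\phi^\e)\le M$ \emph{a.s.}; this is false. Near-optimality of $(u^\e,\phi^\e)$ in \eqref{var-rep1} only gives a bound in \emph{expectation}, namely $\EE[L_{T,1}(u^\e)+(\e h^2(\e))^{-1}L_{T,2}(\phi^\e)]\le 2\|F\|_\infty+1$. The tightness machinery (\lemref{lem:eta1}, \lemref{lem:tight}, \propref{prop:tight}) requires an a.s.\ bound on the costs. The paper fixes this by a standard stopping-time truncation: define $\tau^\e$ as the first time the running Poisson cost exceeds a large threshold, replace $\wdt\phi^\e$ by $\phi^\e:=\wdt\phi^\e\indi_{[0,\tau^\e]}+\indi_{(\tau^\e,T]}$, and use Markov's inequality to show $\PP(\phi^\e\ne\wdt\phi^\e)$ is small so that $F(\eta^{\e,\wdt u^\e,\wdt\phi^\e})$ and $F(\eta^{\e,\wdt u^\e,\phi^\e})$ differ by at most a prescribed $\delta$ in expectation (see the proof of \thmref{thm:up}). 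Without this step your tightness argument has no almost-sure input to work with.

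Second, the paper does not work directly with the triples $(u,\psi,\pi)$ as you do. Instead it introduces an equivalent rate function $\wdh I$ via measures $\Pi$ on $\HH_T=[0,T]\times\LL\times(\CM_F[0,\zeta])^{|\LL|}\times\rr^d$ (\propref{prop:hat-eta}), and encodes the controlled dynamics in the empirical measure $\Pi^\e$ of \eqref{Pie}. This reformulation is not merely cosmetic: in your goodness argument the ``weighted $L^2$'' spaces carrying $u^n,\psi^n$ change with $n$ (since the weight $\pi^n$ does), so extracting weak limits directly is awkward; bundling everything into a single $\Pi$ turns the problem into ordinary weak convergence of finite measures, for which tightness is just the moment bound \eqref{Pi-n}. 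Your direct approach can be made to work, but the $\Pi$-formulation is what makes the compactness and limit-identification arguments in \propref{prop:comp-le} and \propref{prop:tight} clean, and it is also what allows the cost comparison in the upper bound to be written as a single Fatou step on $\int_{\HH_T}\half[\|z\|^2+\wdh d(\theta)]\,\Pi^\e(d\bv)$.
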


We would extend the above main result to allow for unbounded drift coefficient $b(\cdot,i)$ for each $i\in \LL$, thereby removing the boundedness condition on $b(\cdot,i)$ in (H1). We assume

\smallskip

\noindent (H1)$'$  For each $i\in \LL$, the function $b(\cdot,i)$ is continuously differentiable with bounded derivatives. Moreover, the Lipschitz conditions \eqref{lip-cond} hold.

\smallskip

\begin{cor}\label{cor:extend}
Under conditions (H1)$'$, (H2) and (H3), the process $\{X^\e\}_{\e>0}$ satisfies the moderate deviation principle on $C([0,T];\rr^d)$ with speed $h^{-2}(\e)$ and rate function $I$ given by \eqref{rate}. 
\end{cor}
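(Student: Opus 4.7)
The plan is a truncation-plus-exponential-equivalence argument that reduces Corollary~\ref{cor:extend} to Theorem~\ref{thm:main}. The key observation is that the rate function $I$ in \eqref{rate} depends on the drift $b$ only through quantities evaluated along the bounded averaged trajectory $\bar X$; so replacing $b$ by a bounded modification that agrees with $b$ on a large ball around $\bar X([0,T])$ changes neither $\bar X$ nor $I$, and the unbounded tails of $b$ intervene only through the probability that $X^\e$ exits that ball, which is super-exponentially small at the moderate-deviation speed $h^{-2}(\e)$.

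First, under (H1)$'$ the averaged drift $\bar b$ is Lipschitz, so $\bar X$ is well defined on $[0,T]$ with $M_0:=\sup_{t\in[0,T]}\|\bar X(t)\|<\infty$. Fix $R>M_0+1$ and a smooth cutoff $\chi_R\in C_b^\infty(\rr^d;[0,1])$ with $\chi_R\equiv 1$ on $B(0,R)$ and $\chi_R\equiv 0$ off $B(0,R+1)$. Define $b^R(x,i):=\chi_R(x)b(x,i)$; then $b^R(\cdot,i)$ is bounded, continuously differentiable with bounded derivative, and Lipschitz with constant depending only on $L_0$, so (H1)--(H3) all hold for the truncated system. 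Let $X^{\e,R}$ denote its slow component and $\eta^{\e,R}:=(X^{\e,R}-\bar X)/\sqe h(\e)$. Since $\bar X([0,T])\subset B(0,M_0)\subset B(0,R)$ and $b^R\equiv b$ on $B(0,R)$, the averaged ODE for $b^R$ coincides with \eqref{bar-X}; moreover the Poisson equation \eqref{pq} for $b^R$ at each $x\in B(0,R)$ has the same right-hand side and normalization as that for $b$, so $\Phi^R(x,\cdot)\equiv\Phi(x,\cdot)$ there. Consequently each of $\nabla\bar b(\bar X(s))$, $\sg_j(\bar X(s))$, $\Phi(\bar X(s),\cdot)$, $\Ga_{ij}(\bar X(s))$ entering \eqref{rate} is unaffected by the truncation, and Theorem~\ref{thm:main} applied to the truncated system yields the Laplace principle for $\{\eta^{\e,R}\}_{\e>0}$ with speed $h^{-2}(\e)$ and rate function $I$.

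To transfer this Laplace principle to $\eta^\e$, by the standard exponential-equivalence criterion for Laplace principles (see e.g.\ \cite[Theorem 1.3.3]{DE97}) it suffices to prove
\beq{exp-tail}
\lim_{\e\to 0}\frac{1}{h^2(\e)}\log\PP\Big(\sup_{t\in[0,T]}\|X^\e(t)\|\ge R\Big)=-\infty,
\eeq
since on the complementary event pathwise uniqueness of \eqref{re-sde} forces $X^\e=X^{\e,R}$ and hence $\eta^\e=\eta^{\e,R}$. The bound \eqref{exp-tail} is of large-deviation type at speed $\e^{-1}$, which dominates $h^{-2}(\e)$ because $\e h^2(\e)\to 0$ by \eqref{dev-h}. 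It can be derived from a Bernstein/exponential-martingale argument applied to the Lyapunov function $V(x)=\|x\|^2$: localizing on $\tau_R^\e:=\inf\{t:\|X^\e(t)\|\ge R\}\wedge T$, applying It\^{o}'s formula to $\exp(\alpha V(X^\e(t\wedge\tau_R^\e))/\e)$ for a sufficiently small $\alpha>0$, and using that the jumps of $Y^\e$ leave $V(X^\e)$ unchanged, one obtains constants $c,R_0>0$ with $\PP(\sup_t\|X^\e(t)\|\ge R)\le 2\exp(-cR^2/\e)$ for all $R\ge R_0$ and $\e$ small; since $cR^2/(\e h^2(\e))\to\infty$, \eqref{exp-tail} follows.

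The main obstacle is precisely this Bernstein-type estimate: because $\sg$ is only Lipschitz (hence possibly of linear growth) and $b$ is no longer bounded, the usual Novikov condition for the exponential martingale is not immediate, and the argument must be carried out on the localized interval $[0,\tau_R^\e]$ with $\alpha$ chosen small enough that the quadratic-variation correction term is absorbed by the Lyapunov drift. Once \eqref{exp-tail} is in place, combining it with the truncated Laplace principle from the second paragraph completes the proof of Corollary~\ref{cor:extend}.
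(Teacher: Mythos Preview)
Your overall architecture---truncate $b$ outside a ball containing $\bar X([0,T])$, apply \thmref{thm:main} to the truncated system, and transfer the result to $\eta^\e$ by exponential equivalence---is exactly the paper's strategy in \secref{sec:ext}. Your observation that a \emph{single} truncation radius $R>M_0+1$ suffices, because every ingredient of $I$ in \eqref{rate} is evaluated only along $\bar X$ and hence is unchanged by the truncation, is correct and is in fact cleaner than what the paper does: the paper lets $B\to\infty$, invokes the exponential-approximation machinery of \cite[Theorem~4.2.16]{DZ09}, obtains an a~priori rate function $\wdt I=\sup_{\dl>0}\liminf_{B\to\infty}\inf_{g\in B_{\eta,\dl}}I_B(g)$, and then spends \lemref{lem:wdt-I} and \lemref{lem:full-LDP} identifying $\wdt I$ with $I$ and upgrading to a full LDP. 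Your route bypasses all of that.

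The one genuine gap is your derivation of the tail bound \eqref{exp-tail}. The proposed Bernstein argument---apply It\^o to $\exp(\alpha\|X^\e\|^2/\e)$ on $[0,\tau_R^\e]$---does not work as stated: under (H1)$'$ the drift satisfies only $|b(x,i)|\le C(1+|x|)$, so $x\cdot b(x,i)$ can be of order $+|x|^2$, and likewise $|x^\top\sg(x,i)|^2$ can be of order $|x|^4$; the resulting drift of $\exp(\alpha\|X^\e\|^2/\e)$ is then positive and unbounded, and no choice of small $\alpha$ rescues the supermartingale property. The claimed estimate $\PP(\sup_t\|X^\e\|\ge R)\le 2\exp(-cR^2/\e)$ is therefore unsupported (and in fact too strong under mere Lipschitz growth). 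The paper obtains \eqref{exp-tail} differently (\lemref{lem:Xe-A1}): it invokes the full LDP for $X^\e$ at speed $\e^{-1}$ already proved in \cite{BDG18} under (H1)$'$, (H2), (H3); goodness of the rate function $J$ gives a radius $A_1$ with $\inf_{\|\phi\|_T\ge A_1}J(\phi)\ge 2M$, hence $\e\log\PP(\|X^\e\|_T\ge A_1)\le -M$ for small $\e$, and dividing by $\e h^2(\e)\to 0$ yields \eqref{exp-tail}. If you want to avoid citing the unbounded-drift LDP of \cite{BDG18}, an alternative is to note that $\{\sup_t\|X^\e\|\ge R\}=\{\sup_t\|X^{\e,R}\|\ge R\}$ by pathwise uniqueness up to the exit time, and then apply the LDP for the \emph{truncated} process $X^{\e,R}$ (which has bounded drift and so falls under the easier bounded-coefficient case); since $J^R$ is good with unique zero $\bar X$ and $R>\|\bar X\|_T$, the same level-set argument gives the bound.
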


For the rest of the paper, we aim to prove \eqref{lap}. To this purpose, it is sufficient to prove the Laplace principle lower bound
\beq{lap-up}
\limsup_{\e\to 0}  - \frac{1}{h^2(\e)} \log\EE\big[\exp(-h^2(\e) F(\eta^\e))\big]\geq  \inf_{\eta\in C([0,T];\rr^d)} \{F(\eta)+I(\eta)\},
\eeq
and the Laplace principle upper bound
\beq{lap-lower}
\liminf_{\e\to 0} - \frac{1}{h^2(\e)} \log\EE\big[\exp(-h^2(\e) F(\eta^\e))\big]\leq \inf_{\eta\in C([0,T];\rr^d)} \{F(\eta)+I(\eta)\}.
\eeq
The proof of \eqref{lap-up} is in \secref{sec:up} and the proof of \eqref{lap-lower} is given in \secref{sec:low}. The proof of \corref{cor:extend} is discussed in \secref{sec:ext}.

\section{The equivalent form of  the rate function and its goodness} \label{sec:rate}
In this section, we first represent an equivalent formulation of the rate function defined in \eqref{rate}. We then demonstrate the goodness of the rate function by establishing the compactness of the level set 
 $\{\eta\in C([0,T];\rr^d): I(\eta)\leq \al\}$ for $\al>0$. This alternative equivalent form proves to be more convenient to work with. Our approach follow the framework outlined in \cite[Section 2.2, 2.3]{BDG18}, but for completeness and reader's convenience, we provide detailed explanations.

\subsection{An equivalent representation of the rate function}
Recall $\la_\zeta$ is Lebesgue measure on $[0,\zeta]$. Let  $\wdh d: \CM_F([0,\zeta])\to [0,\infty]$ be defined by 
\beq{d-hat}
\wdh d (\theta):= \left\{\barray\disp
\int_{[0,\zeta]} \Big|\frac{d\theta(z)}{d\la_\zeta(z)}\Big|^2 \la_\zeta(dz), \ad \quad \text{if } \theta \ll \la_\zeta, \\
\infty, \ad \quad \text{otherwise}.
\earray\right. 
\eeq
For $\theta=(\theta_i)_{i\in \LL}$ with $\theta_i\in \CM_F([0,\zeta])$, with certain abuse of notation, we define $
\wdh d(\theta):= \sum_{i\in \LL} \wdh d(\theta_i)$. 
For $0\leq a < b \leq T$, denote by $\HH_{[a,b]}$ the space 
\bea
\HH_{[a,b]}:=[a,b]\times \LL \times (\CM_F[0,\zeta])^{\LL} \times \rr^d.
\eea
When $[a,b]=[0,t]$, we denote $\HH_{[0,t]}$ as $\HH_t$. Let $\CP_{\text{leb}}(\HH_T)$ be the space of finite measures $\Pi$ on $\HH_T$ such that for all $0\leq a \leq b \leq T$, $
\Pi(\HH_{[a,b]})=b-a$. This says that the measure $\Pi\in \CM_F(\HH_T)$ is in $\CP_{\text{leb}}(\HH_T)$ if and only if the first marginal of $\Pi$ is the Lebesgue measure $\la_T$ on $[0,T]$, i.e., $[\Pi]_1=\la_T$. Here, $[\Pi]_i$ is the marginal on the $i$-th coordinate of $\HH_T$ for $\Pi\in \CM_F(\HH_T)$. For notation simplicity, we  refer to a typical element $(s,y,\theta,z)\in \HH_T$ as $\bv$. Then the measure $\Pi$  captures the time $s$, the state of controlled fast processes $y$, the measures that governs the jump rates $\theta$, and the control variable $z$ applied to perturb the mean of the Brownian motion. For $\eta\in C([0,T];\rr^d)$, we define $\CP_s(\eta)$ as the family of all $\Pi\in \CP_{\text{leb}}(\HH_T)$ such that $
\int_{\HH_T} \|z\|^2 \Pi(d\bv) <\infty$ and $\int_{\HH_T} \wdh d(\theta)\Pi(d\bv) <\infty$,  
and 
\beq{eta-pi}\barray
\eta(t)\ad = \int_{\HH_t} \nabla \bar b(\bar X(s)) \eta(s) \Pi(d\bv)+\int_{\HH_t} \sg(
\bar X(s),y)z \Pi(d\bv) \\
\aad \quad + \int_{\HH_t}\sum_{j\in \LL} [\Phi(\bar X(s),j)-\Phi(\bar X(s),y)]\Ga_{yj}^\theta(\bar X(s))
\Pi(d\bv),
\earray\eeq
and for all $j\in \LL$ and a.e. $t\in [0,T]$,
\beq{fast-pi}
\int_{\HH_t} \Ga_{yj}(\bar X(s))\Pi(d\bv)=0 .
\eeq

Equation \eqref{eta-pi} describes the limiting  controlled dynamics, while \eqref{fast-pi} ensures that the conditional distribution of $\Pi$ with respect to the $y$-variable corresponds to the stationary distribution associated with the generator $\Ga^\theta(\bar X(s))$, given the time instant $s\in [0,T]$, the state $\bar X(s)$ of the averaged dynamics, and the rate control measure $\theta$. We now define the function $\wdh I: C([0,T];\rr^d)\to [0,\infty]$ as follows:
\beq{wdh-I} 
\wdh I(\eta)=\inf_{\Pi\in \CP_{s}(\eta)}\Big\{\int_{\HH_T}\half \big[\|z\|^2 + \wdh d(\theta)\big] \Pi(d\bv )\Big\}.
\eeq
Before proving the equivalence of $I$ and $\wdh I$, we recall regularity results for the solution $\Phi$ of Poisson equation \eqref{pq} in \cite[Theorem 2.2]{SX25} for a countable-state Markov chain.

\begin{thm}\label{thm:Phi}
Suppose conditions (H2)-(H3) hold with $|\LL|=\infty$ and $\|\wdt b(x,\cdot)\|_\infty <\infty$ for all $x\in \rr^d$, then equation \eqref{pq} admits a unique solution $
\Phi(x,i)=\int_0^\infty \EE \big[b(x,Y^{x,i}(t))-\bar b(x)\big] dt$ satisfying
\beq{Phi-bdd}
\|\Phi(x,\cdot)\|_\infty:=\sup_{i\in \LL}\|\Phi(x,i)\| \leq K\|\wdt b(x,\cdot)\|_\infty,
\eeq
where $\{Y^{x,i}\}_{t\geq 0}$ is the unique $\LL$-valued Markov chain generated by generator $Q(x)$ with initial value $Y_0^{x,i}=i$. Moreover, if $Q\in C^1(\rr^d,\rr^{|\LL|\times |\LL|})$ and $\wdt b\in C^1(\rr^d\times \LL; \rr^d)$, then there exists a constant $K$ such that for any $x\in \rr^d$,
\bea
\|\partial_x \Phi(x,\cdot)\|_\infty \leq K \big(\|\wdt b(x,\cdot)\|_\infty \|\nabla Q(x)\|+\|\partial_x \wdt b(x,\cdot)\|_\infty\big).
\eea
If $Q\in C^2(\rr^d, \rr^{|\LL|\times |\LL|})$ and $\wdt b\in C^2(\rr^d\times \LL, \rr^d)$, then there exists a $K>0$ such that for any $x\in\rr^d$,
\bea
\|\partial_x^2 \Phi(x,\cdot)\|_\infty
\ad  \leq K \Big[\|\wdt b(x,\cdot)\|_\infty \big(\|\nabla Q(x)\|+ \|\nabla Q(x)\|^2 + \|\nabla^2 Q(x)\|\big) \\
\aad \qquad \quad  +\|\partial_x \wdt b(x,\cdot)\|_\infty \|\nabla Q(x)\| + \|\partial_x^2 \wdt b(x,\cdot)\|_\infty \Big]. 
\eea
\end{thm}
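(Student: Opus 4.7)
The plan is to construct $\Phi$ explicitly via the probabilistic formula in the statement and then verify each claim by combining the uniform exponential ergodicity in (H2) with the central condition $\langle\mu^x,\wdt b(x,\cdot)\rangle=0$. First I would rewrite
\begin{equation*}
\Phi(x,i)=\int_0^\infty\sum_{j\in\LL}\bigl(P_t^x(i,j)-\mu_j^x\bigr)\wdt b(x,j)\,dt,
\end{equation*}
which is legitimate because subtracting $\langle\mu^x,\wdt b(x,\cdot)\rangle=0$ does not change the value; the uniform total-variation bound from (H2) together with $\|\wdt b(x,\cdot)\|_\infty<\infty$ then gives $\|\Phi(x,\cdot)\|_\infty\leq K\varrho^{-1}\|\wdt b(x,\cdot)\|_\infty$, which is exactly \eqref{Phi-bdd}. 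To check that this $\Phi$ solves \eqref{pq}, I would interchange $Q(x)$ with the time integral (allowed since $\bar\kappa_Q<\infty$ makes $Q(x)$ a bounded operator on $\ell^\infty(\LL)$) and apply the Kolmogorov backward equation $\frac{d}{dt}P_t^x=Q(x)P_t^x$, obtaining $Q(x)\Phi(x,\cdot)(i)=\lim_{t\to\infty}P_t^x\wdt b(x,\cdot)(i)-\wdt b(x,i)=-\wdt b(x,i)$, the last equality again using ergodicity and the central condition. The normalization $\sum_i\mu_i^x\Phi(x,i)=0$ is a Fubini-plus-invariance calculation using $\mu^xP_t^x=\mu^x$, and uniqueness in the class with this centering follows from irreducibility of $Q(x)$.

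For the first-derivative bound, I would differentiate the representation using the variation-of-constants identity
\begin{equation*}
\partial_{x_k}P_t^x=\int_0^t P_s^x\bigl(\partial_{x_k}Q(x)\bigr)P_{t-s}^x\,ds,
\end{equation*}
together with $\partial_{x_k}(P_t^x\wdt b(x,\cdot))=(\partial_{x_k}P_t^x)\wdt b(x,\cdot)+P_t^x\partial_{x_k}\wdt b(x,\cdot)$. After integrating in $t$, the second piece produces a contribution bounded by $K\varrho^{-1}\|\partial_x\wdt b(x,\cdot)\|_\infty$, while the first piece, once centered appropriately (see next paragraph), produces a contribution bounded by $K\varrho^{-2}\|\nabla Q(x)\|\,\|\wdt b(x,\cdot)\|_\infty$; summing gives the stated first-derivative bound. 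For the second-derivative bound I would apply the variation-of-constants identity once more and group the resulting terms by their Leibniz type: one finds $\partial_x^2 Q$ applied to $\wdt b$, a product $\partial_x Q\cdot\partial_x Q$ applied to $\wdt b$, a cross term $\partial_x Q\cdot\partial_x\wdt b$, and a term involving $\partial_x^2\wdt b$ alone, which are precisely the four combinations that appear in the statement.

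The main obstacle, which I would treat with care, is the convergence of the time integrals after differentiation. Naively, each factor $P_s^x$ is not integrable in $s$ and $\int_0^\infty\int_0^t(\cdots)\,ds\,dt$ diverges. The fix is to insert centerings everywhere a semigroup factor appears: where $P_{t-s}^x\wdt b(x,\cdot)$ occurs, subtract $\langle\mu^x,\wdt b(x,\cdot)\rangle=0$; where $P_s^x f$ occurs for some $f$, subtract $\langle\mu^x,f\rangle$, which is a free operation because differentiating $Q(x)\mathbf{1}=0$ yields $\partial_{x_k}Q(x)\mathbf{1}=0$, so the $\partial_{x_k}Q(x)$ sandwiched between the two semigroups annihilates any constant absorbed into $f$. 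Each centered semigroup then contributes a factor $Ke^{-\varrho(\cdot)}$ in its own time variable, so the multi-time integrals converge uniformly in $x$ and $i$, and the stated bounds fall out. The same centering bookkeeping, iterated once more, handles the second derivative and delivers the uniformity in $|\LL|=\infty$ that the theorem requires.
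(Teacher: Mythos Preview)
The paper does not prove this theorem; it is quoted verbatim from \cite[Theorem~2.2]{SX25} (see the sentence immediately preceding the statement). So there is no ``paper's own proof'' to compare against. Your outline is the standard one and almost certainly matches the argument in \cite{SX25}: probabilistic representation plus exponential ergodicity for the zeroth-order bound, and Duhamel's formula $\partial_{x_k}P_t^x=\int_0^tP_s^x(\partial_{x_k}Q)P_{t-s}^x\,ds$ iterated for the derivative bounds.

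There is one point where your centering bookkeeping is not quite right. You justify centering the \emph{outer} factor $P_s^x f$ (with $f=(\partial_{x_k}Q)P_{t-s}^x\wdt b$) by invoking $\partial_{x_k}Q(x)\mathbf{1}=0$. That identity lets you insert a constant \emph{inside} $\partial_{x_k}Q$ for free, i.e.\ it justifies centering the inner $P_{t-s}^x\wdt b$, but it does \emph{not} make $\langle\mu^x,f\rangle$ vanish: in fact $\mu^x(\partial_{x_k}Q)=-(\partial_{x_k}\mu^x)Q$, so $\langle\mu^x,f\rangle=(\partial_{x_k}\mu^x)\wdt b$, which is generically nonzero. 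Likewise, $\partial_{x_k}\wdt b$ does not satisfy the central condition either, since $\langle\mu^x,\partial_{x_k}\wdt b\rangle=-\langle\partial_{x_k}\mu^x,\wdt b\rangle$. The resolution is that these two ``divergent'' contributions cancel exactly when you add the two Leibniz pieces $(\partial_{x_k}P_t^x)\wdt b$ and $P_t^x(\partial_{x_k}\wdt b)$, so the total integrand is centered even though the individual pieces are not. Equivalently and more cleanly, differentiate the Poisson equation itself to get $Q(x)\,\partial_{x_k}\Phi=-\bigl(\partial_{x_k}\wdt b+(\partial_{x_k}Q)\Phi\bigr)$, check that the right-hand side satisfies the central condition (the same cancellation), and apply the zeroth-order bound with this new source; iterating gives the second-derivative estimate. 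Either fix closes the gap, and the stated bounds then follow with the constants you indicate.
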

\begin{rem}\label{rem:bdd-Phi}\rm{
The boundedness assumption of $b$ in (H1) for finite state Markov chain  implies that there exists a constant $K>0$ s.t. $
\|\Phi(x,\cdot)-\Phi(y,\cdot)\|_\infty \leq K \|x-y\|, \ \|\Phi(x,\cdot)\|_\infty \leq K$; 
see \cite[Remark 3.3]{SX25}.}
\end{rem}

\begin{prop}\label{prop:hat-eta}
For every $\eta\in C([0,T];\rr^d)$, $\wdh I(\eta)=I(\eta)$.
\end{prop}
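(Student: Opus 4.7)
The plan is to prove $\wdh I(\eta) = I(\eta)$ by establishing both inequalities through an explicit correspondence between triples $(u,\psi,\pi) \in \CV(\eta)$ and measures $\Pi \in \CP_s(\eta)$. For $\wdh I(\eta) \leq I(\eta)$, given $(u,\psi,\pi) \in \CV(\eta)$, I would define for each $(s,y) \in [0,T]\times\LL$ a tuple $\theta^*(s,y) = (\theta^*(s,y)_j)_{j\in\LL}$ by setting $\theta^*(s,y)_j(dz) := \indi_{E_{yj}(\bar X(s))}(z)\,\psi_{yj}(s,z)\,\la_\zeta(dz)$ for $(y,j)\in\TT$ and zero otherwise, and then let
\[
\Pi(A) := \int_0^T \sum_{y\in\LL} \indi_A\bigl(s, y, \theta^*(s,y), u_y(s)\bigr)\,\pi_y(s)\,ds.
\]
Since $\sum_y \pi_y(s) = 1$, we have $[\Pi]_1 = \la_T$, so $\Pi \in \CP_{\text{leb}}(\HH_T)$. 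A direct computation shows $\Ga^{\theta^*(s,y)}_{yj}(\bar X(s)) = \int_{E_{yj}(\bar X(s))} \psi_{yj}(s,z)\,\la_\zeta(dz)$, whence substituting this $\Pi$ into \eqref{eta-pi} and \eqref{fast-pi} reproduces \eqref{eta-def} and \eqref{fast-cond}, and so $\Pi \in \CP_s(\eta)$. The $\|z\|^2$-part of the $\wdh I$-cost equals the $u$-part of the $I$-cost, while the $\wdh d$-part is bounded above by the $\psi$-part thanks to the indicator cutoff.

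For the reverse inequality $I(\eta) \leq \wdh I(\eta)$, I would take $\Pi \in \CP_s(\eta)$ with finite cost and produce a competitor for $I(\eta)$ by disintegration: since $[\Pi]_1 = \la_T$, write $\Pi(d\bv) = \rho_{s,y}(d\theta,dz)\,\pi_y(s)\,ds$ with $\pi_y(s) \geq 0$, $\sum_y \pi_y(s) = 1$ for a.e.\ $s$, and $\rho_{s,y}$ a probability kernel on $(\CM_F[0,\zeta])^{\LL} \times \rr^d$. Set
\[
u_y(s) := \int z\,\rho_{s,y}(d\theta,dz), \qquad \bar\theta_j(s,y)(dz) := \int \theta_j(dz)\,\rho_{s,y}(d\theta,dz),
\]
and define $\psi_{yj}(s,\cdot) := d\bar\theta_j(s,y)/d\la_\zeta$ for $(y,j)\in\TT$, using that the finiteness of $\int \wdh d(\theta)\,\Pi(d\bv)$ combined with the convexity of $\wdh d$ forces $\bar\theta_j(s,y) \ll \la_\zeta$ for $\Pi$-a.e.\ $(s,y)$. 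Inserting this triple into \eqref{eta-def} recovers \eqref{eta-pi}, and \eqref{fast-pi} reduces to \eqref{fast-cond}. For the cost, Jensen's inequality yields $\|u_y(s)\|^2 \leq \int \|z\|^2\,\rho_{s,y}(d\theta,dz)$, and the convexity of $\wdh d$ yields $\wdh d(\bar\theta_j(s,y)) \leq \int \wdh d(\theta_j)\,\rho_{s,y}(d\theta,dz)$; multiplying by $\tfrac12 \pi_y(s)$, integrating in $s$, and summing (over $y$ and over $(y,j)\in\TT$ respectively) give the desired bound. Taking the infimum then produces $I(\eta) \leq \wdh I(\eta)$.

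The main obstacle I anticipate is in the reverse direction: establishing joint measurability of the density $\psi_{yj}(s,z)$ obtained from the disintegration, and applying the convexity of $\wdh d$ correctly when some realizations of $\theta_j$ under $\rho_{s,y}$ are not $\la_\zeta$-absolutely continuous (in which case $\wdh d(\theta_j) = \infty$, so finite $\wdh I$-cost forces $\rho_{s,y}$ to concentrate on absolutely continuous $\theta_j$ for $\Pi$-a.e.\ $(s,y)$). The argument follows the framework of \cite[Section 2.2, 2.3]{BDG18}, but the identification must be carefully transcribed to the present moderate-deviations setting where the $\psi$-cost is quadratic rather than the relative-entropy cost $\ell(\cdot)$ appearing in the large-deviations rate function \eqref{I-LDP}. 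Once these measure-theoretic points are handled, matching the two dynamics identities is routine, because both \eqref{eta-def} and \eqref{eta-pi} are linear in the control variables once the disintegration is fixed.
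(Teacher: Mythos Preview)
Your proposal is correct and follows essentially the same approach as the paper's proof: both directions proceed by the natural correspondence between triples $(u,\psi,\pi)$ and measures $\Pi$, using a Dirac construction for $\wdh I \leq I$ and disintegration plus Jensen/convexity of $\wdh d$ for $I \leq \wdh I$. The only minor difference is that in the first direction you insert the indicator $\indi_{E_{yj}(\bar X(s))}$ into $\theta^*$, whereas the paper defines $\bar\theta_{ij}(s,dz) := \psi_{ij}(s,z)\la_\zeta(dz)$ on all of $[0,\zeta]$; both choices yield $\Pi\in\CP_s(\eta)$, yours with a (harmless) inequality in the cost and the paper's with equality.
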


\begin{proof}
Fix $\eta\in C([0,T];\rr^d)$, we first prove $\wdh I(\eta)\leq I(\eta)$. Without loss of generality, we assume $I(\eta)<\infty$, otherwise it is trivial.  For $\e>0$ and let $(u,\psi,\pi)\in \CV(\eta)$ be such that
\beq{I-e}
\sum_{i\in \LL}\half \int_0^T \|u_i(s)\|^2\pi_i(s)ds+\sum_{(i,j)\in \TT} \half \int_{[0,T]\times [0,\zeta]} |\psi_{ij}(s,z)|^2 \pi_i(s)\la_\zeta(dz)ds \leq I(\eta)+\e.
\eeq
Define $\bar \theta_{ij}(s,dz)\in \CM_F[0,\zeta]$ for $i,j\in \LL$ and $s\in [0,T]$ by
\bea
\bar \theta_{ij}(s,dz):=\left\{ \barray
\psi_{ij}(s,z)\la_\zeta(dz), \ad \quad\text{if } (i,j)\in \TT \text{ and } z\mapsto \psi_{ij}(s,z) \text{ is integrable}, \\
0,\ad  \quad \text{otherwise}.
\earray
\right.
\eea

Let $\bar \theta_i(s):=(\bar \theta_{ij}(s,\cdot))_{j\in \LL}$. Then define $\Pi\in \CP_{\text{leb}}(\HH_T)$ by 
\bea\ad 
\Pi([a,b]\times \{i\}\times A\times B):=\int_a^b \pi_i(s)\dl_{\bar\theta_i(s)}(A)\dl_{u_i(s)}(B)ds.
\eea
for $A \in \CB((\CM_F[0,\zeta])^{|\LL|}), B\in \CB(\rr^d)$, $0 \leq a \leq b \leq T$, and $i\in \LL$. It is not difficult to see that $\Pi\in \CP_{s}(\eta)$ and $
\int_{\HH_T} \half \big[\|z\|^2+\wdh d(\theta)\big] \Pi(d\bv)$ 
equals to the left side of \eqref{I-e}. Thus we proved $\wdh I(\eta)\leq I(\eta)+\e$. Since $\e$ is arbitrary small. We proved $\wdh I(\eta) \leq I(\eta)$  for all $\eta  \in C([0,T];\rr^d)$.

We now prove the inverse inequality $I(\eta)\leq \wdh I(\eta)$. Likewise, we assume, without loss of generality, $\wdh I(\eta)<\infty$. Let $\Pi\in \CP_s(\eta)$ be such that $
\int_{\HH_T} \half \big[\|z\|^2+\wdh d(\theta) \big] \Pi(d\bv)\leq \wdh I(\eta)+\e$. 
Denote by $[\Pi]_{34|12}(d\theta\times dz|y,s)$ the conditional distribution on the third and fourth coordinates given the first and second. We can disintegrate the measure $\Pi$ as 
\bea
\Pi(ds\times \{y\}\times d\theta \times dz)= ds\,\pi_y(s) [\Pi]_{34|12}(d\theta\times dz|y,s).
\eea
Define  
\bea\ad 
u_y(s):=\int_{(\CM_F[0,\zeta])^{|\LL|} \times \rr^d} z\, [\Pi]_{34|12}(d\theta\times dz|y,s),\; y\in \LL, s\in [0,T],
\eea
and for $(y,s)\in \LL\times [0,T]$, let
\bea\ad 
\bar\theta_y(s):=\int_{(\CM_F[0,\zeta])^{\LL}\times \rr^d} \theta\, [\Pi]_{34|12}(d\theta\times dz|y,s).
\eea
We write $\bar \theta_y=(\bar\theta_{y y'})_{y'\in \LL}$. The convexity of the square function implies 
\bea\ad 
\wdh d(\bar \theta_y(s)) \leq \int_{(\CM_F[0,\zeta])^{\LL} \times \rr^d} \wdh d(\theta) [\Pi]_{34|12}(d\theta \times dz|y,s).
\eea
Consequently,
\bea\ad 
\sum_{y\in \LL} \int_{[0,T]} \pi_y(s) \wdh d(\bar\theta_y(s))ds \leq \int_{\HH_T} \wdh d(\theta) \Pi(d\bv).
\eea
Define
\bea
\psi_{yy'}(s,z):=\left\{
\barray\disp 
\frac{d\bar\theta_{yy'}(s,\cdot)}{d\la_\zeta\cd}(z), \quad \ad \text{ if } \bar\theta_{yy'}(s,\cdot) \ll \la_\zeta\cd, \\
0, \quad \ad \text{ otherwise.}
\earray\right.
\eea
We obtain 
\bea\ad 
\sum_{(i,j)\in \TT} \int_{[0,T]\times [0,\zeta]} \half |\psi_{ij}(s,z)|^2 \pi_i(s)\la_\zeta(dz)ds \leq \int_{\HH_T} \half \wdh d(\theta) \Pi(d\bv).
\eea
Similarly, we can obtain $
\sum_{i\in \LL} \half \int_0^T \|u_i(s)\|^2 \pi_i(s) ds \leq \half \int_{\HH_T} \|z\|^2 \Pi(d\bv)$. 
To complete the proof, it suffices to show $
(u=(u_i),\psi = (\psi_{ij}), \pi = (\pi_i))\in \CV(\eta)$. 
Since $\Pi \in \CP_s(\eta)$, we have \eqref{eta-pi} which implies that \eqref{eta-def} holds. It remains to show \eqref{fast-cond}. Since $\Pi\in \CP_s(\eta)$, \eqref{fast-pi} implies that for all $j\in \LL$ and a.e. $s\in [0,T]$,
\bea\ad 
\sum_{y\in \LL} \pi_y(s) \int_{(\CM_F[0,\zeta])^{|\LL|}\times \rr^d}\Ga_{yj}(\bar X(s))[\Pi]_{34|12} (d\theta \times dz| y,s)=0.
\eea
It follows that
\bea\ad 
\sum_{y:y\neq j}\pi_y(s) \int_{(\CM_F[0,\zeta])^{|\LL|}\times \rr^d} \Ga_{yj}(\bar X(s))[\Pi]_{34|12}(d\theta\times dz| y,s) \\
\aad = \pi_j(s)\sum_{i: i\neq j} \int \Ga_{ji}(\bar X(s))[\Pi]_{34|12}(d\theta\times dz|y,s).
\eea
By the definition of $\Ga$, we get $
\sum_{y:y\neq j} \pi_y(s) \Ga_{yj}(\bar X(s))=\pi_j(s)\sum_{i: i\neq j}  \Ga_{ji}(\bar X(s))$. 
Consequently, we have \eqref{fast-cond}. The proof is complete.
\end{proof}

\subsection{Compact level sets}
We now show that the function $I$ (which is the same as $\wdh I$) is a good rate function.
\begin{prop}\label{prop:comp-le}
For every $M\in (0,\infty)$, the set $\{\eta\in C([0,T];\rr^d): I(\eta)\leq M\}$ is a compact set. Thus $I$ is a good rate function.
\end{prop}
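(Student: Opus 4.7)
My plan is to work with the equivalent form $\wdh I = I$ supplied by \propref{prop:hat-eta} and adapt the scheme of \cite[Section 2.3]{BDG18} to the present jump-Markovian setting. Fix $M \in (0,\infty)$ and let $\{\eta^n\} \subset C([0,T];\rr^d)$ satisfy $I(\eta^n) \leq M$. For each $n$, select a nearly optimal $\Pi^n \in \CP_s(\eta^n)$ such that
\[
\int_{\HH_T} \half\bigl[\|z\|^2 + \wdh d(\theta)\bigr]\, \Pi^n(d\bv) \leq M + \frac{1}{n}.
\]
The goal is to extract subsequences along which $\eta^n \to \eta$ in $C([0,T];\rr^d)$ and $\Pi^n \to \Pi$ weakly in $\CM_F(\HH_T)$, verify that $\Pi \in \CP_s(\eta)$, and conclude $I(\eta) \leq M$ via lower semicontinuity of the cost.

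The first step is to establish uniform equicontinuity of $\{\eta^n\}$. Using the representation \eqref{eta-pi}, boundedness of $\nabla \bar b$ and $\sg$ (from (H1)) and of $\Phi$ (\remref{rem:bdd-Phi}), together with the Cauchy-Schwarz bound $|\Ga^\theta_{yj}(x)| \leq \sqrt{\zeta}\,\wdh d(\theta)^{1/2}$, one obtains
\[
\|\eta^n(t) - \eta^n(s)\| \leq K\int_s^t \|\eta^n(r)\|\,dr + K\sqrt{t-s}\,\Bigl(\int_{\HH_T}\bigl[\|z\|^2 + \wdh d(\theta)\bigr]\,\Pi^n(d\bv)\Bigr)^{1/2}.
\]
Gronwall's inequality yields a uniform sup-norm bound, and the display above upgrades this to a uniform Hölder-$\frac{1}{2}$ estimate, so Arzelà-Ascoli provides a subsequence $\eta^n \to \eta$ in $C([0,T];\rr^d)$.

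Next I would verify tightness of $\{\Pi^n\}$ in $\CM_F(\HH_T)$ under the weak topology. The $[0,T]$-marginal is always $\la_T$ and the $\LL$-marginal is automatically tight. The $z$-marginal on $\rr^d$ is tight since $\|z\|^2$ is a tightness function and $\int \|z\|^2 d\Pi^n$ is uniformly bounded. For the $(\CM_F[0,\zeta])^{\LL}$-component, the bound on $\int \wdh d(\theta)\,d\Pi^n$ combined with Cauchy-Schwarz yields uniform bounds on the total masses $\theta_i([0,\zeta])$ except on a set of small $\Pi^n$-measure, and since $[0,\zeta]$ is compact this gives relative compactness in the weak topology. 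Passing to a subsequence produces $\Pi^n \to \Pi$ weakly with $\Pi \in \CP_{\text{leb}}(\HH_T)$. Lower semicontinuity of the cost then follows from the continuity and nonnegativity of $\bv \mapsto \|z\|^2$ and from the variational representation of $\wdh d(\theta)$ as a supremum of continuous linear functionals of $\theta$, giving
\[
\int_{\HH_T} \half\bigl[\|z\|^2 + \wdh d(\theta)\bigr]\,\Pi(d\bv) \leq \liminf_n \int_{\HH_T} \half\bigl[\|z\|^2 + \wdh d(\theta)\bigr]\,\Pi^n(d\bv) \leq M.
\]

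The main obstacle is verifying $\Pi \in \CP_s(\eta)$, i.e., passing to the limit in \eqref{eta-pi} and \eqref{fast-pi}. The $\nabla \bar b(\bar X(s))\eta^n(s)$ term converges by uniform convergence $\eta^n \to \eta$ combined with boundedness, and the $\sg(\bar X(s),y)z$ term converges via the uniform integrability of $z$ under $\Pi^n$ coming from the $L^2$-bound. The delicate piece is the integrand $\sum_j [\Phi(\bar X(s),j)-\Phi(\bar X(s),y)]\,\Ga^\theta_{yj}(\bar X(s))$, which is linear in $\theta$ but contains the indicator $\indi_{E_{yj}(\bar X(s))}$ as part of $\theta_j \mapsto \theta_j(E_{yj}(\bar X(s)))$; this indicator is discontinuous at the single point $\{q_{yj}(\bar X(s))\}$, so weak convergence of $\Pi^n$ does not apply directly. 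I would sandwich the indicator between continuous approximations with vanishing $L^1$-error, pass the continuous integrands through the weak limit, and then control the approximation error using the uniform $L^2$-bound on densities together with the fact that the limiting $\theta$-marginal is also absolutely continuous (by lower semicontinuity of $\wdh d$), so isolated points are null. Condition \eqref{fast-pi} is handled analogously, since $\Ga_{yj}(\bar X(s))$ is bounded continuous in $(s,y)$. This establishes $\Pi \in \CP_s(\eta)$, hence $\wdh I(\eta) \leq M$, and compactness of the level set follows.
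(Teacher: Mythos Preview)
Your proposal is correct and follows essentially the same route as the paper: choose nearly optimal $\Pi^n \in \CP_s(\eta^n)$, establish precompactness of $\{\Pi^n\}$ in $\CP_{\text{leb}}(\HH_T)$ via the uniform cost bound, derive a uniform sup-norm and H\"older-$\tfrac12$ estimate for $\{\eta^n\}$ from \eqref{eta-pi} and Gronwall, invoke Arzel\`a--Ascoli, and then pass to the limit in \eqref{eta-pi}--\eqref{fast-pi} using uniform integrability for the $z$-term and lower semicontinuity of $\wdh d$ for the cost. The one place you are more careful than the paper is the $\Ga^\theta_{yj}(\bar X(s))=\theta_j(E_{yj}(\bar X(s)))$ term: the paper simply asserts that $(s,y,\theta)\mapsto \sum_j[\Phi(\bar X(s),j)-\Phi(\bar X(s),y)]\Ga^\theta_{yj}(\bar X(s))$ is continuous and bounded and passes the weak limit directly, whereas you correctly flag the endpoint discontinuity and propose a sandwich argument controlled by the uniform $L^2$-density bound; either justification works since the limit $\Pi$ charges only absolutely continuous $\theta$'s.
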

\begin{proof}
Let $\{\eta_n\}$ be a sequence in the level set $\{\eta\in C([0,T];\rr^d): I(\eta)\leq M\}$. The $I(\eta_n)\leq M$ implies for each $n \in \NN$, there exists some $\Pi_n\in \CP_{s}(\eta_n)$ such that
\beq{I-n}
\int_{\HH_T}\half\Big[\|z\|^2 + \wdh d(\theta) \Big] \Pi_n(d\bv) \leq M+\frac{1}{n}.
\eeq
Then it suffices to prove that $\{\eta_n\}$ is pre-compact, and every limit point belongs to the set $\{\eta\in C([0,T];\rr^d): I(\eta)\leq M\}$. To this end, we prove: (a) $\{\Pi_n, \eta_n\}_{n\in \NN}$ is pre-compact in $\CP_{\text{leb}}(\HH_T)\times C([0,T];\rr^d)$,
and for any limit point $(\eta,\Pi)$, they satisfy (b) $\int_{\HH_T} \half \big[\|z\|^2 + \wdh d(\theta)\big] \Pi(d\bv) \leq M$, and (c) \eqref{eta-pi} and \eqref{fast-pi} holds.

We now prove (a). Since $\LL$ is a finite set (thus a compact set) and $[\Pi_n]_1=\la_T$ for all $n$, to prove the pre-compactness of $\{\Pi_n\}$, it suffices to prove for every $\dl>0$, there exists a constant $K_1>0$ such that
\beq{Pi-n}
\sup_{n\in \NN}\Pi_n \Big\{(s,y,\theta,z)\in \HH_T: \sum_{j\in \LL}\theta_j [0,\zeta] + \|z\|\geq K_1\Big\} \leq \dl.
\eeq
From \eqref{I-n}, we have 
\beq{b-z-pi}
\int_{\HH_T} \|z\|^2 \Pi_n(d\bv) \leq 2 (M+1), \quad \int_{\HH_T} \wdh d(\theta) \Pi_n(d\bv) \leq 2(M+1).
\eeq
Then by the elementary inequality $ab\leq (a^2+b^2)/2$, we have
\beq{b-th-pi}\barray\disp
\sum_{j\in \LL} \int_{\HH_T} \theta_j[0,\zeta] \Pi_n(d\bv)  
\ad 
\leq \sum_{j\in \LL} \int_{\HH_T} \half \Pi_n(d\bv) +\sum_{j\in \LL} \int_{\HH_T} \half |\theta_j[0,\zeta]|^2 \Pi_n(d\bv)\\
\aad \leq \half |\LL|^2 T +\sum_{j\in \LL} \int_{\HH_T} \half \Big|\int_{[0,\zeta]} \frac{d\theta_j}{d\la_\zeta}(z)\la_\zeta(dz)\Big|^2 \Pi_n(d\bv) \\
\aad  \leq \half |\LL|^2 T + \half \int_{\HH_T} \wdh d(\theta) \Pi_n(d\bv) 
\leq \half |\LL|^2 T+ (M+1).
\earray\eeq
Thus the Markov inequality and \eqref{b-z-pi} implies 
\eqref{Pi-n}. It yields that $\{\Pi_n\}$ is pre-compact in $\CP_{\text{leb}}(\HH_T)$. We now prove the pre-compactness of $\{\eta_n\}$. We first show
\beq{K-eta}
\sup_{n\in \NN} \sup_{t\in [0,T]}\|\eta_n(t)\|^2 =: K_\eta <\infty.
\eeq
Since $\Pi_n\in \CP_s(\eta_n)$, we have
\beq{eta-n}\barray
\eta_n(t) \ad =\int_{\HH_t} \nabla \bar b(\bar X(s)) \eta_n(s) \Pi_n(d\bv) + \int_{\HH_t} \sg(\bar X(s),y) z\Pi_n(d\bv)\\
\aad +\int_{\HH_t}\sum_{j\in \LL} [\Phi(\bar X(s),j)-\Phi(\bar X(s),y)] \Ga_{yj}^\theta (\bar X(s)) \Pi_n(d\bv).
\earray\eeq
By the boundedness of $b$ (thus boundedness of $\Phi$) and the linear growth of $\sg$, there exists a constant $K>0$ such that
\bea
\!\!\! \|\eta_n(t)\|^2 \ad\!\!\!\! \leq 3K \int_{\HH_t} \|\eta_n(s)\|^2 \Pi_n(d\bv) + 3K \int_{\HH_t}  (\|\bar X(s)\|^2 +1) \|z\|^2 \Pi_n(d\bv) \\
\aad \qquad + 3K \sum_{j\in \LL} \int_{\HH_t} |\Ga_{yj}^\theta(\bar X(s))|^2 \Pi_n(d\bv) \\
\aad\!\!\!\! \leq 3K\! \int_0^t \!\! \|\eta_n(s)\|^2 ds +6K(M+1)(1+\|\bar X\|_T)+ 3K \! \sum_{j\in \LL}\!\! \int_{\HH_t} \!\!\! |\theta_j([0,\zeta])|^2 \Pi_n(d\bv) \\
\aad\!\!\!\! \leq 3K\int_0^t \|\eta_n(s)\|^2 ds  + 6K(M+1)(1+\|\bar X\|_T)+6K(M+1),
\eea
where $\|\bar X\|_T:=\sup_{t\in [0,T]}\|\bar X(t)\|$, the second inequality follows from \eqref{b-z-pi} and the definition of $\Ga_{yj}^\theta$, the last inequality is due to \eqref{b-th-pi}. The inequality \eqref{K-eta} then follows by the Gronwall inequality. 
Next for $0\leq t_1\leq t_2 \leq T$, we have
\bea\ad 
\|\eta_n(t_2)-\eta_n(t_1)\| \\ \aad \leq \int_{\HH_{[t_1,t_2]}} \|\nabla \bar b(\bar X(s))\eta_n(s)\| \Pi_n(d\bv)+\int_{\HH_{[t_1,t_2]}} \|\sg(\bar X(s),y)z\| \Pi_n(d\bv)\\
\aad\quad + \int_{\HH_{[t_1,t_2]}} \Big\|\sum_{j\in \LL}[\Phi(\bar X(s),j)-\Phi(\bar X(s),i)]\Ga_{yj}^\theta(\bar X(s))\Big\| \Pi_n(d\bv)\\
\aad \leq K \int_{\HH_{[t_1,t_2]}}\|\eta_n(s)\| \Pi_n(d\bv)+ \int_{\HH_{[t_1,t_2]}}\|\sg(\bar X(s),y)\| \|z\| \Pi_n(d\bv) \\
\aad \quad + K \int_{\HH_{[t_1,t_2]}} \sum_{j\in \LL} |\theta_j(E_{ij}(\bar X(s)))| \Pi_n(d\bv)\\
\aad \leq K \Big( \int_{\HH_{[t_1,t_2]}} \Pi_n(d\bv)\Big)^{1/2} \Big( \int_{\HH_{[t_1,t_2]}} \|\eta_n(s)\|^2 \Pi_n(d\bv)\Big)^{1/2} \\
\aad\quad +  \Big(\int_{\HH_{[t_1,t_2]}} (K(1+\|\bar X\|_T))^2 \Pi_n(d\bv)\Big)^{1/2} \Big( \int_{\HH_T} \|z\|^2 \Pi_n(d\bv)\Big)^{1/2} \\
\aad\quad + \sum_{j\in \LL}\Big(\int_{\HH_{[t_1,t_2]}} \Pi_n(d\bv) \Big)^{1/2} \Big( \int_{\HH_T} |\theta_j[0,\zeta]|^2 \Pi_n(d\bv)\Big)^{1/2} \\
\aad \leq K|t_2-t_1|^{1/2},
\eea
where the third inequality follows from the H\"{o}lder inequality and the linear growth of $\sg$, the last inequality follows from the uniform estimates \eqref{K-eta}, \eqref{b-z-pi}, and \eqref{b-th-pi}. This estimates with the uniform boundedness of \eqref{K-eta} implies that $\{\eta_n\}$ is pre-compact in $C([0,T];\rr^d)$. Thus (a) is proved. Suppose that $(\eta,\Pi)$ is a limit point of $\{\eta_n, \Pi_n\}_{n\in \NN}$. Part (b) can be proved by \eqref{I-n} following the Fatou's lemma and the lower semi-continuity of $\wdh d$. Let us now prove part (c). Without loss of generality, we can assume that the full sequence of $\{\eta_n, \Pi_n\}$ converges to $(\eta,\Pi)$. Then,
\bea\ad\!\!\!\!\!\!\!
\int_{\HH_T} \|\nabla \bar b(\bar X(s)) (\eta_n(s)-\eta(s))\| \Pi_n(d\bv)\\
\aad\!\!\!\!\!\!\!\!\! \leq K\sup_{s\in [0,T]}\|\eta_n(s)-\eta(s)\|\int_{\HH_T}\Pi_n(d\bv)\leq K\sup_{s\in [0,T]}\|\eta_n(s)-\eta(s)\| \sqrt{2T(M+1)} \to 0
\eea
as $n\to \infty$. Following the boundedness of $\Phi$ in \remref{rem:bdd-Phi} and \eqref{b-th-pi}, the map
\bea
(s,y,\theta) \mapsto  \sum_{j\in \LL}[\Phi(\bar X(s),i)-\Phi(\bar X(s),y)] \Ga_{yj}^\theta(\bar X(s))
\eea
is a continuous and bounded map. From the convergence of $\Pi_n$ to $\Pi$, we obtain
\beq{conv-Phi-Ga}\barray
\int_{\HH_t} \sum_{j\in \LL} [\Phi(\bar X(s),j)-\Phi(\bar X(s),y)] \Ga_{yj}^\theta(\bar X(s))\Pi_n(d\bv) \\
\to \int_{\HH_t} \sum_{j\in \LL} [\Phi(\bar X(s),j)-\Phi(\bar X(s),y)] \Ga_{yj}^\theta(\bar X(s))\Pi(d\bv).
\earray\eeq
To prove the convergence of the second term on the right-hand side of \eqref{eta-n}, we take the uniform integrability argument. Fix $R'>0$, the mapping $(s,y,z) \mapsto \sg(\bar X(s), y)z \indi_{\{\|z\|\leq R'\}}$ is continuous and bounded, thus as $n\to \infty$,
\bea
\int_{\HH_t} \sg(\bar X(s), y)z \indi_{\{\|z\|\leq R'\}} \Pi_n(d\textbf{v})\to \int_{\HH_t} \sg(\bar X(s), y)z \indi_{\{\|z\|\leq R'\}} \Pi(d\textbf{v}).
\eea
Therefore, for arbitrary small $\e'>0$, there exits a $n(R')>0$ such that for all $n\geq n(R')$,
\bea
\Big|\int_{\HH_t} \sg(\bar X(s), y)z \indi_{\{\|z\|\leq R'\}} \Pi_n(d\textbf{v})- \int_{\HH_t} \sg(\bar X(s), y)z \indi_{\{\|z\|\leq R'\}} \Pi(d\textbf{v})\Big| \leq \e'/3.
\eea
Moreover, the Cauchy-Schwarz inequality, the Lipschitz continuity of $\sg$, the finiteness of $\LL$, and \eqref{b-z-pi} imply there exists a constant $K'>0$ such that
\bea\ad 
\Big|\int_{\{||z||>R'\}} \sg(\bar X(s),y)z \Pi_n(d\bv)\Big| \leq K' \int_{\{||z||>R'\}} \|z\|\Pi_n(d\bv) \\
\aad \leq K' \Big(\int_{\HH_t} \|z\|^2 \Pi_n(d\bv)\Big)^{1/2} \Big(\Pi_n(\|z\|\geq R')\Big)^{1/2} \leq \frac{2K'(M+1)}{R'}.
\eea
The same estimates above hold with $\Pi$ in place of $\Pi_n$. Therefore choose $R'$ large enough such that $2K'(M+1)/R'<\e'/3$. For this choice of $R'$ and for $n \geq n(R')$, we have 
\beq{uni-arg}\barray\ad 
\Big|\int_{\HH_t} \sg(\bar X(s),y)z \Pi_n(d\bv) - \int_{\HH_t} \sg(\bar X(s),y)z \Pi(d\bv)\Big| \\
\aad \leq \Big|\int_{\HH_t} \sg(\bar X(s), y)z \indi_{\{\|z\|\leq R'\}} \Pi_n(d\textbf{v})- \int_{\HH_t} \sg(\bar X(s), y)z \indi_{\{\|z\|\leq R'\}} \Pi(d\textbf{v})\Big| \\
\aad\quad + \Big|\int_{\{||z||>R'\}} \sg(\bar X(s),y)z \Pi_n(d\bv)\Big| +\Big|\int_{\{||z||>R'\}} \sg(\bar X(s),y)z \Pi(d\bv)\Big| \\
\aad \leq \e'/3+\e'/3+\e'/3 =\e'.
\earray\eeq
Since $\e'$ is arbitrary small, the convergence $\int_{\HH_t}\sg(\bar X(s), y)z\, \Pi_n(d\bv) \to \int_{\HH_t}\sg(\bar X(s), y)z\, \Pi_n(d\bv) $ hold.
Consequently, \eqref{eta-pi} holds. Similarly, we can prove \eqref{fast-pi}. The proof is complete.
\end{proof}

\section{Limiting behaviors of controlled processes}\label{sec:lim}
In this section, we focus on establishing the tightness of $\etae$ and characterizing its limit. To achieve this, we first use the Poisson equation \eqref{pq} to analyze $\eta_1^{\e, u^\e, \phi^\e}$. Given that we only assume the Lipschitz continuity for $Q(x)$, the regularity of $\Phi(x,i)$ with respect to $x$ in \thmref{thm:Phi} is not smooth enough to use It\^{o} formula. We proceed the following approximation technique.

Let $\rho: \rr^d\to [0,1]$ be a smooth function such that for $m\in \NN_+$,
\bea\ad 
\int_{\rr^d} \rho(x)dx=1, \int_{\rr^d} |x|^m \rho(x)dx \leq K_m, \text{ and } \|\nabla^m \rho(x)\|\leq K_m \rho(x).
\eea
Define the mollified approximation for $\Phi:\rr^d\times \LL \to 
\rr^d$ as:
\beq{Phi-m}
\Phi_m(x,i)=\int_{\rr^d} \Phi(x-z,i)\rho^m(z)dz,
\eeq
where $\rho^m(z):=m^d \rho(mz)$. For instance, we can take the mollification function $\rho$ as
\bea\ad\!\!\!
\rho(x)=\frac{1}{Z_d}\left\{\barray \exp\Big(\disp -\frac{1}{1-|x|^2}\Big)\quad \ad \text{ if } |x|<1 \\
0 \quad \ad \text{ if } |x| \geq 1
\earray \right. \text{with} \quad Z_d:= \int_{|x|<1} \exp(-1/(1-|x|^2)) dx.
\eea

\begin{lem}\label{lem:app}
For $x\in \rr^d, m\in \NN_+$, we have
\bea
\|\Phi(x,\cdot)-\Phi_m(x,\cdot)\|_\infty \ad \leq K/m, \quad \|\Phi_m(x,\cdot)\|_\infty \leq K, \\
 \|\nabla_x \Phi_m(x,\cdot)\|_\infty \ad \leq K,\quad \|\nabla_x^2 \Phi_m(x,\cdot)\|_\infty \leq Km, 
\eea
where $K$ is a constant independent of $x$ and $m$. 
\end{lem}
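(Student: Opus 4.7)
The plan is to prove all four bounds by carefully exploiting two inputs: the boundedness and Lipschitz continuity of $\Phi(\cdot,i)$ uniformly in $i\in \LL$ (stated in \remref{rem:bdd-Phi}), and the moment/derivative properties of the mollifier $\rho$ together with the scaling $\rho^m(z)=m^d\rho(mz)$, which give
\[
\int_{\rr^d}|z|\,\rho^m(z)\,dz \le K/m, \qquad \|\nabla^k\rho^m(z)\| \le K_k\,m^k\,\rho^m(z),\quad k=1,2.
\]
Writing $\Phi_m(x,i)=\int_{\rr^d}\Phi(x-z,i)\rho^m(z)\,dz$, the second bound $\|\Phi_m(x,\cdot)\|_\infty\le K$ is immediate from $\|\Phi(\cdot,i)\|_\infty\le K$ and $\int\rho^m=1$. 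The first bound follows by inserting $\Phi(x,i)=\int\Phi(x,i)\rho^m(z)\,dz$ and applying Lipschitz continuity:
\[
\|\Phi_m(x,i)-\Phi(x,i)\|
\le \int \|\Phi(x-z,i)-\Phi(x,i)\|\rho^m(z)\,dz
\le K\!\int|z|\rho^m(z)\,dz \le K/m.
\]

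For the gradient bound, the point is that we do not have pointwise derivatives of $\Phi$, so I transfer the derivative onto $\rho^m$. Using $\nabla_x\rho^m(x-u)=-\nabla_u\rho^m(x-u)$ and the change of variable $z=x-u$, one gets $\nabla_x \Phi_m(x,i)=\int \Phi(x-z,i)\nabla\rho^m(z)\,dz$ (in the sense of the convolution $\Phi\ast\rho^m$). Since $\int\nabla\rho^m(z)\,dz=0$, I subtract $\Phi(x,i)$ under the integral:
\[
\nabla_x\Phi_m(x,i)=\int\bigl[\Phi(x-z,i)-\Phi(x,i)\bigr]\nabla\rho^m(z)\,dz,
\]
so that Lipschitz continuity and the assumption $\|\nabla\rho(z)\|\le K_1\rho(z)$ yield
\[
\|\nabla_x\Phi_m(x,i)\| \le K\!\int |z|\,\|\nabla\rho^m(z)\|\,dz
\le K K_1\!\int|z|\,m\,\rho^m(z)\,dz
\le K.
\]

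The second-derivative bound follows by the same device with two derivatives placed on the kernel: $\nabla_x^2\Phi_m(x,i)=\int\Phi(x-z,i)\nabla^2\rho^m(z)\,dz$, and again $\int \nabla^2\rho^m(z)\,dz=0$ lets us subtract $\Phi(x,i)$, so Lipschitz continuity gives
\[
\|\nabla_x^2\Phi_m(x,i)\|
\le K\!\int |z|\,\|\nabla^2\rho^m(z)\|\,dz
\le K K_2\!\int|z|\,m^2\,\rho^m(z)\,dz
\le K m.
\]
There is no serious obstacle: the only delicate point is the symmetrization trick (subtracting the value of $\Phi$ at $x$) which compensates the factor $m^k$ in $\nabla^k\rho^m$ by a factor $|z|$ of order $1/m$, so we lose exactly one factor of $m$ per derivative and gain back one by Lipschitz continuity. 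This is why the gradient stays bounded and the Hessian picks up only a single factor of $m$.
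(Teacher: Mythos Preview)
Your argument is correct and is the standard mollification proof: bound the approximation error via Lipschitz continuity and the first moment of $\rho^m$, and for the derivative bounds transfer all derivatives to the kernel, subtract the constant $\Phi(x,i)$ using $\int\nabla^k\rho^m=0$, and again apply Lipschitz continuity together with $\|\nabla^k\rho^m\|\le K_k m^k\rho^m$. The paper does not give its own proof here (it defers to \cite[Lemma~4.3]{SX25}), but your argument is precisely the expected one.
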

\begin{proof}
The proof can be found in \cite[Lemma 4.3]{SX25}, thus details are omitted.
\end{proof}

To continue, we recall following inequalities in \cite[Lemma 3.1, Lemma 3.2]{BDG16}.
\begin{lem}\label{lem:ineq}
\begin{itemize}
\item[\rm{(i)}] For $a, b\in (0,\infty)$ and $\sg\in [1,\infty)$, $ab\leq e^{\sg a} + \frac{1}{\sg} \ell(b)$.
\item[\rm{(ii)}] For every $\beta>0$, there exists $\kappa(\beta), \kappa_1'(\beta)\in (0,\infty)$ such that $\kappa(\beta), \kappa_1'(\beta) \to 0$ as $\beta\to \infty$, and 
\bea
|x-1| \ad \leq \kappa_1(\beta) \ell(x) \quad \text{ if } |x-1|\geq \beta, x\geq 0, \\
x \ad \leq k_1'(\beta) \ell (x), \quad \text{ if } x\geq \beta >1.
\eea
\item[\rm{(iii)}] There is a non-decreasing function $\kappa_2: (0,\infty) \to (0,\infty)$ such that for each $\beta>0$, 
\bea
|x-1|^2 \leq \kappa_2(\beta) \ell(x), \quad \text{if } |x-1|\leq \beta, x\geq 0. 
\eea
\item[\rm{(iv)}] There exists $\kappa_3\in (0,\infty)$ such that for all $x\geq 0$,
\bea
\ell(x)\leq \kappa_3 |x-1|^2, \quad |\ell(x)-(x-1)^2/2| \leq \kappa_3 |x-1|^3.
\eea
\end{itemize}
\end{lem}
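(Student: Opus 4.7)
All four parts are elementary analytic facts about the convex function $\ell(x)=x\log x-x+1$ on $[0,\infty)$ (with the convention $0\log 0:=0$), which satisfies $\ell(1)=\ell'(1)=0$, $\ell''(x)=1/x>0$, and has asymptotics $\ell(0)=1$, $\ell(x)\sim (x-1)^2/2$ near $x=1$, and $\ell(x)\sim x\log x$ as $x\to\infty$. My plan is to handle (i) by Fenchel--Young duality, and (ii)--(iv) by defining the stated constants as suprema of continuous ratios over the stated domains and using the asymptotics to verify finiteness, and, for (ii), the decay as $\beta\to\infty$.

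For (i), I would first compute the Legendre transform of $\ell$ on $[0,\infty)$ (extended to $+\infty$ elsewhere): solving $\ell'(x)=\log x=y$ gives $x=e^y$ and hence $\ell^*(y)=ye^y-\ell(e^y)=e^y-1$, so Fenchel--Young yields $sb\le \ell(b)+e^s-1$ for all $s\in\rr$ and $b\ge 0$. Setting $s=\sigma a$ and dividing by $\sigma\ge 1$ gives $ab\le \sigma^{-1}\ell(b)+e^{\sigma a}$, which is (i). For (ii)--(iv), I would simply set $\kappa_1(\beta):=\sup\{|x-1|/\ell(x):x\ge 0,\,|x-1|\ge\beta\}$, $\kappa_1'(\beta):=\sup\{x/\ell(x):x\ge\beta\}$, $\kappa_2(\beta):=\sup\{(x-1)^2/\ell(x):x\ge 0,\,|x-1|\le\beta\}$, and take $\kappa_3$ to be the maximum of $\sup_{x\ge 0}\ell(x)/(x-1)^2$ and $\sup_{x\ge 0}|\ell(x)-(x-1)^2/2|/|x-1|^3$, where in (iv) the ratios are extended by continuity at $x=1$ via the Taylor expansion $\ell(x)=(x-1)^2/2-(x-1)^3/6+O((x-1)^4)$. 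Finiteness of each supremum reduces to checking three boundary regimes: removable singularities at $x=1$ are handled by Taylor, the value at $x=0$ is explicit since $\ell(0)=1$, and behavior at infinity is controlled by the super-linear growth of $\ell$ dominating the polynomial numerators. Monotonicity of $\kappa_2$ in $\beta$ is immediate from the inclusion of level sets.

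The decay $\kappa_1(\beta),\kappa_1'(\beta)\to 0$ in (ii) is the only step that is not immediate from compactness/continuity. For this I would compute $\frac{d}{dx}(x/\ell(x))=(1-x)/\ell(x)^2<0$ on $(1,\infty)$, so $x/\ell(x)$ is strictly decreasing there and $\kappa_1'(\beta)=\beta/\ell(\beta)\to 0$; a parallel computation shows the numerator of $\frac{d}{dx}((x-1)/\ell(x))$ equals $-x+1+\log x$, which is strictly decreasing from $0$ at $x=1$ hence negative on $(1,\infty)$, so $(x-1)/\ell(x)$ is also strictly decreasing there. The main obstacle is keeping track of the two components of $\{|x-1|\ge\beta\}\cap[0,\infty)$ for small $\beta\in(0,1)$, where the value at $x=0$ is $1$ and prevents $\kappa_1(\beta)$ from being small uniformly in $\beta$; since the claim is only about the limit $\beta\to\infty$, the left component eventually disappears and one is left with the tail $[1+\beta,\infty)$, where the monotonicity just established gives $\kappa_1(\beta)=\beta/\ell(1+\beta)\to 0$.
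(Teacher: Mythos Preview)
Your argument is correct. The paper does not actually prove this lemma: it simply records the inequalities and cites \cite[Lemma 3.1, Lemma 3.2]{BDG16} (see the sentence immediately preceding the statement). So there is nothing to compare against on the paper's side; you have supplied a clean direct proof where the paper defers to a reference. Your Fenchel--Young computation for (i) and the supremum-of-ratios construction for (ii)--(iv), together with the monotonicity check via $\frac{d}{dx}\big((x-1)/\ell(x)\big)=(-x+1+\log x)/\ell(x)^2<0$ on $(1,\infty)$, are exactly the right ingredients, and your handling of the left component $[0,1-\beta]$ disappearing for $\beta>1$ is the correct resolution of the only subtle point.
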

\begin{lem}\label{lem:ineq-2}
Suppose $\phi\in \CS_{+,\e}^M$ for some $M>0$, where $\CS_{+,\e}^M$ is defined in  \eqref{S-plus}. Let $\psi=(\phi-1)/\sqe h(\e)$, we have the following inequalities:
\begin{itemize}
\item[\rm{(i)}] For all $\beta>0$, 
\beq{psi-abs}
\int_{[0,T]\times [0,\zeta]} |\psi| \indi_{\{|\psi|\geq \beta/\sqe h(\e)\}} \la_\zeta(dz)ds \leq M \sqe h(\e) \kappa_1(\beta), 
\eeq
and 
\beq{psi-sq} 
\int_{[0,T]\times [0,\zeta]} |\psi|^2 \indi_{\{|\psi| \leq \beta/\sqe h(\e)\}} \la_\zeta(dz)ds \leq M \kappa_2(\beta).
\eeq
\item[\rm{(ii)}] For all $\beta>1$,
\beq{phi-geq}
\int_{[0,T]\times [0,\zeta]} \phi \indi_{\{\phi \geq \beta\}} \la_\zeta(dz)ds \leq M \e h^2(\e) k_1'(\beta).
\eeq
\end{itemize}
Here $\kappa_1(\beta), \kappa_1'(\beta)$, and $\kappa_2(\beta)$ are as in \lemref{lem:ineq}.
\end{lem}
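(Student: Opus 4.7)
The plan is to observe that each of the three bounds is a direct pointwise consequence of the corresponding inequality in \lemref{lem:ineq} applied to $\phi(s,z)$, followed by integrating against $\la_\zeta \otimes ds$ and invoking the defining control $\int_{[0,T]\times[0,\zeta]} \ell(\phi)\,\la_\zeta(dz)ds \leq M\e h^2(\e)$ from $\phi \in \CS_{+,\e}^M$. The normalization $\psi = (\phi-1)/\sqrt{\e}h(\e)$ only enters by a change of level in the indicator sets: $|\psi| \geq \beta/\sqrt{\e}h(\e) \Leftrightarrow |\phi - 1| \geq \beta$, and analogously for the complementary event.

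First I would prove \eqref{psi-abs}. On the event $\{|\phi - 1| \geq \beta\}$, \lemref{lem:ineq}(ii) gives the pointwise bound $|\phi - 1| \leq \kappa_1(\beta)\ell(\phi)$. Rewriting $|\psi| = |\phi-1|/(\sqrt{\e}h(\e))$ and integrating,
$$ \int_{[0,T]\times[0,\zeta]} |\psi|\,\indi_{\{|\psi|\geq \beta/\sqrt{\e}h(\e)\}}\,\la_\zeta(dz)ds \leq \frac{\kappa_1(\beta)}{\sqrt{\e}h(\e)} \int_{[0,T]\times[0,\zeta]} \ell(\phi)\,\la_\zeta(dz)ds, $$
and the defining bound on $L_{T,2}(\phi)$ yields the right-hand side $M\sqrt{\e}h(\e)\kappa_1(\beta)$.

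Next I would prove \eqref{psi-sq} by the same template. On $\{|\phi - 1| \leq \beta\}$, \lemref{lem:ineq}(iii) gives $|\phi-1|^2 \leq \kappa_2(\beta)\ell(\phi)$. Then
$$ \int_{[0,T]\times[0,\zeta]} |\psi|^2\,\indi_{\{|\psi|\leq \beta/\sqrt{\e}h(\e)\}}\,\la_\zeta(dz)ds = \frac{1}{\e h^2(\e)} \int |\phi-1|^2 \indi_{\{|\phi-1|\leq \beta\}}\,\la_\zeta(dz)ds, $$
and the pointwise bound together with $L_{T,2}(\phi) \leq M\e h^2(\e)$ gives the cancellation of $\e h^2(\e)$ and the stated bound $M\kappa_2(\beta)$.

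Finally, for \eqref{phi-geq}, on $\{\phi \geq \beta\}$ with $\beta > 1$, the second inequality in \lemref{lem:ineq}(ii) yields $\phi \leq \kappa_1'(\beta)\ell(\phi)$, and integrating against $\la_\zeta \otimes ds$ with the $L_{T,2}$ bound gives the result directly. There is no genuine obstacle here: all three estimates reduce to the same template of pointwise inequality plus integration against the control budget, and no additional structure of the two-scale system, of $\Phi$, or of the Poisson equation enters. The only bookkeeping point is to keep track of where the normalization factor $1/(\sqrt{\e}h(\e))$ is pulled outside the integral before applying the bound on $L_{T,2}(\phi)$.
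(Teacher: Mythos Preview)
Your proposal is correct and matches exactly what the paper intends: the lemma is stated immediately after \lemref{lem:ineq} with no explicit proof, so it is meant as a direct corollary obtained by applying the pointwise inequalities of \lemref{lem:ineq}(ii)--(iii) to $\phi(s,z)$, integrating, and using the budget $L_{T,2}(\phi)\le M\e h^2(\e)$. Your bookkeeping of the normalization $\psi=(\phi-1)/(\sqe h(\e))$ and the equivalence of the indicator events is precisely the intended argument.
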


For any $\e>0$, let $\Xe, \Ye$ be solution of \eqref{c-sde} and define $\etae$ as in \eqref{def-etae}. The following lemma establishes the uniform boundedness for the controlled slow process $X^{\e, u^\e,\phi^\e}$.
\begin{lem}\label{lem:X-bdd}
Suppose (H1)-(H3) hold. For every $M \in \NN$, we have
\bea\ad 
\sup_{\e\in (0,1)} \sup_{\substack{(u^\e,\phi^\e)\in \CU_b^+: \\
L_{T,1}(u^\e)\leq M, L_{T,2}(\phi^\e)\leq M\e h^2(\e)}}  \EE\Big(\sup_{t\in [0,T]}\|\Xe(t)\|^2 \Big) <\infty.
\eea
\end{lem}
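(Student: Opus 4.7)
The control cost bound $L_{T,1}(u^\e) \leq M$ means $\int_0^T \|u^\e(s)\|^2 ds \leq 2M$ almost surely, and recall that $\e h^2(\e) = (\sqrt{\e}h(\e))^2 \to 0$, hence is uniformly bounded for small $\e$. Under (H1), $b$ is bounded and $\sigma$ has linear growth via its Lipschitz continuity. These are the three ingredients I will combine.

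First I would apply It\^{o}'s formula to $\|X^{\e,u^\e,\phi^\e}(t)\|^2$, using the equation for $X^{\e,u^\e,\phi^\e}$ in \eqref{c-sde}. This gives
\bea\ad
\|X^{\e,u^\e,\phi^\e}(t)\|^2 = \|x_0\|^2 + 2\int_0^t \langle X^{\e,u^\e,\phi^\e}(s), b(\Xe(s),\Ye(s))\rangle ds \\
\aad \quad + 2\sqrt{\e}\int_0^t \langle X^{\e,u^\e,\phi^\e}(s), \sigma(\Xe(s),\Ye(s)) dW(s)\rangle \\
\aad \quad + 2\sqrt{\e}h(\e) \int_0^t \langle X^{\e,u^\e,\phi^\e}(s), \sigma(\Xe(s),\Ye(s)) u^\e(s)\rangle ds \\
\aad \quad + \e \int_0^t \trace(\sigma\sigma^\top)(\Xe(s),\Ye(s)) ds.
\eea
The drift term is bounded by $K(1 + \|X^{\e,u^\e,\phi^\e}\|^2)$ using boundedness of $b$, and the quadratic variation term by $K(1+\|X^{\e,u^\e,\phi^\e}\|^2)$ using linear growth of $\sigma$. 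For the control term, the inequality $2ab \leq a^2 + b^2$ together with linear growth of $\sigma$ yields
\bea\ad
2\sqrt{\e}h(\e) |\langle X^{\e,u^\e,\phi^\e}, \sigma u^\e\rangle| \leq \|X^{\e,u^\e,\phi^\e}\|^2 + \e h^2(\e) \|\sigma\|^2 \|u^\e\|^2 \leq K(1+\|u^\e\|^2)(1+\|X^{\e,u^\e,\phi^\e}\|^2),
\eea
where I used $\e h^2(\e) \leq K$. Combining, for any stopping time or deterministic time $t\leq T$,
\bea\ad
\sup_{r\leq t}\|X^{\e,u^\e,\phi^\e}(r)\|^2 \leq C + K\int_0^t (1+\|u^\e(s)\|^2)(1+\sup_{r\leq s}\|X^{\e,u^\e,\phi^\e}(r)\|^2) ds + 2\sqrt{\e} \sup_{r\leq t} \Big|\int_0^r \langle X^{\e,u^\e,\phi^\e}, \sigma dW\rangle\Big|.
\eea

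Next I would take expectation and handle the martingale via Burkholder--Davis--Gundy: with $Z(t) := \EE\sup_{r\leq t}\|X^{\e,u^\e,\phi^\e}(r)\|^2$,
\bea\ad
2\sqrt{\e}\, \EE \sup_{r\leq t}\Big|\int_0^r \langle X^{\e,u^\e,\phi^\e},\sigma dW\rangle\Big| \leq K\sqrt{\e}\, \EE \Big(\int_0^t \|X^{\e,u^\e,\phi^\e}\|^2 \|\sigma\|^2 ds\Big)^{1/2} \\
\aad \leq K\sqrt{\e}\, \EE\Big(\sup_{r\leq t}\|X^{\e,u^\e,\phi^\e}(r)\|\cdot \Big(\int_0^t(1+\|X^{\e,u^\e,\phi^\e}\|^2)ds\Big)^{1/2}\Big) \leq \tfrac{1}{2}Z(t) + K\e \int_0^t (1+Z(s))ds,
\eea
after using Young's inequality to absorb $\tfrac{1}{2}Z(t)$ on the left. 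This produces
\bea\ad
Z(t) \leq C + K\int_0^t \EE[(1+\|u^\e(s)\|^2)(1+\sup_{r\leq s}\|X^{\e,u^\e,\phi^\e}(r)\|^2)]ds.
\eea

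The main point is how to close this Gronwall-type inequality when $u^\e$ is random. Since $\int_0^T \|u^\e(s)\|^2 ds \leq 2M$ holds \emph{pathwise}, I would work pathwise: rewriting the inequality before taking expectations as
\bea\ad
\sup_{r\leq t}\|X^{\e,u^\e,\phi^\e}(r)\|^2 \leq C + K\int_0^t (1+\|u^\e(s)\|^2)\sup_{r\leq s}\|X^{\e,u^\e,\phi^\e}(r)\|^2 ds + \text{(BDG-controlled term)},
\eea
Gronwall's lemma with the random coefficient $1+\|u^\e(s)\|^2$ having integral bounded by $T+2M$ almost surely yields an exponential factor $\exp(K(T+2M))$ that is deterministic and finite. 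Taking expectation afterwards, together with the BDG estimate, closes the argument and produces a bound depending only on $M$, $T$, $\|x_0\|$, and the bounds from (H1)--(H3), uniformly in $\e\in(0,1)$ and in the admissible controls.

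The hardest technical point is the interplay between the random coefficient $\|u^\e(s)\|^2$ and the supremum process inside the expectation; the cleanest resolution is to apply Gronwall's inequality pathwise \emph{before} taking expectations, relying crucially on the almost sure bound $\int_0^T\|u^\e\|^2 ds \leq 2M$. The cost bound $L_{T,2}(\phi^\e)\leq M\e h^2(\e)$ on the jump control plays no role here since $Y^{\e,u^\e,\phi^\e}$ enters only through bounded coefficients and the estimate does not require any information about the fast process beyond its range in $\LL$.
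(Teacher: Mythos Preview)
Your proposal is correct and follows the standard It\^{o}--BDG--Gronwall route; the paper itself omits the proof entirely and simply cites \cite[Lemma 3.1]{BDG18}, whose argument is essentially the one you outline. Your two key observations---that the pathwise bound $\int_0^T\|u^\e(s)\|^2\,ds\le 2M$ lets you run Gronwall with a random but uniformly integrable coefficient, and that the constraint $L_{T,2}(\phi^\e)\le M\e h^2(\e)$ is irrelevant because $Y^{\e,u^\e,\phi^\e}$ enters only through coefficients that are bounded (for $b$) or linearly growing in $x$ uniformly over the finite set $\LL$ (for $\sigma$)---are exactly right. The only cosmetic point is that your write-up oscillates between ``take expectation then Gronwall'' and ``Gronwall pathwise then take expectation''; the clean order is: It\^{o} $\to$ take $\sup_{r\le t}$ $\to$ pathwise Gronwall (exponential factor $e^{K(T+2M)}$ deterministic) $\to$ take expectation $\to$ BDG with a Young split whose constant absorbs that exponential factor $\to$ a second, now standard, Gronwall in $Z(t)=\EE\sup_{r\le t}\|\Xe(r)\|^2$.
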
 
\begin{proof}
The proof can be found in \cite[Lemma 3.1]{BDG18}, thus details are omitted.
\end{proof}

Before demonstrating the uniform boundedness and tightness of $\{\etae\}$, we first present the following key lemma concerning $\etae_1$.

\begin{lem}\label{lem:eta1}
Under conditions (H1)-(H3), let $(u^\e,\phi^\e) \in \CU_b^+$ and $\beta>0$, define
\beq{N-psi}\barray\disp
\!\!\! N^{\psi^\e}_{\beta}(t)\ad\!\!\! :=
\int_{[0,T]\times [0,\zeta]} \sum_{(i,j)\in \TT}
\big[\Phi(\Xe(s), j)-\Phi(\Xe(s),i)\big] \\
\aad\!\!\!\!\!\! \times \indi_{\{\Ye(s-)=i\}}\indi_{ E_{ij}(\Xe(s))}(z) \psi_{ij}^\e(s,z) \indi_{\{|\psi_{ij}^\e(s,z)|\leq \beta/(\sqe h(\e))\}}\la_\zeta(dz)ds.
\earray\eeq 
Then for any $m\geq 0$, there exists a constant $K=K(T,\zeta,M,\beta,|\LL|)>0$ (independent of $m$) such that 
\beq{eta1-m}\barray\ad 
\EE \Big( \sup_{t\in [0,T]} \big\|\etae_1(t)- N_\beta^{\psi^\e}(t)\big\|\Big) \\
\aad \leq \frac{K}{m}\Big(\frac{T}{\sqe h(\e)}+\sqrt{\zeta T} |\LL|^2 \sqrt{M \kappa_2(\beta)}\Big)+\frac{K m \e^2}{\sqe h(\e)}+K\Big(\e+\frac{\e}{h(\e)}+\frac{1}{h(\e)}+\sqe h(\e)\Big).
\earray\eeq
It follows that there exists a $m_0(\e)>0$ $($for example $m_0(\e)=1/(\e^{1/2+\varrho} h(\e)), \forall \varrho \in (0,1))$ and $\e_1>0$ such that for $m\geq m_0(\e)$ and $\e \in (0,\e_1)$, 
\beq{eta1-bdd}
\EE\Big(\sup_{t\in [0,T]}\|\etae_1(t)\|\Big) \leq K+1,
\eeq
where $K$ is a constant independent of $m$. Moreover, for $h\in (0,T)$ and $0 <t_2 \leq t_1+h\leq T$, we have 
\beq{eta1-tight}
\EE\Big(\sup_{0\leq t_1\leq t_2 \leq T, t_2\leq t_1+h}\big\|\etae_1(t_2)-\etae_1(t_1)\big\| \Big) \leq  K h^{1/2}.
\eeq
\end{lem}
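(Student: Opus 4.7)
The strategy is to use the Poisson equation \eqref{pq} $Q(x)\Phi(x,\cdot)(i)=-\wdt b(x,i)$ (with $\wdt b(x,i):=b(x,i)-\bar b(x)$) to absorb the singular factor $1/(\sqrt{\e}h(\e))$ appearing in $\etae_1$. Since $\Phi$ is only Lipschitz in $x$ under (H1)--(H3), one cannot directly apply It\^o's formula to $\Phi(\Xe,\Ye)$; instead, I work with the mollified $\Phi_m$ from \eqref{Phi-m}, whose regularity is controlled by \lemref{lem:app}, and then estimate the approximation error.

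The plan is as follows. First, apply It\^o's formula to $\Phi_m(\Xe(t),\Ye(t))$ under the controlled dynamics \eqref{c-sde}. The jump generator of the fast chain produces $\e^{-1}\int_0^t Q(\Xe)\Phi_m(\Xe,\cdot)(\Ye)\,ds$ plus, after writing $\phi^\e=1+\sqrt{\e}h(\e)\psi^\e$, a piece of the form $\frac{h(\e)}{\sqrt{\e}}$ times a jump integrand involving $\Phi_m$ and $\psi^\e$, plus the compensated Poisson martingale. The Poisson equation identifies the first generator term with $-\e^{-1}\int_0^t \wdt b(\Xe,\Ye)\,ds$ modulo the error $\e^{-1}\int_0^t Q(\Phi-\Phi_m)(\Xe,\cdot)(\Ye)\,ds$. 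Rearranging and dividing by $\sqrt{\e}h(\e)$ expresses $\etae_1(t)$ as $N_\beta^{\psi^\e}(t)$ plus (a)~a boundary term $-\frac{\sqrt{\e}}{h(\e)}[\Phi_m(\Xe(t),\Ye(t))-\Phi_m(x_0,y_0)]$; (b)~drift terms of order $\sqrt{\e}/h(\e)$ and $\e$ coming from $\nabla_x\Phi_m\!\cdot\! b$ and $\nabla_x\Phi_m\,\sigma u^\e$, handled by \lemref{lem:X-bdd} and $L_{T,1}(u^\e)\le M$; (c)~an It\^o correction of order $m\e^{3/2}/h(\e)=m\e^2/(\sqrt{\e}h(\e))$ from $\nabla_x^2\Phi_m\,\sigma\sigma^\top$, using the bound $\|\nabla_x^2\Phi_m\|_\infty\le Km$; (d)~a stochastic-integral term of order $\e/h(\e)$ and a Poisson-martingale term of order $\sqrt{\e}/h(\e)$, both controlled by BDG; (e)~the Poisson-error term of order $T/(m\sqrt{\e}h(\e))$ via $\|\Phi-\Phi_m\|_\infty\le K/m$; (f)~the $\Phi$-vs-$\Phi_m$ discrepancy inside the $\psi^\e$-integral, estimated by $\|\Phi-\Phi_m\|_\infty\le K/m$ and Cauchy--Schwarz with \eqref{psi-sq}, contributing $\sqrt{T\zeta}|\LL|^2\sqrt{M\kappa_2(\beta)}/m$; and (g)~the tail $\indi_{\{|\psi^\e|>\beta/(\sqrt{\e}h(\e))\}}$, controlled via \eqref{psi-abs} by $KM\sqrt{\e}h(\e)\kappa_1(\beta)$. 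Summing yields \eqref{eta1-m}.

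To deduce the uniform bound \eqref{eta1-bdd}, plug in $m_0(\e)=\e^{-1/2-\varrho}/h(\e)$: this makes $1/(m_0\sqrt{\e}h(\e))=\e^\varrho$ and $m_0\e^2/(\sqrt{\e}h(\e))=\e^{1-\varrho}/h^2(\e)$ both vanish as $\e\to 0$, while the remaining error terms in \eqref{eta1-m} stay bounded. For $N_\beta^{\psi^\e}$ itself, the boundedness of $\Phi$ (Remark~\ref{rem:bdd-Phi}) and Cauchy--Schwarz with \eqref{psi-sq} give $\EE\sup_t\|N_\beta^{\psi^\e}(t)\|\le 2K|\TT|(T\zeta M\kappa_2(\beta))^{1/2}$, uniformly in $\e$. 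For the modulus estimate \eqref{eta1-tight}, apply the identical decomposition to the increment $\etae_1(t_2)-\etae_1(t_1)$: deterministic integrals over $[t_1,t_2]$ are $O(h)$; BDG yields $O(h^{1/2})$ for the Itô and compensated Poisson martingales; and the increment $N_\beta^{\psi^\e}(t_2)-N_\beta^{\psi^\e}(t_1)$ is bounded by $2K|\TT|(h\zeta M\kappa_2(\beta))^{1/2}=O(h^{1/2})$ by Cauchy--Schwarz on $[t_1,t_2]\times[0,\zeta]$.

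The main obstacle is the competing role of the mollification parameter $m$: $\|\Phi-\Phi_m\|_\infty\le K/m$ and $\|\nabla_x^2\Phi_m\|_\infty\le Km$ pull in opposite directions, and the Poisson-equation error is further amplified by $1/(\sqrt{\e}h(\e))$. Finding a single $m=m_0(\e)$ that forces every error term to vanish as $\e\to 0$ while preserving the boundedness of $N_\beta^{\psi^\e}$ is the heart of the argument. The quadratic-type estimates in \lemref{lem:ineq-2} play an equally important role on the control side, since they are what convert the $L^1$-type bound \eqref{psi-abs} on the tail of $\psi^\e$ into an $L^2$-type bound \eqref{psi-sq} on the bulk; without this dichotomy one could not close the tightness argument while retaining the polynomial scaling in $\beta$.
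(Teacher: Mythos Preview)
Your proposal is correct and follows essentially the same route as the paper: mollify $\Phi$ to $\Phi_m$, apply It\^o's formula to $\Phi_m(\Xe,\Ye)$, use the Poisson equation to rewrite $\etae_1$, and estimate each of the resulting pieces exactly as you describe (the paper uses Doob's inequality and the Lenglart--Lepingle--Pratelli inequality rather than BDG for the two martingale terms, but the orders coincide). The choice $m_0(\e)=\e^{-1/2-\varrho}/h(\e)$ and the Cauchy--Schwarz bound on $N_\beta^{\psi^\e}$ via \eqref{psi-sq} are also identical to the paper's argument.
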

\begin{proof}
\thmref{thm:Phi} implies that for the function $\wdt b$, there exists a unique solution $\Phi(x,i)$ for the Poisson equation \eqref{pq} with $\Phi(x,i)$ being bounded and Lipschitz continuous with respect to $x$. To apply the It\^{o} formula in \cite[p.29]{YZ10}, we work with the approximated sequence $\Phi_m(\cdot,\cdot)$ defined in \eqref{Phi-m} such that \lemref{lem:app} holds. Then 
\beq{Ito-Phi}\barray\ad 
\Phi_m(X^{\e, u^\e, \phi^\e}(t), Y^{\e, u^\e, \phi^\e}(t))-\Phi_m(x_0, y_0)\\
\aad =\int_0^t \nabla \Phi_m(X^{\e, u^\e, \phi^\e}(s), Y^{\e, u^\e, \phi^\e}(s)) b(X^{\e, u^\e, \phi^\e}(s), Y^{\e, u^\e, \phi^\e}(s))ds\\
\aad + \sqe h(\e) \int_0^t \nabla \Phi_m(\Xe(s), \Ye(s))  \sg(\Xe(s), \Ye(s))u^\e(s)  ds\\ 
\aad + \frac{\e}{2} \int_0^t \trace\Big[ \nabla^2 \Phi_m(\Xe, \Ye)(\sg\sg^\top)(\Xe(s),\Ye(s))\Big]ds \\
\aad + \sqe \int_0^t \nabla \Phi_m(\Xe(s), \Ye(s)) \sg(\Xe(s), \Ye(s))dW(s)\\
\aad +\frac{1}{\e}\int_0^t Q(\Xe(s)) \Phi_m(\Xe(s), \cdot)(\Ye(s)) ds  \\
\aad + \frac{1}{\e}\int_{[0,T]\times [0,\zeta]} \sum_{(i,j)\in \TT} \big[\Phi_m(\Xe(s), j)-\Phi_m(\Xe(s),i)\big]\\
\aad \qquad\qquad\qquad\quad  \times \indi_{\{\Ye(s)=i\}} \indi_{E_{ij}(\Xe(s))}(z) \big[\phi_{ij}^\e(s,z)-1\big]\la_\zeta(dz)ds \\
\aad + \int_{[0,T]\times [0,\zeta]} \sum_{(i,j)\in \TT} \big[\Phi_m(\Xe(s), j)-\Phi_m(\Xe(s),i)\big] \\
\aad \qquad\qquad\qquad\quad  \times \indi_{\{\Ye(s)=i\}} \indi_{E_{ij}(\Xe(s))}(z) \wdt N_{ij}^{\e^{-1}\phi_{ij}^\e}(dz \times ds),
\earray\eeq
where 
$\wdt N_{ij}^{\e^{-1}\phi_{ij}^\e}(dz\times ds )= N_{ij}^{\e^{-1} \phi_{ij}^\e}(dz \times ds ) - \e^{-1}\phi_{ij}^\e(s,z)dz ds$. For the six line in \eqref{Ito-Phi}, we used the definition of the generator $Q(x)$ and $E_{ij}(x)$ to obtain
\bea
Q(x)\Phi_m(x,\cdot)(y)
\ad = \sum_{j\in \LL}(\Phi_m(x,j)-\Phi_m(x,y)) q_{yj}(x) \\
\aad = \sum_{(i,j)\in \TT} (\Phi_m(x,j)-\Phi_m(x,i)) \indi_{\{y=i\}} \int_{[0,\zeta]}\indi_{E_{ij}(x)}(z)\la_\zeta(dz). 
\eea
Let $\psi_{ij}^\e(s,z):=(\phi_{ij}^\e(s,z)-1)/\sqe h(\e)$ and define $\wdt M_m^{\phi^\e}$ as following:
\beq{M-wdt}\barray\disp
\wdt M_m^{\phi^\e}(t): = \int_{[0,T]\times [0,\zeta]} \ad \sum_{(i,j)\in \TT}\big[\Phi_m(\Xe(s),j)-\Phi_m(\Xe(s),i)\big] \\
\aad \times \indi_{\{\Ye(s-)=i\}} \indi_{E_{ij}(\Xe(s))}(z) \wdt N_{ij}^{\e^{-1}\phi_{ij}^\e}(dz\times ds).
\earray\eeq

To proceed, we rewrite $\eta_1^{\e,u^\e, \phi^\e}$ in terms of the solution of the Poisson equation \eqref{pq}. Using \eqref{pq} with $x$ being $\Xe(s)$ and $i$ being $\Ye(s)$, we have 
\beq{eta1-1}\barray
\eta_1^{\e,u^\e,\phi^\e}(t) 
\ad  = \frac{1}{\sqe h(\e)} \int_0^t b(\Xe(s), \Ye(s)) - \bar b(\Xe(s))ds \\
\ad = \frac{1}{\sqe  h(\e)} \int_0^t - Q(\Xe(s)) \Phi(\Xe(s),\cdot) (\Ye(s)) ds \\
\ad = \frac{1}{\sqe h(\e)} \int_0^t -Q(\Xe(s)) [\Phi-\Phi_m](\Xe(s),\cdot)(\Ye(s))ds \\
\aad\quad + \frac{1}{\sqe h(\e)} \int_0^t - Q(\Xe(s)) \Phi_m(\Xe(s),\cdot)(\Ye(s))ds. 
\earray\eeq
For the integral in the last line above, it appears in the six line of \eqref{Ito-Phi} with different constant coefficient. To rewrite it in \eqref{eta1-1}, we multiply $\e$ and divide $\sqe h(\e)$ to both sides of \eqref{Ito-Phi}, and then rearrange terms (moving the integral in the six line of \eqref{Ito-Phi} with new coefficients to the left-hand side of equality \eqref{Ito-Phi} and moving all other terms to its right-hand side). We can get
\beq{eta1-2}\barray\ad
\frac{1}{\sqe h(\e)} \int_0^t -Q(\Xe(s))\Phi_m(\Xe(s), \cdot)(\Ye(s))ds \\
\aad = -\frac{\sqe}{h(\e)} (\Phi_m(\Xe(t), \Ye(t)) - \Phi_m(x_0, y_0)) \\
\aad\quad + \frac{\sqe}{ h(\e)} \int_0^t \nabla \Phi_m (\Xe(s), \Ye(s)) b(\Xe(s), \Ye(s))ds  \\
\aad \quad + \e \int_0^t \nabla \Phi_m(\Xe(s), \Ye(s)) \sg(\Xe(s), \Ye(s)) u^\e(s)ds \\
\aad \quad + \frac{\e^{3/2}}{2 h(\e)} \int_0^t \text{Tr}[\nabla^2 \Phi_m(\Xe(s), \Ye(s)) (\sg\sg^\top)(\Xe(s), \Ye(s))] ds \\
\aad \quad + \frac{\e}{h(\e)} \int_0^t  \nabla \Phi_m(\Xe(s), \Ye(s)) \sg(\Xe(s), \Ye(s))dW(s) \\
\aad\quad + \int_0^t \sum_{(i,j)\in \TT} \big[\Phi_m(\Xe(s),j) - \Phi_m(\Xe(s), i)\big]  \\
\aad \qquad \qquad \qquad \times \indi_{\{\Ye(s-)=i\}} \indi_{E_{ij}(\Xe(s))}(z) \psi_{ij}^\e(s,z) \la_\zeta(dz)ds + \frac{\sqe}{h(\e)} \wdt{M}_m^{\phi^\e}(t)
\earray\eeq
As $\e\to 0$, from \eqref{dev-h}, we note that all terms except the second-to-last term on the right-hand side of \eqref{eta1-2} converges to zero. Moreover, as we discussed in second paragraph above \secref{sec:pre}, $\psi^\e$ will not converge in the $L^2$ sense as $\e\to 0$. We split $\psi^\e(s,z)$ into two parts: $\psi^\e(s,z)\indi_{\{|\psi^\e(s,z)|\leq \beta/(\sqe h(\e))\}}$ and $\psi^\e(s,z)\indi_{\{|\psi^\e(s,z)|>\beta/(\sqe h(\e))\}}$ for some $\beta>0$. Then, we have the decomposition
\beq{eta1-3}\barray\ad
\int_0^t \sum_{(i,j)\in \TT} \Big[\Phi_m(\Xe(s), j) - \Phi_m(\Xe(s), i)\Big] \\
\aad \qquad \qquad \times \indi_{\{\Ye(s-)=i\}} \indi_{E_{ij}(\Xe(s))}(z) \psi_{ij}^\e(s,z) \la_\zeta(dz)ds \\
\aad = \int_0^t \sum_{(i,j)\in \TT}\Big[\Phi_m(\Xe(s),j)-\Phi_m(\Xe(s),i)\Big] \\
\aad \qquad \qquad \times \indi_{\{\Ye(s-)=i\}} \indi_{E_{ij}(\Xe(s))}(z) \psi_{ij}^\e(s,z) \indi_{\{|\psi^\e(s,z)| > \beta/(\sqe h(\e))\}}\la_\zeta(dz)ds \\
\aad\quad + \int_0^t \sum_{(i,j)\in \TT}\Big[\Phi_m(\Xe(s),j)-\Phi_m(\Xe(s),i)\Big] \\
\aad \qquad \qquad \times \indi_{\{\Ye(s-)=i\}} \indi_{E_{ij}(\Xe(s))}(z) \psi_{ij}^\e(s,z) \indi_{\{|\psi^\e(s,z)| \leq  \beta/(\sqe h(\e))\}}\la_\zeta(dz)ds \\
\aad = \int_0^t \sum_{(i,j)\in \TT}\Big[\Phi_m(\Xe(s),j)-\Phi_m(\Xe(s),i)\Big] \\
\aad \qquad \qquad \times \indi_{\{\Ye(s-)=i\}} \indi_{E_{ij}(\Xe(s))}(z) \psi_{ij}^\e(s,z) \indi_{\{|\psi^\e(s,z)| > \beta/(\sqe h(\e))\}}\la_\zeta(dz)ds \\
\aad \quad + N_{m,\beta}^{\psi^\e}(t) - N_\beta^{\psi^\e}(t) + N_\beta^{\psi^\e}(t)
\earray
\eeq
where $N_{m,\beta}^{\psi^\e}(t)$ is defined as 
\beq{Nm-psi}\barray\disp
\ad\!\!\!\!\!\! N^{\psi^\e}_{m,\beta}(t):=
\!\! \int_{[0,T]\times [0,\zeta]} \sum_{(i,j)\in \TT}
\big[\Phi_m(\Xe(s), j)-\Phi_m(\Xe(s),i)\big] \\
\aad\qquad\!\! \times \indi_{\{\Ye(s-)=i\}}\indi_{ E_{ij}(\Xe(s))}(z) \psi_{ij}^\e(s,z) \indi_{\{|\psi_{ij}^\e(s,z)|\leq \beta/(\sqe h(\e))\}}\la_\zeta(dz)ds.
\earray\eeq 
and $N_\beta^{\psi^\e}$ is defined in \eqref{N-psi}, which replace $\Phi_m$ by $\Phi$ in \eqref{Nm-psi}. The purpose of introducing $N_\beta^{\psi^\e}$ in the final equality of \eqref{eta1-3} is to obtain a limit that no longer depends on $m$.

Consequently, combining \eqref{eta1-1},\eqref{eta1-2},\eqref{eta1-3} yields
\beq{eta1}\barray \ad 
\eta_1^{\e, u^\e, \phi^\e}(t) \\
\aad = \frac{1}{\sqe h(\e)}\int_0^t -Q(\Xe(s))\big[(\Phi-\Phi_m)(\Xe(s),\Ye(s))\big]ds\\ 
\aad\quad -\frac{\sqe}{ h(\e)}\big(\Phi_m(\Xe(t), \Ye(t))-\Phi_m(x_0,y_0)\big) \\
\aad\quad + \frac{\sqe}{ h(\e)} \int_0^t \nabla \Phi_m(\Xe(s), \Ye(s)) b(\Xe(s), \Ye(s))ds \\
\aad\quad + \e \int_0^t \nabla \Phi_m(X^{\e, u^\e, \phi^\e}(s), Y^{\e, u^\e, \phi^\e}(s)) \sg(\Xe(s),\Ye(s))u^
\e(s)ds  \\
\aad\quad + \frac{\e^{3/2}}{2h(\e)}\int_0^t \trace \big[\nabla^2  \Phi_m(\Xe(s), \Ye(s)) (\sg\sg^\top)(\Xe(s), \Ye(s))\big]ds \\
\aad\quad + \frac{\e}{h(\e)}\int_0^t \nabla \Phi_m(\Xe(s), \Ye(s)) \sg(\Xe(s), \Ye(s))dW(s)\\
\aad\quad + \int_0^t \sum_{(i,j)\in \TT}\big[\Phi_m(\Xe(s),j)-\Phi_m(\Xe(s),i)\big] \\
\aad\qquad\qquad\quad\times \indi_{\{\Ye(s-)=i\}} \indi_{E_{ij}(\Xe(s))}(z) \psi_{ij}^\e(s,z)\indi_{\{|\psi_{ij}^\e(s,z)|>\beta/(\sqe h(\e))\}} \la_\zeta(dz)ds \\
\aad\quad + N_{m,\beta}^{\psi^\e}(t) - N_{\beta}^{\psi^\e}(t)+ N_{\beta}^{\psi^\e}(t)+  \frac{\sqe}{h(\e)} \wdt M_{m}^{\phi^\e}(t).
\earray
\eeq 

We now provide estimates for terms on the right-hand side of equality  \eqref{eta1}. 
From \lemref{lem:app} and assumption (H3), we have
\bea\ad\!\!\! 
\EE\bigg(\sup_{t\in [0,T]}\Big\|\frac{1}{\sqe h(\e)} \int_0^t Q(\Xe(s))\big[(\Phi-\Phi_m)(\Xe(s),\Ye(s))\big]ds \Big\|\bigg)\\
\aad \leq \frac{K T}{m \sqe h(\e)} \EE\bigg(1+\sup_{t\in [0,T]}\|\Xe(t)\|\bigg) \leq \frac{K T}{m \sqe h(\e)}.
\eea
Since $u^\e\in \CS_2^M$, the linear growth condition of $\sg$, the uniform boundedness of $\nabla \Phi_m$, and  the H\"{o}lder inequality yield
\bea\ad\!\!\!\!
\EE\bigg(\sup_{t\in [0,T]} \Big\|\e \int_0^t \nabla \Phi_m (\Xe(s),\Ye(s))\sg(\Xe(s),\Ye(s)) u^\e(s)ds\Big\| \bigg) \\
\aad\!\!\!\! \leq  K\, \e\, \EE\bigg(1+\sup_{s\in [0,T]}\|\Xe(s)\|\bigg)\int_0^T u^\e(s)ds \leq K \sqrt{MT}\e.
\eea
Moreover, the boundedness of $\nabla^2 \Phi_m$ gives
\bea\ad 
\EE\Big(\sup_{t\in [0,T]} \Big\|\frac{\e^{3/2}}{2h(\e)} \int_0^t\text{Tr}\big[\nabla^2 \Phi_m(\Xe(s),\Ye(s)\\
\aad \qquad\qquad\qquad\qquad\qquad \quad\quad \times (\sg\sg^\top)(\Xe(s), \Ye(s))\big] ds \Big\|\Big) \\
\aad \leq \frac{K  m \e^2}{\sqe h(\e)} \EE\Big(1+\sup_{s\in [0,T]}\|\Xe(s)\|^2\Big) \leq \frac{K m \e^2}{\sqe h(\e)}.
\eea
By the Lenglart-Lepingle-Pratelli inequality, \lemref{lem:app}, and (H1), we have
\bea\ad 
\EE\bigg(\sup_{t\in [0,T]}\Big\|\frac{\sqe}{h(\e)}\wdt  M_m^{\phi^\e}(t)\Big\|\bigg) \\
\aad \leq  \frac{K}{h(\e)} \EE\bigg(\sup_{t\in [0,T]}\|\Phi_m(\Xe(t),\cdot)\|_\infty\bigg) \bigg(\sum_{(i,j)\in \TT} \int_{[0,T]\times [0,\zeta]} \phi_{ij}^\e(s,z)\la_{\zeta}(dz)ds\bigg)^{1/2} \\
\aad \leq \frac{K}{h(\e)}\bigg( \sum_{(i,j)\in \TT} \int_{[0,T]\times [0,\zeta]} (e+\ell(\phi_{ij}^\e(s,z)))\la_\zeta(dz)ds \bigg)^{1/2} \\
\aad \leq  \frac{K}{h(\e)}  \big(\zeta T e+ M\e h^2(\e)\big)^{1/2},
\eea
for some $K>0$, where the second inequality uses inequality (i) in \lemref{lem:ineq} and last inequality is because $\phi^\e\in S_{+,\e}^M$. Moreover, the Doob martingale inequality, the uniform boundedness of $\Phi_m$, and the linear growth condition of $\sg$ imply that
\bea\ad\!\!\!\!\!\!\! 
\EE\bigg(\sup_{t\in [0,T]}\! \Big\|\frac{\e}{h(\e)}\!\! \int_0^t\! \nabla \Phi_m(\Xe(s), \Ye(s))\sg(\Xe(s), \Ye(s))dW(s)\Big\| \bigg) \\
\aad\!\!\!\!\!\!\! \leq \frac{\e}{h(\e)} \bigg( \int_0^T \EE \big\|\nabla \Phi_m(\Xe(s), \Ye(s)) \sg(\Xe(s), \Ye(s))\big\|^2 ds \bigg)^{1/2} \\
\aad\!\!\!\!\!\!\!  \leq \frac{K \sqrt{T} \e }{h(\e)} \EE \bigg( 1+ \sup_{t\in [0,T]} \|\Xe(t)\|\bigg) \leq \frac{K\sqrt{T} \e}{h(\e)}.
\eea
In the light of \eqref{psi-abs} in \lemref{lem:ineq-2}, we have
\bea\ad\!\!\!\!\!\!\!
\EE\bigg(\sup_{t\in [0,T]}\Big\|\int_{[0,T]\times [0,\zeta]} \sum_{(i,j)\in \TT} [\Phi_m(\Xe(s),j)-\Phi_m(\Xe(s),i)] \\
\aad\quad\qquad \times \indi_{\{\Ye(s-)=i\}} \indi_{E_{ij}(\Xe(s))}(z) \psi_{ij}^\e(s,z) \indi_{\{|\psi_{ij}^\e(s,z)| > \beta/(\sqe h(\e))\}} \la_\zeta(dz)ds \Big\|\bigg)\\
\aad\!\!\!\!\!\!\! \leq K \sum_{(i,j)\in \TT} \int_{[0,T]\times [0,\zeta]} |\psi_{ij}^\e(s,z)|\indi_{\{|\psi_{ij}^\e(s,z)|> \beta/(\sqe h(\e))\}} \la_\zeta(dz)ds \\
\aad\!\!\!\!\!\!\! \leq  K |\LL|^2 M \sqe h(\e) \kappa_1(\beta).
\eea
Furthermore, \lemref{lem:app}, and H\"{o}lder inequality  implies
\beq{Nm-N}\barray\ad 
\EE\bigg(\sup_{t\in [0,T]}\|N_{m,\beta}^{\psi^\e}(t)-N_\beta^{\phi^\e}(t)\|\bigg)\\
\aad \leq \EE \int_{[0,T]\times [0,\zeta]} \sum_{(i,j)\in \TT} \big\|(\Phi_m-\Phi)(\Xe(s),j)-(\Phi_m-\Phi)(\Xe(s),i)\big\| \\
\aad \qquad \times  \indi_{\{\Ye(s-)=i\}} \indi_{E_{ij}(\Xe(s))}(z) |\psi_{ij}^\e(s,z)| \indi_{\{|\psi_{ij}^\e(s,z)|\leq \beta \sqe h(\e)\}} \la_\zeta(dz)ds\\
\aad \leq \frac{K}{m} \sum_{(i,j)\in \TT} \int_{[0,T]\times [0,\zeta]} |\psi_{ij}^\e(s,z)| \indi_{\{|\psi_{ij}(s,z)|\leq \beta/\sqe h(\e)\}} \la_\zeta(dz)ds \\
\aad \leq \frac{K\sqrt{\zeta T}}{m} \sum_{(i,j)\in \TT} \bigg(\int_{[0,T]\times [0,\zeta]} |\psi_{ij}^\e(s,z)|^2 \indi_{\{|\psi_{ij}^\e(s,z)|\leq \beta/(\sqe h(\e))\}}\la_\zeta(dz)ds\bigg)^{1/2}\\
\aad \leq \frac{K \sqrt{\zeta T}}{m}|\LL|^2 \sqrt{M \kappa_2(\beta)}.
\earray\eeq

Combining all above estimates, the boundedness of $\Phi_m$, and \eqref{eta1} implies that there exists a constant $K=K(T,\zeta, M, \beta, |\LL|)$ depending on $\zeta,T,M,\beta,|\LL|$, independent of $m$ such that
\bea\ad 
\EE\bigg(\sup_{t\in [0,T]}\big\|\eta_1^{\e, u^\e, \phi^\e} (t) -   N_{\beta}^{\psi^\e}(t)\big\|\bigg) \\
\aad \leq \frac{K}{m}\bigg(\frac{T}{\sqe h(\e)}+\sqrt{\zeta T}|\LL|^2 \sqrt{M \kappa_2(\beta)}\bigg)+ \frac{KT m \e^2}{\sqe h(\e)}+ K\bigg(\e+\frac{\e}{h(\e)}+\frac{1}{h(\e)}+\sqe h(\e)\bigg).
\eea
Thus for $\e>0$, there exists a $m_0(\e)>0$ such that for $m\geq m_0(\e)$,
\beq{eta-N-beta}
\EE\bigg(\sup_{t\in [0,T]} \|\etae(t)-N_\beta^{\psi^\e}(t)\|\bigg) \leq K o(\e) \to 0, \quad\text{as } \e \to 0.
\eeq 
For example, we can take $m_0(\e)=1/(\e^{1/2+\varrho} h(\e)), \forall\, \varrho \in (0,1)$. For $N_{\beta}^{\psi^\e}$, similar to the computation in \eqref{Nm-N}, we have
\beq{E-N-beta}\barray\ad  
\EE\Big(\sup_{t\in [0,T]}\|N_{\beta}^{\psi^\e}(t)\| \Big) \leq K  \sum_{(i,j)\in \TT} \int_{[0,T]\times [0,\zeta]}  |\psi_{ij}^\e(s,z)| \indi_{\{|\psi_{ij}^\e(s,z)| \leq \beta/(\sqe h(\e))\}} \la_\zeta(dz)ds \\
\aad \leq K \sqrt{\zeta T}|\LL|^2 \bigg(\int_{[0,T]\times [0,\zeta]} |\psi_{ij}^\e(s,z)|^2  \indi_{\{|\psi_{ij}^\e(s,z)|\leq \beta/(\sqe h(\e))\}} \la_\zeta(dz)ds \bigg)^{1/2} \\
\aad \leq  K \sqrt{\zeta T} |\LL|^2 \sqrt{M \kappa_2(\beta)}<\infty.
\earray\eeq
Then the triangle inequality, \eqref{eta-N-beta}, and \eqref{E-N-beta} implies there exists $\e_1>0$ such that for all $\e\in (0,\e_1)$,
\bea\ad 
\EE\bigg(\sup_{t\in [0,T]}\|\etae_1(t)\|\bigg)\\
\aad  \leq  \EE\bigg(\sup_{t\in [0,T]}\|\etae_1(t)-N_\beta^{\psi^\e}(t)\|\bigg) +\EE\bigg(
\sup_{t\in [0,T]}\|N_{\beta}^{\psi^\e}(t)\|\bigg) \leq K+1,
\eea
where $K>0$ is a constant independent of $m$. To prove \eqref{eta1-tight}, we still apply \eqref{Ito-Phi} to rewrite $\etae_1(t_2)-\etae_1(t_1)$ for $t_2\leq t_1+h$ and $0\leq t_1\leq t_2\leq T$. We then use the same estimation method for \eqref{eta1-bdd} to derive \eqref{eta1-tight}. Thus, the proof is complete.
\end{proof}

Building on estimates for $\etae_1$ in \lemref{lem:eta1}, the following lemmas aim to establish the uniform boundedness and  tightness of sequence $\{\eta^{\e,u^\e,\phi^\e}\}_{\e>0}$.
\begin{lem}\label{lem:bdd}
For every $M>0$, there exists $\wdt \e>0$ such that for all $\e \in (0,\wdt \e)$,
\bea\ad 
\sup_{\e\in (0,\wdt\e)}\sup_{\substack{(u^\e,\phi^\e) \in \CU_b^+\\ L_{T,1}(u^\e)\leq M, L_{T,2}(\phi^\e)\leq M\e h^2(\e)}} \EE\bigg(\sup_{t\in [0,T]} \|\eta^{\e, u^\e, \phi^\e}(t)\| \bigg) <\infty.
\eea  
\end{lem}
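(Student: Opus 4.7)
The plan is to exploit the four-term decomposition \eqref{b-eta}, $\etae = \sum_{k=1}^4 \etae_k$, bound each summand, and close the argument with Gronwall's inequality. The hardest piece, $\etae_1$, has already been treated: taking $m = m_0(\e)$ as in \lemref{lem:eta1}, estimate \eqref{eta1-bdd} supplies the uniform bound $\EE\bigl(\sup_{t\in [0,T]}\|\etae_1(t)\|\bigr)\leq K+1$ for all $\e\in (0,\e_1)$ and all admissible controls with $L_{T,1}(u^\e)\leq M$ and $L_{T,2}(\phi^\e)\leq M\e h^2(\e)$. This is the only place where the Poisson-equation and splitting machinery enter the present proof, and once it is secured the remainder is routine.

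For $\etae_2$, the mean value theorem remark below \eqref{b-eta} gives $\etae_2(t) = \int_0^t \nabla \bar b(X^{+,\e}(s))\,\etae(s)\,ds$ with $X^{+,\e}(s)$ a convex combination of $\Xe(s)$ and $\bar X(s)$; since $\bar b(x)=\sum_{i\in \LL} b(x,i)\mu_i^x$, condition (H1) together with the Lipschitz continuity of $x\mapsto \mu^x$ under (H2)-(H3) (see \eqref{und-mu} and the surrounding discussion) yields $\|\nabla \bar b\|_\infty<\infty$, producing the Gronwall contribution $\sup_{s\leq t}\|\etae_2(s)\|\leq K\int_0^t \sup_{r\leq s}\|\etae(r)\|\,ds$. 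For $\etae_3$, Cauchy-Schwarz with the linear growth of $\sg$ from (H1) and $L_{T,1}(u^\e)\leq M$ gives $\sup_{t\in [0,T]}\|\etae_3(t)\|\leq K\bigl(1+\sup_{s\in [0,T]}\|\Xe(s)\|\bigr)\sqrt{2MT}$, whose expectation is uniformly bounded in $(\e,u^\e,\phi^\e)$ by \lemref{lem:X-bdd}. For $\etae_4$, Doob's $L^2$-inequality and the linear growth of $\sg$ yield $\EE\bigl(\sup_{t\in [0,T]}\|\etae_4(t)\|^2\bigr)\leq K h^{-2}(\e)\int_0^T \EE\bigl(1+\|\Xe(s)\|^2\bigr)ds$, which vanishes as $\e\to 0$ by \eqref{dev-h} and \lemref{lem:X-bdd}, and in particular stays bounded.

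Setting $\Psi(t):=\EE\bigl(\sup_{s\leq t}\|\etae(s)\|\bigr)$, which is finite for each fixed $\e>0$ by \lemref{lem:X-bdd} and the definition \eqref{def-etae}, summing the four bounds delivers $\Psi(t) \leq C + K\int_0^t \Psi(s)\,ds$ with constants $C$ and $K$ independent of $\e\in (0,\wdt\e)$ and of the admissible control, provided $\wdt\e\leq \e_1$ is chosen small enough. Gronwall's inequality then produces $\Psi(T)\leq Ce^{KT}$, which is the desired uniform bound. The main obstacle has already been absorbed into \lemref{lem:eta1}; in this lemma the only care required is the a priori finiteness of $\Psi(t)$ for fixed $\e$ (so that Gronwall is legitimate) and the boundedness of $\nabla \bar b$, both of which follow from results already established under (H1)-(H3).
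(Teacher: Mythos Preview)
Your proposal is correct and follows essentially the same approach as the paper: decompose $\etae$ via \eqref{b-eta}, invoke \eqref{eta1-bdd} from \lemref{lem:eta1} for $\etae_1$, use the Lipschitz bound on $\bar b$ to turn $\etae_2$ into the Gronwall term, bound $\etae_3$ by Cauchy--Schwarz with \lemref{lem:X-bdd}, bound $\etae_4$ via Doob's inequality, and close with Gronwall. The only cosmetic difference is that the paper obtains the $\etae_2$ estimate directly from the Lipschitz constant $L_0$ of $\bar b$ rather than the mean value theorem, and your explicit remark on the a priori finiteness of $\Psi(t)$ is a worthwhile addition the paper omits.
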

\begin{proof}
Fix $M>0$ and  let $(u^\e,\phi^\e) \in \CU_b^+$ with $L_{T,1}(u^\e)\leq M$ and $L_{T,2}(\phi^\e)\leq M \e h^2(\e)$. The definition of $\eta^{\e,u^{\e},\phi^\e}$ yields that $
X^{\e,u^\e, \phi^\e}=\bar X(t)+\sqe h(\e) \eta^{\e, u^\e, \phi^\e}(t)$. The Lipschitz continuity of function $b$ and so the $\bar b$ gives
\bea\disp
\sup_{s\in[0,t]} \|\eta_2^{\e, u^\e, \phi^\e}(s)\| 
\ad = \sup_{s\in [0,t]} \Big\|\int_0^s \frac{1}{\sqe h(\e)}\big(\bar b(\Xe(r)) - \bar b(\bar X(r)) \big)dz \Big\|  \\
\aad  \leq L_0 \int_0^t \sup_{r\in [0,s]}\|\eta^{\e, u^\e, \phi^\e}(r)\| ds.
\eea
In addition, the H\"{o}lder inequality implies 
\bea\ad \!\!\!
\EE\bigg(\sup_{s\in [0,t]}\|\etae_3(s)\| \bigg)\\
\ad\!\!\!\! \leq \EE\bigg( \int_0^t \|\sg(\Xe(s),\Ye(s))\|^2 ds\bigg)^{\frac{1}{2}} \bigg( \int_0^t \|u^\e(s)\|^2 ds\bigg)^{\frac{1}{2}} \\
\aad \leq  \sqrt{2MT} \EE\bigg(1+\sup_{s\in [0,t]}\|\Xe(s)\|\Big) \leq K \sqrt{2MT},
\eea
and there exists $\e_2>0$ such that for all $\e\in (0,\e_2)$,
\bea\disp 
\EE\bigg(\sup_{s \in [0,t]} \|\etae_4(s)\|\bigg) 
\ad \leq \frac{1}{h(\e)} \bigg(\int_0^t \EE \|\sg(\Xe(s),\Ye(s))\|^2  ds \bigg)^{1/2} \\
\aad \leq \frac{K\sqrt{T}}{h(\e)} \EE\bigg(1+\sup_{s\in [0,t]} \|\Xe(s)\|\bigg)\leq \frac{K\sqrt{T}}{h(\e)} \leq K\sqrt{T}.
\eea

Combing above estimates and \eqref{eta1-bdd}, \lemref{lem:X-bdd} implies for all $\e\in (0,\e_2)$,
\bea\ad \!\!\!\!
\EE\bigg(\sup_{s\in [0,t]}\|\etae(s)\| \bigg) \\
\aad\!\!\!\!\! \leq \EE\bigg(\sup_{s \in [0,t]}\|\etae_1(s)\|\bigg) + L_0 \int_0^t \EE\bigg(\sup_{r \in [0,s]}\|\etae(r)\| \bigg)  ds + K\sqrt{2MT} + K\sqrt{T}.
\eea
Take $\wdt \e =\e_1\wedge \e_2$, the lemma follows from the Gronwall inequality.
\end{proof} 

\begin{lem}\label{lem:tight}
Suppose (H1)-(H3) hold. For every $M\in \NN$, the sequence $\{\etae;\phi^\e\in \CS_{+,\e}^M,u^\e\in S_2^M, 0\leq\e \leq \wdt \e\}$ is tight in $C([0,T];\rr^d)$. 
\end{lem}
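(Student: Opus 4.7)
The plan is to verify tightness in $C([0,T];\rr^d)$ via the standard combination of compact containment and a uniform modulus-of-continuity estimate. The compact containment at each time follows from \lemref{lem:bdd} together with $\etae(0)=0$. It remains to show that for every $\delta>0$,
\beq{mod-cont}
\lim_{h\downarrow 0}\limsup_{\e\downarrow 0}\PP\Big(\sup_{0\leq t_1\leq t_2\leq T,\, t_2-t_1\leq h}\|\etae(t_2)-\etae(t_1)\|>\delta\Big)=0,
\eeq
uniformly over $u^\e\in S_2^M$ and $\phi^\e\in \CS_{+,\e}^M$.

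To establish \eqref{mod-cont}, I would decompose $\etae=\sum_{k=1}^{4}\etae_k$ as in \eqref{b-eta} and control each summand's modulus in $L^1$. The singular piece $\etae_1$ is exactly what \lemref{lem:eta1} was designed to handle: the estimate \eqref{eta1-tight} provides $\EE(\sup_{t_2-t_1\leq h}\|\etae_1(t_2)-\etae_1(t_1)\|)\leq Kh^{1/2}$ uniformly in $\e$. For $\etae_2$, the mean value theorem yields $\etae_2(t_2)-\etae_2(t_1)=\int_{t_1}^{t_2}\nabla\bar b(X^{+,\e}(s))\etae(s)\,ds$, and the boundedness of $\nabla\bar b$ combined with \lemref{lem:bdd} gives $\EE\|\etae_2(t_2)-\etae_2(t_1)\|\leq Kh$. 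For $\etae_3$, the Cauchy--Schwarz inequality together with the linear growth of $\sg$, \lemref{lem:X-bdd}, and the bound $\int_0^T\|u^\e(s)\|^2 ds\leq 2M$ produces $\EE\|\etae_3(t_2)-\etae_3(t_1)\|\leq Kh^{1/2}$. For $\etae_4$, the Burkholder--Davis--Gundy inequality combined with the linear growth of $\sg$ and \lemref{lem:X-bdd} gives $\EE\sup_{t\in[0,T]}\|\etae_4(t)\|^2\leq KT/h^2(\e)\to 0$ as $\e\to 0$, so $\etae_4$ converges to $0$ in $C([0,T];\rr^d)$ in probability and its modulus trivially vanishes.

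Summing the four modulus estimates via the triangle inequality and applying Markov's inequality yields \eqref{mod-cont}, which, combined with the compact containment from \lemref{lem:bdd}, gives the claimed tightness. The only genuinely delicate step is the singular term $\etae_1$, where the coefficient $1/(\sqe h(\e))$ obstructs a direct estimate; this difficulty has already been resolved in \lemref{lem:eta1} through the mollified Poisson solution $\Phi_m$ combined with the splitting of $\psi^\e$ according to whether $|\psi^\e|\leq \beta/(\sqe h(\e))$ or not. Given \lemref{lem:eta1}, the remaining three components require only routine Gronwall, Cauchy--Schwarz, and BDG arguments, so no new obstacle arises at this stage.
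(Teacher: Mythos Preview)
Your proposal is correct and follows essentially the same approach as the paper: both decompose $\etae$ into the four terms of \eqref{b-eta}, invoke \lemref{lem:eta1} for the singular piece $\etae_1$, use the Lipschitz/mean-value bound on $\bar b$ together with \lemref{lem:bdd} for $\etae_2$, the Cauchy--Schwarz (H\"older) inequality with \lemref{lem:X-bdd} for $\etae_3$, and the martingale maximal inequality to show $\etae_4\to 0$. The only cosmetic difference is that the paper writes the $\etae_2$ estimate via Lipschitz continuity of $\bar b$ rather than the mean value theorem, which amounts to the same thing.
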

\begin{proof}
For $0\leq h\leq T$ and $\e>0$, we note that 
\bea
\eta^{\e,u^\e,\phi^\e}(t_2)-\eta^{\e,u^\e,\phi^\e}(t_1)=\sum_{k=1}^4 \big(\eta_k^{\e, u^\e,\phi^\e}(t_2)-\eta_k^{\e, u^\e, \phi^\e}(t_1)\big).
\eea
Then the linear growth condition on $\sg$  and \lemref{lem:X-bdd} implies that
\bea \ad
\int_0^t \sg(X^{\e, u^\e, \phi^\e}(s), Y^{\e, u^\e, \phi^\e}(s))dW(s)
\eea
is a square integrable martingale. Thus the Doob maximal inequality gives
\bea \ad 
\EE\Big(\sup_{t \in [0,T]} \|\eta_4^{\e, u^\e, \phi^\e}(t)\|^2 \Big) \leq   \frac{4}{h^2(\e)}\int_0^t \|\sg(X^{\e, u^\e, \phi^\e}(t), Y^{\e, u^\e, \phi^\e}(t))\|^2 dt \to 0
\eea
as $\e\to 0$. \lemref{lem:eta1} gives the tightness of $\{\etae_1\}_{\e \in (0,\e_1)}$ in $C([0,T];\rr^d)$. Hence, it remains to show the tightness of $\{\eta_k^{\e, u^\e, \phi^\e}\} $ for $k=2,3$. For $\eta_3^{\e, u^\e, \phi^\e}$, we obtain 
\bea\ad 
\EE \Big(\sup_{0\leq t_1 \leq t_2 \leq T, t_2\leq t_1+h} \|\eta_3^{\e, u^\e, \phi^\e}(t_2)- \eta_3^{\e, u^\e,  \phi^\e}(t_1)\|^2 \Big) \\
\aad = \EE \Big( \sup_{0\leq t_1\leq t_2 \leq T, t_2\leq t_1+h} \Big\| \int_{t_1}^{t_2} \sg(X^{\e, u^\e, \phi^\e}(s), Y^{\e,u^\e,\phi^\e}(s)) u^\e(s)ds \Big\|^2 \Big) \\
\aad \leq \EE \sup_{0\leq t_1\leq T-h} \int_{t_1}^{t_1+h} \|\sg(X^{\e, u^\e, \phi^\e}(s), Y^{\e, u^\e, \phi^\e}(s))\|^2 ds \int_0^T \|u^\e(s)\|^2 ds \\
\aad \leq 2M h \EE\Big(1+ \sup_{0\leq t \leq T} \|X^{\e, u^\e, \phi^\e}(t)\|^2  \Big) \to 0, \quad \text{ as } h\to 0.
\eea
For $\etae_2$, \lemref{lem:bdd} gives for all $\e \in (0,\wdt \e)$,
\bea\ad
\EE\bigg(\sup_{\substack{0\leq t_1\leq t_2 \leq T \\ t_2\leq t_1+h}}\|\etae_2(t_2)-\etae_2(t_1)\|\bigg)\\
\aad =\EE \bigg(\sup_{\substack{0\leq t_1\leq t_2\leq T \\ t_2\leq t_1+h}} \Big\|\int_{t_1}^{t_2} \frac{1}{\sqe h(\e)}[\bar b(\Xe(s))-\bar b(\bar X(s))]ds\Big\| \bigg) \\
\aad = \EE \bigg(\sup_{\substack{0\leq t_1\leq t_2\leq T \\ t_2\leq t_1+h}} \int_{t_1}^{t_2} L_0 |\etae(s)|ds \bigg) \\
\aad \leq L_0 h\,  \EE\Big(\sup_{s\in [0,T]} \|\etae(s)\|\Big) \to 0, \text{ as } h \to 0.
\eea
Thus, we complete the proof.
\end{proof}

Given $\e>0$ and let $(u^\e, \phi^\e)\in \CU_b^+$, we note the equation \eqref{c-sde} indicates that only the controlled rates $\phi_{\Ye(t-),\cdot }^\e$ can influence the dynamics of $(\Xe, \Ye)$. Thus in proving the Laplace upper bound, without loss of generality, we assume that
\bea\!\!\!
\phi_{ij}^\e(t,z)=1, \forall\; (i,j)\in \TT \text{ such that } i\in \LL \setminus \{\Ye(t-)\}, \forall \; (t,z)\in [0,T]\times [0,\zeta].
\eea
Moreover, we set $\phi_{ij}^\e(t,z)=1$
 if $(i,j)\notin \TT$ by convention. This implies that
\beq{psi-conv} 
\psi_{ij}^\e(t,z):=\frac{\phi_{ij}^\e(t,z)-1}{\sqe h(\e)}=0, \text{ if } (i,j)\notin \TT \text{ or } (i,j)\in \TT \text{ with } i\in \LL \setminus \{\Ye(t-)\}.
\eeq
For $t\in [0,T]$ and a fixed $\beta \in(0,\infty)$, we adopt the approach from \cite{BDG16} to decompose $\psi_{ij}^\e$ into two terms: $\psi_{ij}^\e\indi_{\{|\psi_{ij}^\e|\leq \beta/\sqe h(\e)\}}$ and $\psi_{ij}^\e \indi_{\{|\psi_{ij}^\e|> \beta/\sqe h(\e)\}}$. We then define the occupation measure $\theta^\e(t):=(\theta_j^\e(t))_{j\in \LL}$  as
\beq{the} 
\theta_j^\e(t)(G)=\sum_{i \in \LL} \indi_{\{\Ye(t-)=i\}}  \int_G \psi_{ij}^\e(t,z)\indi_{\{|\psi_{ij}^\e(t,z)|\leq \beta/\sqe h(\e)\}} \la_\zeta(dz), \; G\in \CB[0,\zeta].
\eeq
Using this decomposition, $\theta^\e$ belongs to $(\CM_F[0,\zeta])^{|\LL|}$. 
Now define $\Pi^\e\in \PP_{\text{leb}}(\HH_T)$ by 
\beq{Pie}
\Pi^\e(A\times B\times C\times D) := \int_{[0,T]} \indi_A(s) \indi_B (\Ye(s)) \indi_{C}(\theta^\e(s)) \indi_{D}(u^\e(s))ds. 
\eeq

In proving the large deviation upper bound, the main step will be characterizing the limiting point of $\Pi^\e$. From \eqref{Pie}, we have $
\half \int_0^T \|u^\e(s)\|^2 ds =\half \int_{\HH_T} \|z\|^2 \Pi^\e(d\bv)$. Moreover, from \eqref{psi-conv} and \eqref{the}, we obtain the following:
\bea\ad 
\half \sum_{(i,j)\in \TT} \int_{[0,T]\times [0,\zeta]} |\psi_{ij}^\e(s,z)|^2  \la_\zeta(dz)ds  \\
\ad = \half \sum_{j\in \LL}\sum_{i\in \LL} \int_{[0,T]\times [0,\zeta]} \indi_{\{\Ye(s-)=i\}} |\psi_{ij}^\e(s,z)|^2  \la_{\zeta}(dz)ds  \\
\ad = \half \int_0^T 
\wdh{d}(\theta^\e(s)) ds\\
\aad\ + \half  \sum_{j\in \LL}\sum_{i\in \LL} \int_{[0,T]\times [0,\zeta]} \indi_{\{\Ye(s-)=i\}}|\psi_{ij}(s,z)|^2 \indi_{\{|\psi_{ij}^\e(s,z)|>\beta/(\sqe h(\e))\}} \la_\zeta(dz)ds \\
\ad = \half \int_{\HH_T} \wdh{d}(\theta) \Pi^\e(d\bv)\\
\aad\ +\half \sum_{j\in \LL}\sum_{i\in \LL} \int_{[0,T] \times [0,\zeta]} \indi_{\{\Ye(s-)=i\}}|\psi_{ij}(s,z)|^2 \indi_{\{|\psi_{ij}^\e(s,z)|>\beta/(\sqe h(\e))\}} \la_\zeta(dz)ds.
\eea
Consequently,
\beq{LTe}\barray\ad
\half \int_0^T  \| u^\e(s)\|^2 ds + \half \sum_{(i,j)\in \TT}\int_{[0,T]\times [0,\zeta]} |\psi_{ij}^\e(s,z)|^2 \la_\zeta(dz)ds\\
\ad =\int_{\HH_T}
\half \Big[ \|z\|^2 + \wdh d(\theta)\Big] \Pi^\e(d\bv) \\
\aad + \half \sum_{j\in \LL}\sum_{i\in \LL} \int_{[0,T]\times [0,\zeta]} \indi_{\{\Ye(s-)=i\}}|\psi_{ij}(s,z)|^2 \indi_{\{|\psi_{ij}^\e(s,z)|>\beta/(\sqe h(\e))\}} \la_\zeta(dz)ds.
\earray\eeq
To proceed, we prove the tightness of $\{\etae, \Pi^\e\}$ and characterize its limiting point.
\begin{prop}\label{prop:tight}
Fix $M\in (0,\infty)$. Define $\Pi^\e$ as in \eqref{Pie}. For $\e \in (0,1)$  and $(u^\e, \phi^\e)\in \CU_b^+$ such that $L_{T,1}(u^\e)\leq M$ and $L_{T,2}(\phi^\e)\leq M \e h^2(\e)$. Then the sequence $\{(\etae, \Pi^\e)\}_{\e \in (0,\wdt \e)}$ is a tight family of $C([0,T];\rr^d)\times \CM_F(\HH_T)$-valued random variables. For a weak limit $(\eta, \Pi)$ of $(\etae, \Pi^\e)$, we have 
\beq{Pi-M}
\int_{\HH_T}\half \big[\|z\|^2 +  \wdh d(\theta)\big] \Pi(d\bv)\leq M+\half \kappa_2(\beta) M,
\eeq
and the following holds:
\beq{eta-limit}\barray
\eta(t)\ad = \int_{\HH_t} \nabla \bar b(\bar X(s)) \eta(s)ds +  \int_{\HH_t} \sg(\bar X(s),y)z \Pi(d\bv) \\
\aad \quad + \int_{\HH_t} \sum_{j\in \LL} \big[\Phi(\bar X(s),j)-\Phi(\bar X(s),y)\big] \Ga_{yj}^\theta(\bar X(s)) \Pi(d\bv),
\earray\eeq
and 
\beq{limit-fast}
\int_{\HH_t} \Ga_{yj}(\bar X(s))\Pi(d\bv)=0, \quad \text{for all } j, \text{ and a.e. }  t \in [0,T].
\eeq
\end{prop}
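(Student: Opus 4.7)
The tightness of $\{\etae\}$ in $C([0,T];\rr^d)$ is already furnished by \lemref{lem:tight}, so I first establish tightness of $\{\Pi^\e\}$ in $\CM_F(\HH_T)$. Since $[\Pi^\e]_1=\la_T$ and $\LL$ is finite (hence compact), only the $(\theta,z)$-marginals need attention. The cost bound $L_{T,1}(u^\e)\le M$ gives $\int_{\HH_T}\|z\|^2\Pi^\e(d\bv)\le 2M$, while the identity \eqref{LTe} combined with \lemref{lem:ineq-2}(ii) gives $\int_{\HH_T}\wdh d(\theta)\Pi^\e(d\bv)\le |\TT| M\kappa_2(\beta)$. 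These uniform estimates, together with the Markov-inequality reasoning of \propref{prop:comp-le} (which also controls $\theta_j[0,\zeta]$ via Cauchy--Schwarz against $\wdh d$), yield pre-compactness of $\{\Pi^\e\}$. Pass to a subsequence along which $(\etae,\Pi^\e)\Rightarrow (\eta,\Pi)$. The bound \eqref{Pi-M} then follows by Fatou's lemma and the lower semi-continuity of $\wdh d$ on $\CM_F([0,\zeta])^{|\LL|}$.

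To identify \eqref{eta-limit}, I use the decomposition $\etae=\sum_{k=1}^4 \eta_k^{\e,u^\e,\phi^\e}$ from \eqref{b-eta} and pass to the limit termwise. Doob's inequality and the prefactor $1/h(\e)\to 0$ give $\eta_4^{\e,u^\e,\phi^\e}\to 0$ in $L^2$. Writing $\eta_2^{\e,u^\e,\phi^\e}(t)=\int_0^t\nabla\bar b(\bar X^{+,\e}(s))\etae(s)ds$ with $\bar X^{+,\e}\to\bar X$ uniformly, the joint convergence of $(\etae,\bar X^{+,\e})$ and continuity of $\nabla\bar b$ yield $\eta_2^{\e,u^\e,\phi^\e}\Rightarrow\int_0^\cdot\nabla\bar b(\bar X(s))\eta(s)ds$. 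For $\eta_3^{\e,u^\e,\phi^\e}$, Lipschitz continuity of $\sg$ together with $\sup_t\|\Xe(t)-\bar X(t)\|=\sqe h(\e)\sup_t\|\etae(t)\|\to 0$ from \lemref{lem:bdd} allows me to replace $\sg(\Xe(s),\Ye(s))$ by $\sg(\bar X(s),\Ye(s))$; the resulting integral equals $\int_{\HH_t}\sg(\bar X(s),y)z\,\Pi^\e(d\bv)$, and the truncate-and-tail uniform integrability argument of \eqref{uni-arg} pushes this through the linear growth of $\sg z$ in $z$. For the singular term $\eta_1^{\e,u^\e,\phi^\e}$, I invoke \lemref{lem:eta1} with $m=m_0(\e)$ to replace it by $N_\beta^{\psi^\e}(t)$ up to an $o(1)$ error; using the Lipschitz continuity of $\Phi$ (\remref{rem:bdd-Phi}) to swap $\Xe$ for $\bar X$ and the definition \eqref{the} of $\theta^\e$, one recognizes $N_\beta^{\psi^\e}(t)=\int_{\HH_t}\sum_{j\in\LL}[\Phi(\bar X(s),j)-\Phi(\bar X(s),y)]\Ga_{yj}^\theta(\bar X(s))\Pi^\e(d\bv)+o(1)$. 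The integrand is bounded and continuous in $(s,y,\theta)$ (the $\Ga^\theta$-dependence is linear in the mass, controlled uniformly on $\wdh d$-sublevel sets), so weak convergence delivers \eqref{eta-limit}.

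The invariance relation \eqref{limit-fast} is the key new step. For any $f:\LL\to\rr$, Dynkin's formula applied to the controlled pure-jump process $\Ye$ yields
\[
f(\Ye(t))-f(y_0)=\tfrac{1}{\e}\int_0^t [Q^{\phi^\e}(\Xe(s))f](\Ye(s))\,ds + M^\e_f(t),
\]
where $Q^{\phi^\e}_{yj}(x)=\int_0^{q_{yj}(x)}\phi^\e_{yj}(s,z)\la_\zeta(dz)$ for $y\neq j$ and $M^\e_f$ is a martingale whose predictable quadratic variation is of order $\e^{-1}$. Multiplying by $\e$, the left-hand side is $O(\e)$ in sup-norm and $\e M^\e_f(t)=O(\sqe)$ in $L^2$, so $\int_0^t[Q^{\phi^\e}(\Xe(s))f](\Ye(s))ds\to 0$ in probability. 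Decomposing $Q^{\phi^\e}_{yj}(x)=q_{yj}(x)+\sqe h(\e)\int_0^{q_{yj}(x)}\psi^\e_{yj}(s,z)\la_\zeta(dz)$ and bounding the extra piece by \lemref{lem:ineq-2}(i) on the tail $\{|\psi^\e|>\beta/(\sqe h(\e))\}$ and Cauchy--Schwarz with \lemref{lem:ineq-2}(ii) on the bulk, the control contribution is $O(\sqe h(\e))\to 0$. Combining with $\Xe\to\bar X$ and Lipschitz continuity of $Q$, I obtain $\int_{\HH_t}[Q(\bar X(s))f](y)\Pi^\e(d\bv)\to 0$; weak convergence then gives $\int_{\HH_t}[Q(\bar X(s))f](y)\Pi(d\bv)=0$ for all $t\in[0,T]$. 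Choosing $f=\indi_{\{j\}}$ and using $[Q(x)\indi_{\{j\}}](y)=q_{yj}(x)=\Ga_{yj}(x)$ yields \eqref{limit-fast}. The main obstacle will be the simultaneous coordination of the three approximation parameters in the treatment of $\eta_1^{\e,u^\e,\phi^\e}$---the mollifier scale $m=m_0(\e)$, the truncation threshold $\beta$, and the singular factor $1/(\sqe h(\e))$---so that all residual error terms vanish while the continuity arguments used to pass to the weak limit remain valid uniformly.
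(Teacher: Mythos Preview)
Your proposal is correct and follows essentially the same route as the paper: tightness of $\Pi^\e$ via the uniform cost bound and Markov's inequality, Fatou/lower semi-continuity for \eqref{Pi-M}, term-by-term passage to the limit in the decomposition \eqref{b-eta} (with \lemref{lem:eta1} handling $\eta_1$ and the uniform-integrability trick of \eqref{uni-arg} handling $\eta_3$), and the ``multiply the $\Ye$-equation by $\e$'' argument with $f=\indi_{\{j\}}$ for \eqref{limit-fast}. One small point: your bound $\int_{\HH_T}\wdh d(\theta)\Pi^\e(d\bv)\le |\TT| M\kappa_2(\beta)$ carries an unnecessary $|\TT|$ factor---if you instead apply \lemref{lem:ineq}(iii) pointwise to get $|\psi_{ij}^\e|^2\indi_{\{|\psi_{ij}^\e|\le\beta/\sqe h(\e)\}}\le \kappa_2(\beta)\ell(\phi_{ij}^\e)/(\e h^2(\e))$ and \emph{then} sum over $(i,j)\in\TT$ using $L_{T,2}(\phi^\e)\le M\e h^2(\e)$, you recover the exact constant $M+\tfrac12\kappa_2(\beta)M$ stated in \eqref{Pi-M}.
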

\begin{proof}
The tightness of $\{\etae\}$ is given by \lemref{lem:tight}. We now prove the tightness of $\{\Pi^\e\}$. From \eqref{LTe}, the (iii) in  \lemref{lem:ineq}  implies that 
\beq{Pi-e}\barray\ad\!\!\!\!\!  
\int_{\HH_T}\Big[\half \|z\|^2 +\wdh d(\theta)\Big]\Pi^\e(d\bv)\\
\aad\!\!\!\!\! =\half \int_0^T \|u^\e(s)\|^2 ds +\half \sum_{(i,j)\in \TT}\int_{[0,T]\times [0,\zeta]} |\psi_{i j}^\e(s,z)|^2 \indi_{\{|\psi_{ij}^\e(s,z)|\leq \beta/(\sqe h(\e))\}} \la_\zeta(dz)ds \\
\aad\!\!\!\!\!  \leq \half \int_0^T \|u^\e(s)\|^2 ds + \half \frac{\kappa_2(\beta)}{\e h^2(\e)}\sum_{(i,j)\in \TT} \int_{[0,T]\times [0,\zeta]}\ell(\phi_{ij}^\e(s,z))\la_\zeta(dz)ds \\
\aad\!\!\!\!\!  = L_{T,1}(u^\e)+\half \frac{\kappa_2(\beta)}{\e h^2(\e)} L_{T,2}(\phi^\e) 
\leq M + \half \kappa_2(\beta) M <\infty.
\earray\eeq

To prove the tightness of $\{\Pi^\e\}$, it suffices to show that for any $\dl\in (0,\infty)$, there exists $K'>0$ such that $
\sup_{\e} \Pi^\e\big\{(s,y,\theta,z)\in \HH_T: \sum_{j\in \LL} \theta_j[0,\zeta] +\|z\|> K' \big\} \leq \dl$. 
This can be proved in a similar manner as \eqref{Pi-n}, by using \eqref{Pi-e} in place of \eqref{I-n}. Then the inequality of \eqref{Pi-M} follows from the Fatou's lemma and the lower semi-continuity of $\wdh d$. Consequently, for any subsequence of $\{\etae,\Pi^\e\}$, there exists a further subsequence of $\{\etae,\Pi^\e\}$ converging weakly to some limit $(\eta,\Pi)$. We still index the convergent subsequence by $\e$, with certain abuse of notation. By the Skorokhod representation theorem, we assume $\{\etae,\Pi^\e\}$ converges to $(\eta,\Pi)$ w.p.1. We now in the position to prove \eqref{eta-limit}. From the definition \eqref{the}, we have 
\bea\ad \!\!\!\!\!\!
N_{\beta}^{\psi^\e}(t)\\
\ad\!\!\!\!\!\! =\! \int_{[0,T]\times [0,\zeta]} \sum_{(i,j)\in \TT} \big[\Phi(\Xe(s), j)-\Phi(\Xe(s),i)\big]\\
\aad\qquad \times \indi_{\{\Ye(s-)=i\}} \indi_{E_{ij}(\Xe(s))}(z) \psi_{ij}^\e(s,z) \indi_{\{|\psi_{ij}(s,z)|\leq \beta/(\sqe h(\e))\}} \la_\zeta(dz)ds \\
\ad\!\!\!\!\!\! =\!\int_0^t \sum_{(i,j)\in \TT} \big[\Phi(\Xe(s),j)-\Phi(\Xe(s),i)\big]\\
\aad\qquad\qquad \times \indi_{\{\Ye(s-)=i\}} \theta_j^\e(E_{ij}(\Xe(s)))ds \\
\ad\!\!\!\!\!\! =\!\int_0^t\!\! \sum_{j\in \LL}\!\big[\Phi(\Xe(s),j)-\! \Phi(\Xe(s),\Ye(s-))\big]\Ga_{\Ye(s-)\, j}^{\theta^\e(s)}\! (\Xe(s))ds \\
\ad\!\!\!\!\!\! =\int_{\HH_t} \sum_{j\in \LL}\big[\Phi(\Xe(s),j)-\Phi(\Xe(s),y)\big]\Ga_{yj}^\theta(\Xe(s)) \Pi^\e(d\bv).
\eea
Moreover, one can obtain 
\bea\ad 
\int_0^t \sg(\Xe(s),\Ye(s))u^\e(s)ds = \int_{\HH_t} \sg(\Xe(s), y) z\Pi^\e(d\bv).
\eea
For the term $\etae_2$, applying the Taylor expansion, we have 
\beq{eta2-Tay}
\int_0^t \frac{1}{\sqe h(\e)} \big[\bar b(\Xe(s))-\bar b(\bar X(s))\big]ds = \int_0^t \nabla \bar b(\bar X(s))\etae(s)ds + \CR^\e(t),
\eeq
where $\CR^\e(t)$ is the reminder of Taylor expansion. Due to the boundedness of the derivative of $\bar b$, \lemref{lem:X-bdd} and \lemref{lem:bdd} implies $\sup_{t\in [0,T]} \CR^\e(t)\to 0$ in probability.  From \eqref{b-eta} and \lemref{lem:tight}, taking $\e\to  0$ and applying the argument of the proof of part (c) in \propref{prop:comp-le}, we have 
\beq{eta-lim}\barray
\eta(t)\ad = \int_{\HH_t} \nabla \bar b(\bar X(s)) \eta(s)ds + \int_{\HH_t} \sg(\bar X(s), y)z\Pi(d\bv) \\
\aad \quad + \int_{\HH_t} \sum_{j \in \LL}[\Phi(\bar X(s),j)-\Phi(\bar X(s),y)] \Ga^\theta_{yj}(\bar X(s)) \Pi(d\bv).
\earray\eeq
To prove \eqref{limit-fast}, let us consider any map $V$ from $\LL$ to $\rr$, we have
\beq{dig-fast}\barray\ad\!\!\!\!\!\! 
V(\Ye(t))-V(y_0)\\
\ad\!\!\!\!\!\! = \sum_{(i,j)\in \TT} \big(V_j -V_i \big) \int_{[0,t]\times [0,\zeta]} \indi_{\{\Ye(s-)=i\}} \indi_{E_{ij}(\Xe(s))}(z) N_{ij}^{\e^{-1}\phi_{ij}^\e}(dz\times ds)\\
\ad\!\!\!\!\!\! = \frac{1}{\e}\!\!\! \sum_{(i,j)\in \TT}\!\! (V_j-V_i)\!\! \int_0^t\!\! \int_0^\zeta \!\!\! \indi_{\{\Ye(s-)=i\}} \indi_{E_{ij}(\Xe(s))}(z) \phi_{ij}^\e(s,z)\la_\zeta(dz)ds + \wdh M_{\phi}^\e(t) \\
\ad\!\!\!\!\!\! =\frac{1}{\e}\!\! \sum_{(i,j)\in \TT} (V_j-V_i) \int_0^t\!\! \int_0^\zeta \indi_{\{\Ye(s-)=i\}} \indi_{E_{ij}(\Xe(s))}(z) [\phi_{ij}^\e(s,z)-1]\la_\zeta(dz)ds\\
\aad\!\!\!\!+ \frac{1}{\e} \sum_{(i,j)\in \TT}(V_j-V_i) \int_0^t\int_0^\zeta \indi_{\{\Ye(s-)=i\}} \indi_{E_{ij}(\Xe(s))}(z)\la_\zeta(dz)ds + \wdh M_{\phi}^\e(t),
\earray\eeq
where $V_i:=V(i)$ and  $\wdh M_{\phi}^\e$ is the martingale given by 
\bea\ad\!\!\!\!\!\!
\wdh M_{\phi}^\e(t)=\!\!\!\!\sum_{(i,j)\in \TT}\!\! (V_j-V_i)\! \int_0^t \int_0^\zeta \!\!\! \indi_{\{\Ye(s-)=i\}} \indi_{E_{ij}(\Xe(s))}(z) \wdt N_{ij}^{\e^{-1}\phi_{ij}^\e}(dz\times ds).
\eea
Multiplying $\e$ to both sides of \eqref{dig-fast}, we have 
\bea\ad\!\!\!\!\!\!\! 
\e (V(\Ye(t))-V(y_0)) \\
\aad\!\!\!\!\!\!\!  = \!\!\! \sum_{(i,j)\in \TT}\!\! (V_j-V_i)\int_0^t\!\! \int_0^\zeta \indi_{\{\Ye(s-)=i\}} \indi_{\{E_{ij}(\Xe(s))\}}(r) \sqe h(\e)\psi_{ij}^\e(s,z)\la_\zeta(dz)ds \\
\aad\!\!\!\!\!\!\!  +\sum_{(i,j)\in \TT} (V_j-V_i) \int_0^t\!\! \int_0^\zeta \indi_{\{\Ye(s-)=i\}} \indi_{\{E_{ij}(\Xe(s))\}}(r) \la_\zeta(dz)ds+ \e \wdh M_{\phi}^\e(t). 
\eea
Then the Doob inequality gives 
\bea \disp
\EE\Big(\sup_{t\in [0,T]} \Big\|\e \wdh M_{\phi}^\e(t)\Big\|^2\Big) 
\ad \leq 4 \e \sum_{(i,j)\in \TT} (V_j-V_i)^2\int_{[0,T]\times [0,\zeta]} \phi_{ij}^\e(s,z)\la_\zeta(dz)ds \\
\aad \leq 16 \e \|V\|_\infty^2 \sum_{(i,j)\in \TT} \int_{[0,\zeta]\times[0,T]} \phi_{ij}^\e(s,z)\la_\zeta(dz)ds\\
\aad \leq 16\e \|V\|_\infty^2 (\zeta T e+M\e h^2(\e))\to 0, \quad \text{ as } \e\to 0,
\eea
where the last inequality uses (i) in \lemref{lem:ineq} and $L_{T,2}(\phi^\e)\leq M\e h^2(\e)$.
Thus,
\beq{e-M}
\sup_{t\in [0,T]} \|(\e \wdh  M_{\phi}^\e)(t)\| \to 0, \text{in probability as } \e \to 0. 
\eeq

Furthermore, \lemref{lem:ineq-2} and H\"{o}lder inequality yields
\beq{E-F}\barray\ad\!\!\!\!\!\! 
\EE\!\!\sup_{t\in [0,T]}\!\Big|\!\!\! \sum_{(i,j)\in \TT}\!\!\!\!(V_j-V_i) \!\! \int_0^t\!\!\! \int_0^\zeta\!\! \indi_{\{\Ye(s)=i\}}\!\indi_{E_{ij}(\Xe(s))}(z)\sqe h(\e) \psi_{ij}^\e(s,z)\la_\zeta(dz)ds\Big| \\
\aad\!\!\!\!\!\! \leq 2 \sqe h(\e) \|V\|_\infty \EE\!\! \sum_{(i,j)\in \TT} \int_0^T\!\! \int_0^\zeta \indi_{\{\Ye(s)=i\}} \indi_{E_{ij}(\Xe(s))}(z) |\psi_{ij}^\e(s,z)| \la_\zeta(dz)ds \\
\aad\!\!\!\!\!\! \leq 2\sqe h(\e) \|V\|_\infty \sum_{(i,j)\in \TT} \int_{[0,T]\times [0,\zeta]} |\psi_{ij}^\e(s,z)|\indi_{\{|\psi_{ij}^\e(s,z)|\leq \beta/\sqe h(\e)\}}\la_\zeta(dz)ds \\
\aad\quad\!\!\!\!\!\! + 2\sqe h(\e) \|V\|_\infty \sum_{(i,j)\in \TT} \int_{[0,T]\times [0,\zeta]} |\psi_{ij}^\e(s,z)| \indi_{\{|\phi_{ij}^\e(s,z)|> \beta/\sqe h(\e)\}} \la_\zeta(dz)ds \\
\aad\!\!\!\!\!\! \leq 2\sqe h(\e) \|V\|_\infty |\LL|^2 \Big(M\sqe h(\e) \kappa_1(\beta) +  \sqrt{M|\zeta|T \kappa_2(\beta)}
\Big) \to 0, \text{ as } \e\to 0.
\earray\eeq
For $j\in \LL$, take $V=\indi_{\{j\}}$, \eqref{e-M} and \eqref{E-F} give that 
\bea\ad \!\!\!
\sup_{t\in [0,T]} \Big| \sum_{i\in \LL} \int_0^t  \indi_{\{\Ye(s)=i\}} \Ga_{ij}(\Xe(s))ds \Big|\\
\aad\!\!\!\! =\sup_{t\in [0,T]} \Big|\int_0^t \Ga_{\Ye(s),j}(\Xe(s)))ds\Big| = \sup_{t\in [0,T]} \Big| \int_{\HH_t} \Ga_{yj}(\Xe(s))\Pi^\e(d\bv)\Big|
\eea
converges to $0$ as $\e\to 0$. Consequently, $\int_{\HH_t} \Ga_{yj}(\Xe(s))\Pi^\e(d\bv) \to 0$, uniformly in $t \in [0,T]$. Since  $\int_{\HH_t} \Ga_{yj}(\Xe(s))\Pi^\e(d\bv) \to \int_{\HH_t}\Ga_{yj}(\bar X(s))\Pi(d\bv)$, we have \eqref{limit-fast}. The proof is complete.
\end{proof}

\section{Large deviation lower bound}\label{sec:up}
In this section, we establish the Laplace principle lower bound \eqref{lap-up} which is equivalent to large deviation lower bound, using the variational representations \eqref{var-rep}. 

\begin{thm}\label{thm:up}
For any bounded and continuous function $F: C([0,T];\rr^d)\to \rr$, the following Laplace principle lower bound holds:
\beq{laplace-up} 
\liminf_{\e\to 0}-\frac{1}{h^2(\e)} \log\EE\Big[\exp\big(-h^2(\e) F(\eta^\e)\big)\Big] \geq \inf_{\eta\in C([0,T];\rr^d)}[I(\eta)+F(\eta)].
\eeq
\end{thm}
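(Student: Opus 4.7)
The plan is to apply the variational representation \eqref{var-rep1}, extract a weakly convergent subsequence via \propref{prop:tight}, and match the control cost to the rate function in the limit. Fix $\dl>0$ and, for each $\e\in(0,1)$, choose a $\dl$-near-optimal pair $(u^\e,\phi^\e)\in\CU_b^+$ so that
\bea\ad
\EE\Big[L_{T,1}(u^\e)+\frac{1}{\e h^2(\e)}L_{T,2}(\phi^\e)+F(\etae)\Big]\leq -\frac{1}{h^2(\e)}\log\EE\big[e^{-h^2(\e)F(\eta^\e)}\big]+\dl.
\eea
Because $F$ is bounded, the right side is bounded uniformly in $\e$, so a standard localization argument allows me to replace $(u^\e,\phi^\e)$ with controls satisfying $L_{T,1}(u^\e)\leq M$ and $L_{T,2}(\phi^\e)\leq M\e h^2(\e)$ almost surely, for some $M>0$ independent of $\e$, at the cost of an additional error that vanishes as $\e\to 0$. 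This puts us in the regime of \propref{prop:tight}.

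Fix $\beta\in(0,1/(4\kappa_3))$ with $\kappa_3$ from \lemref{lem:ineq}(iv), and build the occupation measure $\Pi^\e$ as in \eqref{Pie} with truncation level $\beta/\sqe h(\e)$. By \propref{prop:tight}, the family $\{(\etae,\Pi^\e)\}$ is tight in $C([0,T];\rr^d)\times\CM_F(\HH_T)$. Passing to a subsequence and invoking the Skorokhod representation theorem, I may assume $(\etae,\Pi^\e)\to(\eta,\Pi)$ almost surely, with $\Pi$ satisfying \eqref{eta-limit} and \eqref{limit-fast}; in particular $\Pi\in\CP_s(\eta)$, so by \propref{prop:hat-eta},
\bea\ad
\int_{\HH_T}\half\big[\|z\|^2+\wdh d(\theta)\big]\Pi(d\bv)\geq\wdh I(\eta)=I(\eta).
\eea

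The key pathwise estimate is a lower bound on the control cost in terms of $\Pi^\e$. On $\{|\phi_{ij}^\e-1|\leq\beta\}$, \lemref{lem:ineq}(iv) gives $\ell(\phi_{ij}^\e)\geq(1/2-\kappa_3\beta)(\phi_{ij}^\e-1)^2$, which combined with the identity $L_{T,1}(u^\e)=\int_{\HH_T}\half\|z\|^2\Pi^\e(d\bv)$ and the definition of $\Pi^\e$ yields
\bea\ad
L_{T,1}(u^\e)+\frac{1}{\e h^2(\e)}L_{T,2}(\phi^\e)\geq(1-2\kappa_3\beta)\int_{\HH_T}\half\big[\|z\|^2+\wdh d(\theta)\big]\Pi^\e(d\bv).
\eea
Taking expectations and $\liminf_{\e\to 0}$, invoking Fatou's lemma together with the lower semicontinuity of $z\mapsto\|z\|^2$ and $\theta\mapsto\wdh d(\theta)$ under weak convergence, and using $\EE F(\etae)\to\EE F(\eta)$ from bounded convergence, gives
\bea\ad
\liminf_{\e\to 0}\EE\Big[L_{T,1}(u^\e)+\frac{1}{\e h^2(\e)}L_{T,2}(\phi^\e)+F(\etae)\Big]\geq \inf_{\eta'\in C([0,T];\rr^d)}\big\{(1-2\kappa_3\beta)I(\eta')+F(\eta')\big\}.
\eea
Letting $\beta\to 0$ (using \propref{prop:comp-le}, which confines near-minimizers of $I+F$ to a fixed compact sublevel set of $I$ and hence forces $\lim_{\beta\to 0}\inf\{(1-2\kappa_3\beta)I+F\}=\inf\{I+F\}$) and then $\dl\to 0$ yields \eqref{laplace-up}.

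The principal obstacle is the interplay between the $\beta$-truncation and the cost: the measure $\Pi^\e$ only records the portion of $\psi^\e$ on $\{|\psi_{ij}^\e|\leq\beta/\sqe h(\e)\}$, so the pathwise lower bound loses a factor $(1-2\kappa_3\beta)$. Eliminating this factor at the end relies critically on the quadratic behavior of $\ell$ near $1$ from \lemref{lem:ineq}(iv) and on the compactness of the level sets of $I$; the remaining singular term $\etae_1$ is already controlled via the Poisson equation in \lemref{lem:eta1} and absorbed into \propref{prop:tight}, so everything else amounts to feeding the already-proved tightness and limit identification into the variational representation.
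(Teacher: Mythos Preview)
Your proposal is correct and follows essentially the same route as the paper: variational representation $\to$ localization of the controls $\to$ construction of $\Pi^\e$ with $\beta$-truncation $\to$ \propref{prop:tight} for tightness and limit identification $\to$ Fatou plus lower semicontinuity of $\wdh d$ $\to$ send $\beta,\dl\to 0$. The only cosmetic difference is in how the cubic remainder from \lemref{lem:ineq}(iv) is handled: the paper bounds $\kappa_3\sqe h(\e)|\psi^\e|^3\leq\kappa_3\beta|\psi^\e|^2$ and then uses \eqref{psi-sq} to turn it into an \emph{additive} error $\half\beta\kappa_3|\LL|^2M\kappa_2(1)$, whereas you absorb it \emph{multiplicatively} as the factor $(1-2\kappa_3\beta)$ and then appeal to compactness of the level sets (\propref{prop:comp-le}) to pass to the limit $\beta\to 0$; both are equally valid and rely on the same estimate.
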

\begin{proof}
From the representation formula \eqref{var-rep}, for every $\e>0$, we can find $(\wdt u^\e, \wdt \phi^\e)\in \CU_b^+$ such that 
\beq{up-e}\barray\ad
-\frac{1}{h^2(\e)} \log\EE\Big[\exp(-h^2(\e)F(\eta^\e))\Big]\\
\aad \geq  \EE\Big[L_{T,1}(\wdt u^\e)+\frac{1}{\e h^2(\e)} L_{T,2}(\wdt \phi^\e)+ F(\eta^{\e,\wdt u^\e,\wdt \phi^\e})\Big]-\e.
\earray\eeq
Since $F$ is bounded, then \eqref{up-e} implies that 
\bea\ad 
\sup_{\e>0}\EE\Big[L_{T,1}(\wdt u^\e)+\frac{1}{\e h^2(\e)} L_{T,2}(\wdt \phi^\e)\Big] \leq 2 \|F\|_\infty +1:=\wdt K< \infty.
\eea
Fix $\dl>0$, we define the stopping time
\bea\ad 
\tau^\e=\inf\Big\{t\in [0,T]: \frac{1}{\e h^2(\e)} L_{t,2}(\wdt \phi^\e) > \frac{2\wdt K \|F\|_\infty}{\dl} \Big\} \wedge T.
\eea
Then for $(s,z)\in [0,T]\times [0,\zeta]$, let $
\phi^\e(s,z)=\wdt \phi^\e(s,z) \indi_{\{s\leq \tau^\e\}}+ \indi_{\{s>\tau^\e\}}$. 
We note that $\phi^\e\in \CU_b^+$ and $L_{T,2}(\phi^\e)/(\e h^2(\e))\leq (2\wdt K \|F\|_\infty/\dl)$. Moreover,
\bea
\PP(\phi^\e \neq \wdt \phi^\e) 
\ad \leq \PP\Big(\frac{1}{\e h^2(\e)} L_{T,2}(\wdt\phi^\e) > \frac{2\wdt K \|F\|_\infty}{\dl} \Big) \leq \frac{\dl\EE L_{T,2}(\wdt \phi^\e)}{\e h^2(\e) 2\wdt K \|F\|_\infty} \leq \frac{\dl}{2\|F\|_\infty}.
\eea
For any $(s,z)\in [0,T]\times [0,\zeta]$, we then define
\bea\ad 
\wdt \psi_{ij}^\e(s,z):=\frac{\wdt \phi_{ij}^\e(s,z)-1}{\sqe h(\e)} \text{ and } \psi_{ij}^\e(s,z):=\frac{\phi^\e(s,z)-1}{\sqe h(\e)}=\wdt \psi_{ij}^\e(s,z)\indi_{\{s\leq \tau^\e\}}.
\eea
Fix $\beta\in (0,1]$. By 
\eqref{up-e} and $\kappa_2(1)\geq \kappa_2(\beta)$, we have 
\beq{up-e1}\barray\ad
-\frac{1}{h^2(\e)} \log\EE\Big[\exp \big(-h^2(\e) F(\eta^\e)\big) \Big] \\
\aad \geq \EE\Big[ L_{T,1}(\wdt u^\e)+ \frac{1}{\e h^2(\e)} L_{T,2}(\wdt \phi^\e)+F(\eta^{\e, \wdt{u}^\e, \wdt{\phi}^\e}) \Big]-\e \\
\aad\geq \EE\Big[L_{T,1}(\wdt u^\e) + F(\eta^{\e, \wdt{u}^\e, \wdt{\phi}^\e})\Big]-\e \\
\aad\quad + \EE \Big[\frac{1}{\e h^2(\e)} \sum_{(i,j)\in \TT}\int_{[0,T]\times [0,\zeta]}  \ell(\phi_{ij}^\e)\indi_{\{|\psi_{ij}^\e|\leq \beta/(\sqe h(\e))\}} \la_\zeta(dz)ds\Big] \\
\aad \geq \EE\Big[L_{T,1}(\wdt u^\e) + F(\eta^{\e, \wdt u^\e, \phi^\e})+ F(\eta^{\e, \wdt u^\e, \wdt{\phi}_\e})-F(\eta^{\e, \wdt u^\e, \phi^\e}) \Big]-\e \\
\aad\quad  + \EE\Big[ \half \sum_{(i,j)\in \TT} \int_{[0,T]\times [0,\zeta]} \big[(\psi_{ij}^\e)^2 -\kappa_3  \sqe h(\e) |\psi_{ij}^\e|^3 \big] \indi_{\{|\psi_{ij}^\e|\leq \beta/(\sqe h(\e))\}} \la_\zeta(dz)ds\Big]\\

\aad\geq \EE \Big[L_{T,1}(\wdt u^\e)+\half \sum_{(i,j)\in \TT} \int_{[0,T]\times [0,\zeta]} (\psi_{ij}^\e(s,z))^2 \indi_{\{|\psi_{ij}^\e|\leq \beta/(\sqe h(\e)) \}} \la_\zeta(dz)ds \Big] \\
\aad \quad + \EE\big[F(\eta^{\e, \wdt u^\e, \phi^\e})\big] -\e -\dl -\half \beta \kappa_3 |\LL|^2 M \kappa_2(1) \\
\aad= \EE\Big[\int_{\HH_T}\half \Big[\|z\|^2 + \wdh d(\theta) \Big]\Pi^\e(d\bv) + F(\eta^{\e, \wdt u^\e, \phi^\e})  \Big]-\e -\dl -\half \beta \kappa_3 |\LL|^2 M \kappa_2(1),
\earray\eeq
where the second line follows from \eqref{up-e}. The third line is due to the definition of $\phi^\e$. Line four is from \lemref{lem:ineq} (iv). Line six is due to the following inequality
\bea
|\EE[F(\eta^{\e, \wdt u^\e, \wdt \phi_\e})- F(\eta^{\e, \wdt u^\e, \phi^\e})]| \leq 2 \|F\|_\infty \PP\Big(\phi^\e\neq \wdt \phi^\e \Big) \leq \dl,
\eea
and \eqref{psi-sq}. The last line follows from the definition of $\Pi^\e$ in \eqref{Pie} using $(\wdt u^\e, \phi^\e)$.  With the control $(\wdt u^\e,\phi^\e)$, \propref{prop:tight} implies that $(\eta^{\e, \wdt u^\e, \phi^\e}, \Pi^\e)$ is tight and any limit point $(\eta,\Pi)$ satisfies \eqref{eta-limit} and \eqref{limit-fast}. Thus $\Pi \in \CP_s(\eta)$. Taking liminf to \eqref{up-e1} yields
\bea\ad 
\liminf_{\e\to 0}-\frac{1}{h^2(\e)} \log\EE \Big[\exp\big(-h^2(\e) F(\eta^\e)\big)\Big] \\
\aad \geq \EE\Big[ \int_{\HH_T}\half \Big[ \|z\|^2 + \wdh d(\theta) \Big] \Pi(d\bv)+ F(\eta)\Big]-\dl -\half \beta \kappa_3 |\LL|^2 M \kappa_2(1) \\
\aad \geq \EE\Big[\inf_{\Pi\in \CP_s(\eta)} \Big\{\int_{\HH_T} \half \Big[ \|z\|^2 +\wdh d(\theta)\Big] \Pi(d\bv) \Big\} +F(\eta)\Big]-\dl-\half \beta \kappa_3 M \kappa_2(1) \\
\aad \geq \EE[I(\eta)+F(\eta)]-\dl -\half \beta \kappa_3 |\LL|^2 M \kappa_2(1) \\
\aad \geq \inf_{\eta\in C([0,T];\rr^d)} [I(\eta)+F(\eta)]-\dl -\half \beta \kappa_3 |\LL|^2 M \kappa_2(1),
\eea
where the first inequality is due to Fatou's lemma and \propref{prop:tight}. The second inequality uses the property that $\Pi \in \CP_s(\eta)$ a.s., and the third inequality follows from the definition of $I$. Finally, sending $\dl$ and $\beta$ to $0$ yields \eqref{laplace-up}.We finish the proof.
\end{proof} 

\section{Large deviation upper bound}\label{sec:low}
In this section, we establish the large deviation upper bound by proving the Laplace principle upper bound. To achieve this, we consider a path $\eta$ that is near the infimum of the right side of \eqref{lap-lower} and construct a sequence of controls $(u^\e, \phi^\e)$ such that $\eta^{\e, u^\e,\phi^\e}$ converges weakly to $\eta$, where $\eta^{\e,u^\e,\phi^\e}$ is defined \eqref{def-etae}. By using this sequence of controls and applying variational representations, we demonstrate the convergence of cost, leading to the result in \eqref{lap-lower}. For a nearly optimal $\eta$, let $(u,\psi,\pi)\in \CV(\eta)$ represent the near-infimum for the expression in \eqref{rate}. The control pair $(u,\psi)$ naturally defines the sequence of controls $(u^\e,\phi^\e)$, which will be used to define $\etae$ and $\Pi^\e$. Next, we need to show that any limit point $(\bar\eta, \bar\Pi)$ of the sequence $(\etae,\Pi^\e)$ solves \eqref{eta-def} and \eqref{fast-cond}. By arguing the uniqueness of solutions of \eqref{eta-def} for a given pair $(u,\psi)$, we can prove that $(\bar\eta,\bar\Pi)=(\eta,\Pi)$ a.s. To proceed, we require the following result.
\begin{prop}\label{prop:low}
Let $\eta \in C([0,T];\rr^d)$ be such that $I(\eta)<\infty$. Fix $\ga\in (0,1)$, then there exists a $\eta^* \in C([0,T];\rr^d)$ such that $
\|\eta-\eta^*\|_T: =\sup_{t\in [0,T]}\|\eta(t)-\eta^*(t)\| \leq \ga$, 
and there exists $(u^*, \psi^*=(\psi_{ij}^*), \pi^*=(\pi_i^*))\in \CV(\eta^*)$ such that (i) $\pi_i^*(s)$ has a lower bound $\underline{\mu}\dl$ for some $\dl>0$, where $\underline{\mu}$ is defined in \eqref{und-mu}; (ii) for given $(u^*,\psi^*)$, if equation \eqref{eta-def} and \eqref{fast-cond} are satisfied by any other pair $(\wdt\eta,\wdt\pi)$, then $(\wdt\eta,\wdt\pi)=(\eta^*,\pi^*)$; (iii) for each $j\in \LL, s\in [0,T]$, $
\sum_{i\in \LL}\pi_i^*(s)\Ga_{ij}(\bar X(s))=0$, and the cost associated with $(u^*,\psi^*)$ satisfies
\beq{cost-star}\barray\ad\!\!\! 
\sum_{i\in \LL} \half \int_0^T \|u_i^*(s)\|^2 \pi_i^*(s)ds +\half\! \sum_{(i,j)\in \TT} \int_0^T\!\!\! \int_0^\zeta |\psi_{ij}^*(s,z)|^2 \pi_i^*(s) \la_\zeta(dz) ds \leq I(\eta)+\ga.
\earray\eeq
\end{prop}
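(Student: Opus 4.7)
The plan is to begin with a near-optimal triple $(u,\psi,\pi) \in \CV(\eta)$ chosen so that
\[
\sum_{i \in \LL} \half \int_0^T \|u_i(s)\|^2 \pi_i(s)\,ds + \sum_{(i,j) \in \TT} \half \int_0^T\!\!\int_0^\zeta |\psi_{ij}(s,z)|^2 \pi_i(s) \la_\zeta(dz)\,ds \leq I(\eta) + \gamma/2,
\]
and then to regularize only $(u,\psi)$, since the $\pi$ component is already pinned down. Indeed, \eqref{fast-cond} uses $\Ga_{ij}$ with $\psi \equiv 1$, which equals $q_{ij}(\bar X(s))$, so \eqref{fast-cond} together with $\pi(s) \in \CP(\LL)$ and the irreducibility in (H2) forces $\pi(s) = \mu^{\bar X(s)}$ uniquely. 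In particular, \eqref{und-mu} yields $\pi_i(s) \geq \underline{\mu}$, so assertion (i) is already valid with $\delta = 1$ for any such near-optimal $\pi$.

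For (iii), I would truncate the controls: set $u^*_i(s) = u_i(s) \indi_{\{\|u_i(s)\| \leq N\}}$ and $\psi^*_{ij}(s,z) = \psi_{ij}(s,z) \indi_{\{|\psi_{ij}(s,z)| \leq N\}}$ for a large parameter $N$, take $\pi^* := \pi = \mu^{\bar X}$, and define $\eta^*$ as the unique solution of the linear integral equation \eqref{eta-def} driven by $(u^*,\psi^*,\pi^*)$; existence and uniqueness follow from Picard iteration using the bounded derivative of $\bar b$ in (H1) together with the uniform-in-$s$ bounds on $\sigma(\bar X(s),\cdot)$ and $\Phi(\bar X(s),\cdot)$ from \remref{rem:bdd-Phi}. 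Truncation lowers the two integrands pointwise, so the cost of $(u^*,\psi^*,\pi^*)$ is at most that of $(u,\psi,\pi)$, giving (iii) for every $N$.

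To obtain $\|\eta^* - \eta\|_T \leq \gamma$, subtract \eqref{eta-def} for $\eta^*$ and $\eta$ and apply Gronwall: the Lipschitz factor is $\|\nabla \bar b\|_\infty$, and the forcing is dominated by a constant multiple of
\[
\sum_{j \in \LL}\int_0^T \|u_j(s) - u^*_j(s)\|\pi_j(s)\,ds + \sum_{(i,j) \in \TT}\int_0^T\!\!\int_0^\zeta |\psi_{ij} - \psi^*_{ij}|\pi_i(s) \la_\zeta(dz)\,ds,
\]
which tends to zero as $N \to \infty$ by dominated convergence (using the $L^2(\pi_i\,ds)$ and $L^2(\pi_i\,ds\,d\la_\zeta)$ integrability guaranteed by $I(\eta) < \infty$). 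Choosing $N$ large enough then produces the desired estimate. For the uniqueness statement (ii), any pair $(\wdt\eta,\wdt\pi)$ satisfying \eqref{eta-def} and \eqref{fast-cond} with the same $(u^*,\psi^*)$ must have $\wdt\pi(s) = \mu^{\bar X(s)} = \pi^*(s)$ by uniqueness of the invariant measure under (H2); given this, \eqref{eta-def} becomes a deterministic linear integral equation in $\wdt\eta$ with Lipschitz coefficient $\nabla \bar b(\bar X(s))$ and fixed forcing, so Gronwall yields $\wdt\eta = \eta^*$.

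The main obstacle I foresee is the Gronwall step controlling $\|\eta^* - \eta\|_T$: one must verify the dominated convergence hypotheses carefully with the correct weights from $\pi$ (and the indicator $\indi_{E_{ij}(\bar X(s))}$ in the $\psi$-term), and keep track of uniform bounds on $\sigma(\bar X(s),\cdot)$ and $\Phi(\bar X(s),\cdot)$ along the compact image $\bar X([0,T])$. Once this is in place, assertions (i), (ii), and (iii) all reduce to uniqueness of the invariant measure combined with the monotonicity of the cost under truncation.
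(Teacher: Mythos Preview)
Your proposal is correct and in fact takes a genuinely simpler route than the paper. The decisive observation you make---that $\Ga_{ij}(\bar X(s)) = q_{ij}(\bar X(s))$, so that \eqref{fast-cond} reads $\pi(s)Q(\bar X(s))=0$ and hence $\pi(s)=\mu^{\bar X(s)}$ uniquely by (H2)---immediately delivers (i) with $\dl=1$ and reduces (ii) to Gronwall for a linear Volterra equation with $L^1$ forcing. The paper does not exploit this: it instead perturbs $\pi$ to $\pi^\dl=(1-\dl)\pi+\dl\,\mu^{\bar X}$, rescales $u$ and $\psi$ so as to keep the products $u_j\pi_j$ and $\psi_{ij}\pi_i$ close to the originals, and separately argues (via a reduction to controls that are piecewise constant in $z$) that one may take $\psi_{ij}\pi_i$ uniformly bounded. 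That machinery is inherited from the LDP setting of \cite{BDG18}, where the analogue of \eqref{fast-cond} genuinely depends on the control and on the state process; in the present MDP setting the constraint is simply stationarity for $Q(\bar X(s))$, and your shortcut is available. What the paper's route buys is a template that would survive if the constraint were control-dependent; what your route buys is a much shorter proof with no auxiliary boundedness claim.

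One further remark: once you know $\pi=\mu^{\bar X}$, even your truncation step is optional for the proposition as stated. Taking $(u^*,\psi^*,\pi^*)=(u,\psi,\pi)$ and $\eta^*=\eta$ already satisfies (i)--(iii) with $\|\eta^*-\eta\|_T=0$, since the forcing in \eqref{eta-def} is $L^1$ by Cauchy--Schwarz and the finite-cost assumption, so the linear equation still has a unique continuous solution. The truncation is harmless and yields bounded controls, which may be convenient in \thmref{thm:ld-lower}, but it is not needed for \propref{prop:low} itself.
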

\begin{rem}\rm{
An analogue of above proposition can be found in \cite[Theorem 4.1]{BDG18} for large deviations. Their situation is more complicated than ours in that the second restriction \eqref{ldp-Ga} in the rate function rely on both the state process $\xi$ and the control $\phi$. The merely simple small perturbation $(u, \phi,\pi)$ to $(u^\dl,\phi^\dl,\pi^\dl)$, so the small perturbation of $\xi$ to $\xi^\dl$, cannot make $(u^\dl, \phi^\dl, \pi^\dl)$ belong to $\CV(\xi^\dl)$ in general. Further modifications are needed; see \cite[Remark 4.2]{BDG18}. In our situation, the restriction \eqref{fast-cond} is independent of the state $\eta$ and the control $\psi$. Our approximation is relatively simpler.}
\end{rem}
\begin{proof}
Let $\eta\in C([0,T];\rr^d)$ be such that $I(\eta)<\infty$. Fix $\ga\in (0,1)$, let $(u,\psi,\pi)\in \CV(\eta)$ satisfy
\beq{I-ga}
\sum_{i\in \LL} \half \int_0^T \|u_i(s)\|^2 \pi_i(s)ds + \half \sum_{(i,j)\in \TT} \int_{[0,T]\times [0,\zeta]} |\psi_{ij}(s,z)|^2 \pi_i(s) \la_\zeta(dz) ds  \leq I(\eta)+\frac{\ga}{2}.
\eeq

Without loss of generality, we can assume for some $\wdt\kappa\in (0,\infty)$,
\beq{pi-psi}
\sup_{(s,z)\in [0,T]\times [0,\zeta]} \max_{(i,j)\in \TT} (\psi_{ij}(s,z)\pi_i(s)) \leq \wdt\kappa.
\eeq

We will first prove the proposition based on the claim mentioned above, with the proof of the claim deferred to the end. For $x\in \rr^d$, let $\mu(x)$ denote the stationary distribution of the fast-varying Markov chain when the slow process is fixed at $x\in \rr^d$. Given a fixed $\dl>0$, we define $
\pi_j^\dl(s):=(1-\dl) \pi_j(s)+\dl \mu_j(\bar X(s))$. 
It follows that
\beq{diff-pi-dl}
\sup_{t\in [0,T]}\sum_{j} |\pi_j^\dl(s)-\pi_j(s)|\leq 2\dl.
\eeq

It is worthy noting that, although the second condition in \eqref{fast-cond} is independent of the dynamics of $\eta$, it is still necessary to perturb the $\pi$ to $\pi^\dl$ in order to obtain a lower bound for $\pi^\dl$. 
We then define $
u_j^\dl (s):=u_j(s)\frac{\pi_j(s)}{\pi_j^\dl(s)}$, $s\in [0,T], j\in \LL.$
Thus $u_j^\dl(s)\pi_j^\dl(s)=u_j(s)\pi_j(s)$.
In addition, for $(i,j)\in \TT$ and $(s,z)\in [0,T]\times [0,\zeta]$, define
\bea\ad 
\psi_{ij}^\dl(s,z)=(1-\dl) \frac{\pi_j(s)}{\pi_i^\dl(s)}\psi_{ij}(s,z) +\dl \frac{\mu_{i}(\bar X(s))}{\pi_i^\dl(s)}.
\eea
Consequently,
\beq{diff-pi-psi}\barray\ad 
|\pi_i^\dl(s)\psi_{ij}^\dl(s,z)-\pi_i(s)\psi_{ij}(s,z)|\\
\aad \leq \dl \sup_{(s,z)\in [0,T]\times [0,\zeta]} \max_{(i,j)\in \TT}|\psi_{ij}(s,z)\pi_i(s)|+\dl |\mu_i(\bar X(s))|  \leq \dl(\wdt\kappa+1).
\earray\eeq
Define
\bea
\eta^\dl(t)\ad\!\!\!\!\! =\int_0^t \nabla \bar b(\bar X(s)) \eta^\dl(s) ds + \sum_{j\in \LL} \int_0^t \sg_j(\bar X(s)) u_j^\dl(s) \pi_j^\dl(s)ds \\
\aad \!\!\!\!\!\!\!\!\! + \sum_{(i,j)\in \TT} \int_{[0,\zeta] \times [0,T]} [\Phi(\bar X(s),j)-\Phi(\bar X(s),i)] \pi_i^\dl(s) \indi_{E_{ij}(\bar X(s))}(r) \psi_{ij}^\dl(s,z) \la_\zeta(dz)ds.
\eea
From above and our assumptions, the Gronwall inequality implies $
\sup_{t\in [0,T]} \|\eta(t)-\eta^\dl(t)\| \leq K \dl$. 
In addition, from \eqref{mu-Q}, we have for each $j\in \LL$ and $s\in [0,T]$
\bea\disp
\sum_{i\in \LL}\pi_i^\dl(s)\Ga_{ij}(\bar X(s))\ad = (1-\dl)\sum_{i\in \LL} \pi_i(s)\Ga_{ij}(\bar X(s))+\dl \sum_{i\in \LL}\mu_i(\bar X(s))\Ga_{ij}(\bar X(s)) \\
\ad = \dl \sum_{i\in \LL}\mu_i(\bar X(s)) q_{ij}(\bar X(s))=0.
\eea
Thus $\pi^\dl(s) \Ga(\bar X(s))=0$ implies $\pi^\dl(s)$ is stationary for $\Ga(\bar X(s))$. Hence $(u^\dl,\psi^\dl,\pi^\dl) \in \CV(\eta^\dl)$. Following the elementary inequality $|a|^2 - |b|^2 \leq ||a|^2-|b|^2| \leq |a-b|^2$ and previous estimates, we can get
\bea\ad 
\|\psi_{ij}^\dl(s,z)\|^2\pi_i^\dl(s)-|\psi_{ij}(s,z)|^2 \pi_i(s) \\
\aad =\|\psi_{ij}^\dl(s,z)\|^2 \pi_i^\dl(s)-\frac{1}{\pi_i^\dl(s)} \|\psi_{ij}(s,z)\pi_i(s,z)\|^2 \\
\aad \quad +\frac{1}{\pi_i^\dl(s)} \|\psi_{ij}(s,z)\pi_i(s)\|^2-|\psi_{ij}(s,z)|^2 \pi_i(s) \\
\aad \leq \frac{1}{\pi_i^\dl(s)}\|\pi_{ij}^\dl(s,z)\pi_i^\dl(s)-\psi_{ij}(s,z)\pi_i(s)\|^2 + |\psi_{ij}(s,z)|^2 \pi_i(s)\big|\frac{\pi_i(s)}{\pi_i^\dl(s)}-1\big| \\
\aad \leq \frac{\dl^2 (\wdt\kappa+1)^2}{\underline{\mu}}+ |\psi_{ij}(s,z)|^2 \pi_i(s) \frac{2\dl}{\underline{\mu}},
\eea
where the last inequality is due to \eqref{diff-pi-dl}, \eqref{diff-pi-psi}, and $\pi_i^\dl(s)\geq \underline{\mu}\dl$. Thus,
\bea\ad 
\half \sum_{(i,j)\in \TT} \int_{[0,T]\times [0,\zeta]} \|\psi_{ij}^\dl (s,z)\|^2 \pi_i(s)\la_\zeta(dz)ds \\
\aad \leq \sum_{(i,j)\in \TT} \int_{[0,T]\times [0,\zeta]} |\psi_{ij}(s,z)|^2 \pi_i(s) \la_\zeta(dz)ds+\frac{\dl^2(\wdt\kappa+1)^2}{2\underline{\mu}}\zeta T |\LL|^2+ \frac{\dl}{\underline{\mu}} M,
\eea
where $M=I(\eta)+1$. Furthermore, we note that if $\pi_i(s)\leq \pi_i^\dl(s)$, 
\bea
\|u_i^\dl(s)\|^2 \pi_i^\dl(s)=\|u_i(s)\|^2 \pi_i(s)\frac{\pi_i(s)}{\pi_i^\dl(s)}\leq \|u_i(s)\|^2 \pi_i(s),
\eea
and if $\pi_i(s)\geq \pi_i^\dl(s)$, then $\pi_i(s)\geq \mu_i(\bar X(s))\geq \underline{\mu}$. Thus, 
\bea
\|u_i^\dl(s)\|^2\pi_i^\dl(s)
\ad \leq \|u_i(s)\|^2 \pi_i(s)+ \|u_i(s)\|^2 \pi_i(s) \Big|\frac{\pi_i(s)}{\pi_i^\dl(s)}-1\Big| \\
\aad \leq \|u_i(s)\|^2 \pi_i(s)+\frac{2\dl}{\underline{\mu}} \|u_i(s)\|^2 \pi_i(s).
\eea
It follows that 
\bea\ad 
\half \sum_{i \in \LL}\int_0^T \|u_i^\dl(s)\|^2 \pi_i^\dl(s)ds \leq \sum_{i\in \LL} \half \int_0^T \|u_i(s)\|^2 \pi_i(s) ds + \frac{\dl}{\underline{\mu}} M.
\eea
Take $\dl^*=\min \{\ga \underline{\mu}/4M, \sqrt{\ga \underline{\mu}/(\zeta T |\LL|^2 (\wdt\kappa+1)^2)}\}$,  with 
\bea
(\eta^*,u^*,\psi^*,\pi^*):=(\eta^{\dl^*}, u^{\dl^*}, \psi^{\dl^*}, \pi^{\dl^*}),
\eea
then $(u^*,\psi^*,\pi^*)\in \CV(\eta^*)$. This choice of $\dl^*$ and \eqref{I-ga} yield that
\bea\ad 
\half \sum_{i\in \LL} \int_0^T \| u_i^*(s)\| \pi_i^*(s)ds + \half \sum_{(i,j)\in \TT} \int_{[0,T]\times [0,\zeta]} |\psi_{ij}^*(s,z)|^2 \pi_i^*(s) \la_\zeta(dz)ds \\
\aad \leq \half \sum_{i\in \LL} \int_0^T \|u_i(s)\|^2 \pi_i(s) ds + \half \sum_{(i,j)\in \TT} \int_{[0,T]\times [0,\zeta]} |\psi_{ij}(s,z)|^2 \pi_i(s) \la_\zeta(dz)ds+ \ga/2 \\
\aad \leq I(\eta)+\ga.
\eea

Moreover, if \eqref{eta-def} and \eqref{fast-cond} are satisfied by another pair $(\wdt\eta,\wdt\pi)$ with control $(u^*,\psi^*)$, it follows that $\wdt\pi=\wdt\pi^*$. By the Lipschitz continuity in (H1) and the boundedness of $\Phi$, we can also prove $\wdt\eta=\eta^*$. This proves statements (i)-(iii).

We now turn to prove the claim in \eqref{pi-psi}. First, we observe that the dynamics of $\eta$ remain unchanged if we redefine the control $\psi$ as follows: with $\psi$ on the right equal to the old version and $\psi$ on the left the new, for $(i,j)\in \TT$, let 
\bea
\psi_{ij}(s,z)=\left\{\barray
\Ga^{\psi_i}_{ij}(\bar X(s))/\Ga_{ij}(\bar X(s)),\ad \quad  z \in E_{ij}(\bar X(s)), \\
0,\ad \quad z \in [0,\zeta] \setminus E_{ij}(\bar X(s)).
\earray\right. 
\eea

This can be interpreted as follows: within $E_{ij}(\bar X(s))$, the controlled jump rates are constant in $r$; outside $E_{ij}(\bar X(s))$, the controlled jump rates are the same. The overall jump rates does not change, which serves to reduce the cost while maintaining the dynamics. Let $\nu(s):=\sum_{(i,j)\in \TT} \pi_i(s) \Ga_{ij}^{\psi_i}(\bar X(s))$, and $\lbar\Ga_{ij}^{\psi_i}(\bar X(s)):=\Ga_{ij}^{\psi_i}(\bar X(s))/\nu(s)$. For $\al>0$ and $(i,j)\in \TT$, let
\bea
\psi_{ij}^{\al}(s,z)=\left\{\barray\disp 
\al \frac{\lbar\Ga_{ij}^{\psi_i}(\bar X(s))}{\Ga_{ij}(\bar X(s))}, \ad \quad z \in E_{ij}(\bar X(s)), \\
\disp
\frac{\al}{\nu(s)}-1, \ad \quad  z \in [0,\zeta]\setminus E_{ij}(\bar X(s)).
\earray\right. 
\eea
When $\al=\nu(s),  \psi^\al=\psi$. We obtain 
\bea\ad 
\!\!\!\!\!\!\! \inf_{\al>0}\!\! \sum_{(i,j)\in \TT}\! \half \pi_i(s)\!\! \int_{[0,\zeta]} \!\! |\psi_{ij}^\al(s,z)|^2 \pi_i(s) \la_\zeta(dz) \leq\!\!\! \sum_{(i,j)\in \TT}\! \half \pi_i(s) \int_{[0,\zeta]}\!\! |\psi_{ij}(s,z)|^2 \pi_i(s) \la_\zeta(dz).
\eea

We now compute the infimum on the left side. Set $\varrho_{ij}:=\zeta-\la_\zeta(E_{ij}(\bar X(s)))$, then $1\leq \varrho_{ij} \leq  \zeta$. Take differentiation w.r.t. $\al$ and set the derivative to $0$, we get
\bea\ad  
\al \sum_{(i,j)\in \TT} \pi_i(s)\bigg[\bigg(\frac{\lbar\Ga^{\psi_i}_{ij}(\bar X(s))}{\Ga_{ij}(\bar X(s))}\bigg)^2 (\zeta-\varrho_{ij})+\frac{1}{\nu^2(s)} \varrho_{ij}\bigg] =\sum_{(i,j)\in \TT} \pi_i(s)\frac{\varrho_{ij}}{\nu(s)}.
\eea
Thus
\bea\disp
\al=\frac{\disp \sum_{(i,j)\in \TT} \pi_i(s)\Big[\big(\lbar\Ga^{\psi_i}_{ij}(\bar X(s))/\Ga_{ij}(\bar X(s))\big)^2 (\zeta-\varrho_{ij})+\frac{1}{\nu^2(s)} \varrho_{ij}\Big]}{\disp \sum_{(i,j)\in \TT} \pi_i(s)\varrho_{ij}/(\nu(s))}.
\eea

Thus, we can verify that there exists a constant $\al_0\in (0,\infty)$ such that for all $s\in[0,T]$, $\al\leq \al_0$. From the definition of $\psi^\al$, we observe that $\pi_i(s)\bar\Ga_{ij}^{\psi_i}(\bar X(s))\leq 1$. Therefore, $
\pi_i(s) \psi_{ij}^\al(s,z)\leq \al_0/\underline{\kappa}_Q=:\wdt\kappa$.
This confirms the claim, completing the proof of the proposition.
\end{proof}

\begin{thm}\label{thm:ld-lower}
For any continuous and bounded function $F:C([0,T];\rr^d) \to \rr$, the following Laplace principle upper bound holds:
\beq{ld-lower}
\liminf_{\e\to 0} - \frac{1}{h^2(\e)} \log\EE\Big[\exp\big(-h^{-2}(\e) F(\eta^\e)\big)\Big] \leq \inf_{\eta\in C([0,T];\rr^d)}[I(\eta)+F(\eta)].
\eeq
\end{thm}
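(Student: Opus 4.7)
The plan is to apply the variational representation \eqref{var-rep1} with controls $(u^\e,\phi^\e)$ built from a near-minimizer of $I+F$, and then to show the resulting cost and payoff converge to the target. First I would fix $\dl_0>0$ and choose $\eta_0\in C([0,T];\rr^d)$ with $I(\eta_0)+F(\eta_0)\leq \inf_{\eta}[I(\eta)+F(\eta)]+\dl_0$; we may assume $I(\eta_0)<\infty$. For $\ga\in(0,1)$, invoke \propref{prop:low} to obtain $\eta^*$ with $\|\eta^*-\eta_0\|_T\leq \ga$ and a triple $(u^*,\psi^*,\pi^*)\in \CV(\eta^*)$ satisfying the cost bound \eqref{cost-star}, the lower bound $\pi^*_i\geq \underline{\mu}\dl^*$, and the uniqueness property (ii). By the claim \eqref{pi-psi} together with the lower bound on $\pi^*$, $\psi^*$ is bounded on $[0,T]\times[0,\zeta]$; a standard truncation at level $N\uparrow\infty$ (absorbed into the $\ga$ slack) lets us further assume $u^*$ is bounded.

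Next I would construct the controls self-consistently with the controlled fast state, by setting
\bea\ad
u^\e(t):=u^*_{Y^{\e,u^\e,\phi^\e}(t-)}(t),\quad \phi^\e_{ij}(t,z):=1+\sqe h(\e)\,\psi^*_{ij}(t,z)\,\indi_{\{Y^{\e,u^\e,\phi^\e}(t-)=i\}},
\eea
for $(i,j)\in\TT$ and $\phi^\e_{ij}\equiv 1$ otherwise. Pathwise existence/uniqueness for $(\Xe,\Ye)$ is standard since the coefficients are bounded, and because $\sqe h(\e)\|\psi^*\|_\infty\to 0$ we have $\phi^\e\in[1/2,3/2]$ for $\e$ small, so $(u^\e,\phi^\e)\in\CU_b^+$. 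A Taylor expansion $\ell(1+x)=x^2/2+O(|x|^3)$ with $|x|\leq \sqe h(\e)\|\psi^*\|_\infty$ then yields
\bea\ad
\frac{1}{\e h^2(\e)}L_{T,2}(\phi^\e)=\half\!\!\sum_{(i,j)\in\TT}\int_0^T\!\!\!\int_0^\zeta |\psi^*_{ij}(s,z)|^2 \indi_{\{Y^{\e,u^\e,\phi^\e}(s-)=i\}}\la_\zeta(dz)ds + O(\sqe h(\e)),
\eea
while $L_{T,1}(u^\e)=\half\int_0^T\|u^*_{Y^{\e,u^\e,\phi^\e}(s-)}(s)\|^2 ds\leq \half\|u^*\|_\infty^2 T$, so \propref{prop:tight} applies to $\{(\etae,\Pi^\e)\}$.

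Inserting $(u^\e,\phi^\e)$ into \eqref{var-rep1} gives the one-sided inequality
\bea\ad
-\tfrac{1}{h^2(\e)}\log\EE[e^{-h^2(\e)F(\eta^\e)}]\leq \EE\big[L_{T,1}(u^\e)+\tfrac{1}{\e h^2(\e)}L_{T,2}(\phi^\e)+F(\etae)\big].
\eea
The crux is then to identify the weak limit. By \propref{prop:tight}, along a subsequence $(\etae,\Pi^\e)\to(\bar\eta,\bar\Pi)$ with $\bar\Pi$ satisfying \eqref{eta-limit}--\eqref{limit-fast}. Irreducibility in (H2) combined with \eqref{limit-fast} forces the $y$-marginal of $\bar\Pi$ at time $s$ to coincide with $\mu(\bar X(s))=\pi^*(s)$. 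The self-consistent construction makes the conditional distribution of $(\theta,z)$ given $(s,y)$ under $\Pi^\e$ degenerate at $(\psi^*_{y,\cdot}(s,\cdot)\la_\zeta,\,u^*_y(s))$, and this degenerate structure passes to $\bar\Pi$ in the weak limit; the disintegration then reduces \eqref{eta-limit} to exactly \eqref{eta-def} for the triple $(u^*,\psi^*,\pi^*)$, so by the uniqueness statement \propref{prop:low}(ii), $\bar\eta=\eta^*$ almost surely. Since the limit is deterministic, the full sequence satisfies $\etae\to\eta^*$ in probability in $C([0,T];\rr^d)$.

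Finally, the bounded convergence theorem (using boundedness of $u^*$ and $\psi^*$ together with the marginal convergence of $\Pi^\e$) yields
\bea\ad
\limsup_{\e\to 0}\EE\big[L_{T,1}(u^\e)+\tfrac{1}{\e h^2(\e)}L_{T,2}(\phi^\e)\big] = \half\!\sum_i\int_0^T\!\|u^*_i\|^2\pi^*_i ds+\half\!\!\sum_{(i,j)\in\TT}\int_0^T\!\!\!\int_0^\zeta\!\!|\psi^*_{ij}|^2\pi^*_i\la_\zeta(dz)ds,
\eea
which is $\leq I(\eta_0)+\ga$ by \eqref{cost-star}, while $\EE[F(\etae)]\to F(\eta^*)$ by continuity and boundedness of $F$. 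Since $\|\eta^*-\eta_0\|_T\leq \ga$ and $F$ is continuous, combining these gives $\liminf_{\e\to 0}-\tfrac{1}{h^2(\e)}\log\EE[e^{-h^2(\e)F(\eta^\e)}]\leq I(\eta_0)+F(\eta_0)+o_\ga(1)\leq \inf_\eta[I+F]+\dl_0+o_\ga(1)$. Sending $\ga\to 0$ and then $\dl_0\to 0$ establishes \eqref{ld-lower}. The principal technical obstacle is the limit-identification step: the self-consistent, state-feedback structure of $(u^\e,\phi^\e)$ must pass cleanly into the conditional disintegration of $\bar\Pi$, after which the uniqueness clause \propref{prop:low}(ii) closes the argument.
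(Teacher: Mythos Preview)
Your proposal is correct and follows essentially the same route as the paper: feedback controls built from a near-optimal triple obtained via \propref{prop:low}, tightness of $(\etae,\Pi^\e)$ via \propref{prop:tight}, identification of the limit $(\bar\eta,\bar\pi)=(\eta^*,\pi^*)$ through the uniqueness clause (ii), and convergence of costs. Your observation that irreducibility together with \eqref{limit-fast} already forces $\bar\pi(s)=\mu(\bar X(s))=\pi^*(s)$ is a clean shortcut; the paper instead rederives \eqref{fast-cond} for $\bar\pi$ and only then invokes uniqueness.

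The one place where the paper is more concrete than your sketch is the assertion that ``the degenerate conditional structure passes to $\bar\Pi$ in the weak limit.'' Weak convergence of $\Pi^\e$ does not automatically give convergence of disintegrations, and the relevant integrands $u^*_j(s)$, $\int_{E_{ij}(\bar X(s))}\psi^*_{ij}(s,z)\la_\zeta(dz)$ are only measurable in $s$, not continuous. The paper sidesteps this by computing directly: for instance it shows $\int_{\HH_t} h(s)^\top\indi_{\{j\}}(y)z\,\Pi^\e(d\bv)=\int_{[0,t]\times\LL} h(s)^\top u^*_j(s)\indi_{\{j\}}(y)\,[\Pi^\e]_{12}(dy\,ds)$ and then uses that $[\Pi^\e]_{12}(ds,\{y\})=\indi_{\{\Ye(s-)=y\}}\,ds$ has density in $[0,1]$, so weak convergence of $[\Pi^\e]_{12}$ upgrades to convergence against $L^2$ (or $L^1$) integrands in $s$. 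The analogous computation is carried out for the $\Phi$-term. This is exactly what your final paragraph flags as the principal obstacle; the paper's resolution is to test against these specific integrals rather than argue abstractly about conditional laws. Two minor notes: the paper first reduces to $F$ Lipschitz so that $|F(\eta^*)-F(\eta_0)|\leq K_F\ga$, and it does not truncate $u^*$ (square integrability suffices for the above convergence).
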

\begin{proof}
From \cite[Corollary 1.2.5]{DE97}, without loss of generality, the function $F$ can be assumed to be Lipschitz continuous. That is, there exists $K_F\in (0,\infty)$ such that
\bea
|F(\eta_1)-F(\eta_2)| \leq K_F \|\eta_1-\eta_2\|_T, \quad \text{for all } \eta_1, \eta_2 \in C([0,T];\rr^d). 
\eea

Moreover, the infimum on the right-hand side of \eqref{ld-lower} is finite; otherwise, \eqref{ld-lower} is trivial. Fix $\ga\in (0,1)$, the definition of infimum implies there exists a $\eta\in C([0,T];\rr^d)$  such that $
I(\eta)+F(\eta) \leq \inf_{\eta\in C([0,T];\rr^d)} [I(\eta)+ F(\eta)]+\frac{\ga}{2}$. 
Since $I(\eta)<\infty$, \propref{prop:low} then implies there exists $(u^*,\psi^*, \pi^*)\in \CV(\eta^*)$, $\eta^* \in C([0,T];\rr^d)$ such that
\beq{I-eta-star}\barray\ad 
\!\!\!\!\!\! \sum_{i\in \LL}\half \int_0^T\!\! \|u_i^*(s)\|^2 \pi_i^*(s)ds + \half \!\! \sum_{(i,j)\in \TT} \int_{[0,\zeta] \times [0,T]} \!\! |\psi_{ij}^*(s,z)|^2 \pi_i^*(s) \la_\zeta(dz)ds \leq I(\eta) + \frac{\ga}{2},
\earray\eeq
and for $i\in\LL$ and $(i,j)\in \TT$, $ 
\int_0^T \|u_i^*(s)\|^2 \pi_i^*(s) ds < \infty$ and $\int_{[0,\zeta] \times [0,T]}\|\psi_{ij}^*(s,z)\|^2 \pi_i^*(s)\\ \la_\zeta(dz)ds <\infty.$ Here, for such $(u^*,\psi^*)$, the pair $(\eta^*, \pi^*)$ is unique by statement (ii) in \propref{prop:low}. For $\beta \in (0,1]$, define
\beq{psi-star} 
\psi^{*,\e}_{ij} :=\psi^*_{ij} \indi_{\{|\psi^*_{ij}|\leq \beta/\sqe h(\e)\}}, \quad \phi^{*,\e}_{ij} :=1+\sqe h(\e) \psi^{*,\e}_{ij}, \quad (i,j)\in \TT.
\eeq

Let $(\Xe, \Ye)$ be the solution of \eqref{c-sde}, where the controls $(u^\e, \phi^\e)\in \CU_b^+$ are defined in the following feedback form: for $(i,j)\in \TT$ and $s\in [0,T]$,
\beq{u-phi}\barray
u^\e(s)\ad :=\sum_{j=1}^{|\LL|} \indi_{\{\Ye(s-)=j\}} u_j^*(s),\\
\phi_{ij}^{\e}(s)\ad :=\phi_{ij}^{*,\e}(s)\indi_{\{\Ye(s-)=i\}}+ \indi_{\{\Ye(s-)\neq i\}}.
\earray
\eeq
Using the control pair $(u^\e,\phi^\e)$ defined above, the variational representation \eqref{var-rep} gives 
\bea\ad 
-\frac{1}{h^2(\e)}\log \EE\big[-h^2(\e) F(\eta^\e)\big]\\
\aad \leq \EE\Big[L_{T,1}(u^\e)+\frac{1}{\e h^2(\e)} L_{T,2}(\phi^\e)+F (\eta^{\e, u^\e, \phi^\e})\Big].
\eea

Define $\Pi^\e\in \CP_{\text{leb}}(\HH_T)$ by \eqref{Pie}, where $\theta^\e$ is introduced by \eqref{the}. From \eqref{u-phi},
\beq{LT1-ue}\barray
L_{T,1}(u^\e)\ad =\half \int_0^T  \| u^\e(s)\|^2 ds  = \half \sum_{j=1}^{|\LL|} \int_0^t \|u_j^{*}(s)\|^2 \indi_{\{\Ye(s-)=i\}}ds \leq I(\eta)+1,
\earray\eeq
and 
\beq{LT2-phie}\barray
L_{T,2}(\phi^\e) \ad  = \half \sum_{(i,j)\in\TT}\int_{[0,T]\times [0,\zeta]} \ell(\phi_{ij}^\e(s,z))\la_\zeta(dz)ds \\
\aad \leq \kappa_3 \half \sum_{(i,j)\in \TT} \int_{[0,T]\times [0,\zeta]} |\phi_{ij}^\e(s,z)-1|^2 \la_\zeta(dz)ds   \\
\aad \leq  \kappa_3 \e h^2(\e) \half \sum_{(i,j)\in \TT}  \int_{[0,T]\times [0,\zeta]} |\psi_{ij}^{*,\e}(s,z)|^2  \indi_{\{\Ye(s-)=i\}} \la_\zeta(dz)ds  \\
\aad \leq 2\kappa_3(I(\eta)+1) \e h^2(\e).
\earray\eeq

Take $M:=\max\{I(\eta)+1, 2\kappa_3 (I(\eta)+1)\}$. It then follows from \propref{prop:tight} that $(\eta^\e, \Pi^\e)$ is a tight family of $C([0,T];\rr^d)\times \CM_F(\HH_T)$-valued random variables. Suppose $(\bar \eta, \bar \Pi)$ is a weak limit point of $\{\eta^\e, \Pi^\e\}$, equations \eqref{eta-pi} and \eqref{fast-pi} hold  with $(\eta,\Pi)$ replaced with $(\bar\eta, \bar \Pi)$. Disintegrate $\bar\Pi$ as 
\beq{dis-Pi-bar}
\bar\Pi(ds\times \{y\}\times d\theta\times dz)=ds \bar\pi_y(s)[\bar\Pi]_{34|12}(d\theta\times dz).
\eeq
In the following, we will show that
\beq{eta-equal}
(\bar\eta(s),\bar\pi(s))=(\eta^*(s), \pi^*(s)) \quad \text{ for a.e. }  s \in [0,T].
\eeq
Without loss of generality, we can assume the convergence of $(\eta^\e,\Pi^\e)$ to $(\bar\eta, \bar \Pi)$ holds along the full sequence. For every $y\in \LL, t\in [0,T]$, and any continuous map $h:[0,T]\to \rr^d$, we first demonstrate that
\beq{h-Pi-bar}
\int_{\HH_t} h(s)^\top \indi_{\{j\}}(y)z \bar\Pi(d\bv)= \int_0^t h(s)^\top u_j^*(s) \bar\pi_j(s)ds.
\eeq

From \eqref{LT1-ue} and \eqref{LT2-phie} and the argument of \propref{prop:tight}, we have for all $t\in [0,T]$ and $j\in \LL$,
\beq{cov-h-Pi}
\int_{\HH_t} h(s)^\top \indi_{\{j\}}(y)z \Pi^\e(d\bv)\to \int_{\HH_t} h(s)^\top \indi_{\{j\}}(y)z \Pi(d\bv).
\eeq
We note that by the definition of $\Pi^\e$, 
\bea\ad \!\!\!\!\!\!\! 
\int_{\HH_t} h(s)^\top \indi_{\{j\}}(y)z \Pi^\e(d\bv) \\
\aad \!\!\!\!\!\!\! = \int_{[0,t]} h(s)^\top \indi_{\{\eta^{\e,u^*,\phi^*}(s-)=j\}} u_j^*(s)ds \\
\aad\!\!\!\!\!\!\! = \int_{[0,t]\times \LL} h(s)^\top u_j^*(s) \indi_{\{j\}}(y) [\Pi^\e]_{12}(dy\times ds) \\
\aad\!\!\!\!\!\!\! \to \int_{[0,t]\times \LL}\!\! h(s)^\top u_j^*(s) \indi_{\{j\}}(y) [\bar \Pi]_{12}(dy\times ds)=\! \int_{[0,t]}\!\! h(s)^\top u_j^*(s)\bar\pi_j(s)ds
\quad \text{a.s. as } \e \to 0,
\eea
where the almost sure convergence is following
Lemma 5.2 and the square integrability of $u_j^*$. Thus we obtain \eqref{h-Pi-bar}. Applying \eqref{h-Pi-bar} to $\sg_j(\bar X(s))$ and summing over $j$, we have the following:
\bea\ad 
\sum_{j\in \LL} \int_0^t \sg_j(\bar X(s)) u_j^*(s)\bar\pi_j(s)ds=\int_{\HH_t} \sg(\bar X(s),y)z\bar\Pi(d\bv).
\eea
We also note that as $\e\to 0$, 
\beq{Pie-phi}\barray\ad 
\int_{\HH_t} \sum_{j \in \LL} [\Phi(\bar X(s),j)-\Phi(\bar X(s),y)] \Ga_{yj}^\theta(\bar X(s)) \Pi^\e(d\bv) \\
\aad \to \int_{\HH_t} \sum_{j \in \LL} [\Phi(\bar X(s),j)-\Phi(\bar X(s),y)] \Ga_{yj}^\theta(\bar X(s)) \Pi(d\bv).
\earray\eeq
By the definition of $\Pi^\e$, the following holds:
\beq{phi-Pie-exp}\barray\ad \!\!\!\! 
\int_{\HH_t} \sum_{j \in \LL} [\Phi(\bar X(s),j)-\Phi(\bar X(s),y)] \Ga_{yj}^\theta(\bar X(s)) \Pi^\e(d\bv)\\
\aad\!\!\!\! = \int_0^t \sum_{i\in\LL} \sum_{j\in \LL}[\Phi(\bar X(s),j)-\bar\Phi(\bar X(s),i)] \indi_{\{\Ye(s-)=i\}}\\
\aad\!\!\!\!\qquad\qquad\qquad \times  \int_{E_{ij}(\bar X(s))} \psi_{ij}^{\e,*}(s,z)\indi_{\{|\psi_{ij}^{\e,*}(s,z)|\leq \beta/\sqe h(\e)\}} \la_\zeta(dz)ds \\
\aad\!\!\!\! = \int_{[0,t]\times \LL} \sum_{j\in \LL}[\Phi(\bar X(s),j)-\Phi(\bar X(s),i)] \indi_{\{i\}}(y) \\
\aad\!\!\!\!\qquad\qquad\qquad\times \int_{E_{ij}(\bar X(s))} \psi_{ij}^{\e,*}(s,z)\indi_{|\psi_{ij}^{\e,*}(s,z)|\geq \beta/(\sqe h(\e))} \la_\zeta(dz)[\Pi^\e]_{12}(dy\times ds)\\
\aad\!\!\!\! \to\int_{[0,t]\times \LL} \sum_{j\in \LL}[\Phi(\bar X(s),j)-\Phi(\bar X(s),i)] \indi_{\{i\}}(y) \int_{E_{ij}(\bar X(s))}\!\! \psi_{ij}^*(s,z)\la_\zeta(dz) [\bar\Pi](dy\times ds) \\
\aad\!\!\!\! = \int_0^t \sum_{j\in \LL}[\Phi(\bar X(s),j)-\Phi(\bar X(s),i)] \bar\pi_i(s) \int_{E_{ij}(\bar X(s))} \psi_{ij}^*(s,z) \la_\zeta(dz)ds.
\earray\eeq
Combining \eqref{Pie-phi} and \eqref{phi-Pie-exp} gives 
\bea\ad 
\int_{\HH_t} \sum_{j\in \LL}[\Phi(\bar X(s),j)-\Phi(\bar X(s),y)] \Ga_{yj}^\theta(\bar X(s)) \Pi(d\bv) \\
\aad = \int_0^t \sum_{j\in \LL}[\Phi(\bar X(s),j)-\Phi(\bar X(s),i)]\bar\pi_i(s) \int_{E_{ij}(\bar X(s))} \psi_{ij}^*(s,z)\la_\zeta(dz)ds.
\eea

Thus we have shown \eqref{eta-def} is satisfied with $(\eta,\pi,u,\psi)$ replaced by $(\bar\eta, \bar\pi, u^*, \psi^*)$. We next show that \eqref{fast-cond} holds as well, i.e., $ 
\sum_{i\in \LL} \bar\pi_i(s) \Ga_{ij}(\bar X(s))=0, \text{ for a.e. } s\in [0,T] \text{ and } j\in \LL$. 
Since \eqref{fast-pi} hold with $\Pi$ replaced by $\bar \Pi$, we have 
\beq{fast-bpi}
\int_{\HH_t} \Ga_{yj}(\bar X(s))\bar \Pi(d\bv)=0, \quad \text{ for a.e. } s\in  [0,T], \; j\in \LL
\eeq
As $\e\to 0$, we note that
\bea\ad 
\int_{\HH_t} \Ga_{yj}(\bar X(s))\Pi^\e(d \bv)=\sum_{i\in \LL} \int_0^t \indi_{\{\Ye(s)=i\}} \int_{E_{ij}(\bar X(s))} \la_\zeta(dz) ds \\
\aad =\int_{[0,t]\times \LL} \int_{E_{ij}(\bar X(s))} \la_\zeta(dz)[\Pi^\e]_{12}(dy\times ds) \\
\aad \to \int_{[0,t]\times  \LL}  \int_{E_{ij}(\bar X(s))} \la_\zeta(dz)[\bar\Pi]_{12}(dy\times ds)\\
\aad= \int_0^t \sum_{i\in \LL}\pi_i(s) \int_{E_{ij}(\bar X(s))} \la_\zeta(dz)ds=\int_0^t \sum_{i\in \LL}\bar\pi_i(s)\Ga_{ij}(\bar X(s))ds.
\eea

From \eqref{fast-bpi}, we have for a.e. $s\in [0,T]$ and $j\in \LL$, $
\sum_{i\in \LL} \bar\pi_i(s) \Ga_{ij}(\bar X(s))=0$. 
Consequently, we have shown that \eqref{eta-def} and \eqref{fast-cond} hold with $(\eta,\pi,u,\psi)$ replaced  by $(\bar\eta,\bar\pi,u^*,\psi^*)$. Then \propref{prop:low} implies \eqref{eta-equal}. Finally, let us study the convergence of costs. The \eqref{u-phi} implies that 
\bea
L_{T,1}(u^\e)\ad 
=\half\int_0^T \|u^\e(s)\|^2 ds =\half \sum_{j\in \LL} \int_0^T \indi_{\{\Ye(s-)=j\}} \|u_j^*(s)\|^2 ds \\
\aad = \half \sum_{j\in \LL} \int_{[0,T]\times \LL} \|u_{j}^*(s)\|^2 \indi_{\{j\}}(y)[\Pi^\e]_{12}(dy\times ds) \\
\aad \rightarrow \half \sum_{j\in \LL} \int_{[0,T]\times \LL} \|u_j^*(s)\|^2 \indi_{\{j\}}(y) [\bar \Pi]_{12}(dy\times ds)\\
\aad = \half \sum_{j\in \LL} \int_0^T \|u_j^*(s)\|^2 \bar \pi_j(s)ds=\half \sum_{j\in \LL} \int_0^T \|u_j^*(s)\|^2 \pi_j^*(s)ds,
\eea
where the last equality is due to \eqref{eta-equal}. For the second term in the cost, the \lemref{lem:ineq} (iv) and \eqref{psi-star} yields 
\bea\ad 
\frac{1}{\e h^2(\e)}L_{T,2}(\phi^\e) \\
\ad =\frac{1}{\e h^2(\e)} \sum_{(i,j)\in \TT} \int_{[0,T]\times [0,\zeta]} \ell(\phi_{ij}^\e(s,z))\la_\zeta(dz)ds \\
\aad = \frac{1}{\e h^2(\e)}\sum_{(i,j)\in \TT} \int_{[0,T]\times [0,\zeta]} \indi_{\{\Ye(s-)=i\}} \ell(\phi_{ij}^{*,\e}(s,z))\la_\zeta(dz)ds \\
\aad \leq  \half \sum_{(i,j)\in \TT}\int_{[0,T]\times [0,\zeta]} \indi_{\{\Ye(s-)=i\}} |\psi_{ij}^{*,\e}(s,z)|^2 \la_\zeta(dz)ds \\
\aad \quad + \kappa_3\sqe h(\e) \sum_{(i,j)\in \TT}\int_{[0,T]\times [0,\zeta]} \indi_{\{\Ye(s-)=i\}} |\psi_{ij}^{*,\e}(s,z)|^3 \la_\zeta(dz)ds \\
\aad \leq \half (1+2\kappa_3 \beta) \sum_{(i,j)\in \TT} \int_{[0,T]\times [0,\zeta]} \indi_{\{\Ye(s-)=i\}} |\psi_{ij}^{*,\e}(s,z)|^2 \la_\zeta(dz)ds \\
\aad \leq \half (1+2\kappa_3 \beta) \sum_{(i,j)\in \TT} \int_{[0,T]\times [0,\zeta]} \indi_{\{\Ye(s-)=i\}} |\psi_{ij}^{*}(s,z)|^2  \la_\zeta(dz)ds
\eea
Furthermore for $j\in \LL$, we have 
\bea\ad\!\!\!\!\!\!\! 
\sum_{i\in \LL} \int_{[0,T]\times [0,\zeta]} \indi_{\{\Ye(s-)=i\}} |\psi_{ij}^*(s,z)|^2 \la_\zeta(dz)ds\\
\aad\!\!\!\!\!\!\!\! = \int_{[0,T]\times \LL} \Big(\int_{[0,\zeta]} |\psi_{ij}^*(s,z)|^2  \la_\zeta(dz)\Big) [\Pi^\e]_{12}(dy\times ds)\\
\aad\!\!\!\!\!\!\!\! \rightarrow \int_{[0,T]\times \LL} \Big(\int_{[0,\zeta]} |\psi_{ij}^*(s,z)|^2  \la_\zeta(dz)\Big) [\bar\Pi]_{12}(dy\times ds) \\
\aad\!\!\!\!\!\!\!\! = \sum_{i\in \LL} \int_{[0,T]\times [0,\zeta]} |\psi_{ij}^*(s,z)|^2 \bar \pi_i(s) \la_\zeta(dz)ds = \sum_{i\in \LL} \int_{[0,T]\times [0,\zeta]} |\psi_{ij}^*(s,z)|^2 \pi_i^*(s) \la_\zeta(dz)ds.
\eea
Thus, as $\e\to 0$, we obtain the following
\bea\ad\!\! 
L_{T,1}(u^\e)+\frac{1}{\e h^2(\e)}L_{T,2}(\phi^\e)  \\
\aad\!\!\!\!\! \to \half \sum_{j\in \LL} \int_0^T \|u_j^*(s)\|^2 \pi_j^*(s)ds+\half (1+2\kappa_3 \beta)\! \sum_{(i,j)\in \TT} \int_0^T\!\!\! \int_0^\zeta |\psi_{ij}^*(s,z)|^2 \pi_i^*(s) \la_\zeta(dz)ds,
\eea
in probability. Therefore, employing the dominated convergence theorem as $\e\to 0$ and then taking $\beta\to 0$ lead to 
\bea\ad 
\EE\Big[L_{T,1}(u^\e)+\frac{1}{\e h^2(\e)}L_{T,2}(\phi^\e)\Big]\\
\aad \to \half \sum_{j\in \LL} \int_0^T \|u_j^*(s)\|^2 \pi_j^*(s)ds +\half \sum_{(i,j)\in \TT} \int_{[0,T] \times [0,\zeta]} |\psi_{ij}^*(s,z)|^2 \pi_i^*(s) \la_\zeta(dz)ds \\
\aad\quad \leq I(\eta)+\frac{\ga}{2}.
\eea
The above inequality and the convergence of $\etae$ to $\eta^*$ gives
\bea \ad 
\limsup_{\e\to 0} - \frac{1}{h^2(\e)} \log\EE\Big[\exp\big( -h^{-2}(\e) F(\eta^\e) \big)\Big]\\ 
\ad  \leq I(\eta)+F(\eta^*)+\frac{\ga}{2} \leq I(\eta)+F(\eta) + \frac{\ga}{2}+K_F \ga \\
\aad \leq \inf_{\eta\in C([0,T];\rr^d)}[I(\eta)+F(\eta)] + \ga + K_F\ga.
\eea
Letting $\ga\to 0$, we have the desired upper bound \eqref{ld-lower}. 
\end{proof}

\section{Extension to unbounded drift}\label{sec:ext}
In this section, we prove \corref{cor:extend}. Under condition (H1)$'$, (H2) and (H3), the authors in \cite{BDG18} proved that $\{X^\e\}_{\e>0}$ satisfies a LDP with speed $\e^{-1}$ and the rate function $J$ defined in \eqref{I-LDP} in the space $C([0,T];\rr^d)$. It then implies exponential tightness of $X^\e$:
\bea\ad 
\lim_{A\to \infty} \limsup_{\e \to 0} \e \log\PP\bigg(\sup_{t\in [0,T]}\|X^\e\| > A\bigg) = -\infty.
\eea

\begin{lem}\label{lem:Xe-A1}
Suppose (H1)$'$, (H2) and (H3) hold, there exits a constant $A_1>0$ such that
\beq{Xe-inf}
\limsup_{\e\to 0} \frac{1}{h^2(\e)}\log\PP\bigg(\sup_{t\in [0,T]}\|X^\e(t)\| > A_1 \bigg) =-\infty.
\eeq
\end{lem}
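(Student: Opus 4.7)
The plan is to observe that Lemma~\ref{lem:Xe-A1} is essentially a restatement of the exponential tightness displayed immediately above its statement, read at the moderate deviation speed $h^{-2}(\e)$ instead of at the LDP speed $\e^{-1}$. The key point is the scaling hypothesis \eqref{dev-h}, which forces $\e h^2(\e)\to 0$; consequently any inequality of the form $\log\PP(\cdot)\le -M/\e$ automatically implies $h^{-2}(\e)\log\PP(\cdot)\le -M/(\e h^2(\e))\to -\infty$. Hence the moderate deviation speed is asymptotically weaker than the LDP speed, and the desired super-exponential bound is inherited from the exponential tail estimate of \cite{BDG18}.

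Concretely, I would start from the quoted statement
\[
\lim_{A\to\infty}\limsup_{\e\to 0}\e\log\PP\Big(\sup_{t\in[0,T]}\|X^\e(t)\|>A\Big)=-\infty,
\]
which, unpacked, says that for each $M\in(0,\infty)$ there exists $A(M)\in(0,\infty)$ such that $\limsup_{\e\to 0}\e\log\PP(\sup_{t\in[0,T]}\|X^\e(t)\|>A(M))\le -M$. Fix $M=1$ and set $A_1:=A(1)$; then for all sufficiently small $\e$ one has $\PP(\sup_{t\in[0,T]}\|X^\e(t)\|>A_1)\le e^{-1/(2\e)}$. Dividing the logarithm by $h^2(\e)$ and invoking $\e h^2(\e)\to 0$ from \eqref{dev-h} yields
\[
\limsup_{\e\to 0}\frac{1}{h^2(\e)}\log\PP\Big(\sup_{t\in[0,T]}\|X^\e(t)\|>A_1\Big)\le\limsup_{\e\to 0}\Big(-\frac{1}{2\e h^2(\e)}\Big)=-\infty,
\]
which is precisely \eqref{Xe-inf}.

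There is no genuine obstacle; the only subtle point worth noting is that a \emph{single} $A_1$ is enough at every admissible scale $h$, because the prefactor $1/(\e h^2(\e))$ blows up regardless of the precise rate. A self-contained alternative that avoids quoting \cite{BDG18} is available: apply the variational representation of \thmref{thm:var-rep} with the trivial control $(v,\phi)\equiv(0,1)$ to the indicator functional $F=M\indi_{\{\|X^\e\|_T>A\}}$, then combine the bounded derivative of $b$ with a standard exponential-martingale estimate on $\sqrt{\e}\int_0^{\cdot}\sigma(X^\e,Y^\e)\,dW$ and a Gronwall-type argument to obtain $\PP(\|X^\e\|_T>A)\le e^{-c(A)/\e}$ with $c(A)\to\infty$ as $A\to\infty$. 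This is strictly heavier than the route above and is unnecessary, given the LDP already invoked in the paragraph preceding the lemma.
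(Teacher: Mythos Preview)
Your proof is correct and follows essentially the same approach as the paper: obtain a bound of the form $\e\log\PP(\sup_{t\in[0,T]}\|X^\e(t)\|>A_1)\le -M$ for some fixed $A_1$ and all small $\e$, then divide by $\e h^2(\e)\to 0$ via \eqref{dev-h}. The only cosmetic difference is that the paper selects $A_1$ by applying the LDP upper bound to the closed set $\{\phi:\|\phi\|_T\ge A_1\}$ and invoking compactness of the level set $\{J\le 2M\}$, whereas you quote the exponential tightness statement (displayed just before the lemma) directly; since the paper itself derives that exponential tightness from the LDP, the two routes are equivalent.
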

\begin{proof}
For any $M>0$, the compactness of the level set $L=\{\phi: J(\phi)\leq 2M\}$ implies that there exists a constant $A_1>0$ such that for every $\phi \in L$, $\|\phi\|_T \leq A_1$ (Recall that $\|\phi\|_T=\sup_{t\in [0,T]}\|\phi(t)\|$).  Thus for every $\phi$ with $\|\phi\|_T \geq A_1$, we have $J(\phi)>2M$. Define the set $\CO_{A_1}:=\{\phi:\|\phi\|_T \geq A_1\}$, it gives $
\inf_{\phi\in \CO_{A_1}} J(\phi) \geq 2M$. 
Applying the LDP upper bound for $X^\e$ to the closed set $\CO_{A_1}$ implies
\bea\ad 
\limsup_{\e \to 0} \e \log \PP\bigg(\sup_{t\in[0,T]}\|X^\e\| \geq A_1\bigg) \\
\aad =\limsup_{\e \to 0} \e \log\PP(X^\e \in \CO_{A_1}) \leq - \inf_{\phi\in \CO_{A_1}} J(\phi) \leq -2M.
\eea
Hence there exists $\e_3>0$ such that for all $\e \in (0,\e_3)$, 
\beq{ineq-A1}
\e\log\PP\Big(\sup_{t\in [0,T]}\|X^\e(t)\| \geq A_1\Big) \leq -M
\eeq
Dividing $\e h^{2}(\e)$ to both sides of \eqref{ineq-A1} and taking $\e\to 0$, the proof is complete.
\end{proof}

For any $B > A_1$ where $A_1$ is the constant in \lemref{lem:Xe-A1} such that \eqref{Xe-inf} holds. Let $\CS_B=\{x: \|x\| \leq B\}$ be the sphere with center at the origin and radius $B$. We define the truncated version of the drift function $b$ as $b^B$:
\beq{bB} b^B(x,i):= b(x,i)\varpi^B(x), \quad \forall\; i \in \LL,
\eeq
where $\varpi^B\cd$ being a smooth function satisfying 
\bea\varpi^B(x)=\left\{\barray
1 \quad \text{ if } x\in \CS^B,\\
0 \quad \text{ if } x\in \rr^d\setminus \CS_{B+1}.
\earray\right.\eea
Denote by $(X^{\e,B}\cd, Y^{\e,B}\cd)$ the solution of \eqref{sde} with $b$ replaced by $b^B$, and $\bar X^B\cd$ the solution of ODE \eqref{bar-X} with $\bar b$ replaced by $\bar b^B$, where $\bar b^B: =\sum_{i=1}^\LL b^B(x,i)\mu_i^x$. Because of the continuity of $b$ and the finiteness of $x_0$, we note that $\bar X$ is bounded on $[0,T]$. Choose $B\geq B^*:=\|\bar X\|_T \vee A_1$, we then have $\varpi^B(\bar X(t))=1, \forall\, t\in [0,T]$. It yields $\bar b(\bar X(t))=\bar b^B(\bar X(t))$. By uniqueness of the ODE \eqref{bar-X} with $\bar b$ replaced by $\bar b^B$, we have $\bar X(t) =\bar X^B(t)$ for all $t\in [0,T]$.

Define $\eta^{\e,B}$ as
\bea\ad 
\eta^{\e,B}(t):=\frac{X^{\e,B}(t)-\bar X^B(t)}{\sqe h(\e)}= \frac{X^{\e,B}(t)-\bar X(t)}{\sqe h(\e)}\quad t\in [0,T].
\eea
\thmref{thm:main} implies that $\eta^{\e,B}$ satisfies a LDP with speed $h^{-2}(\e)$ and rate function $I_B$ defined in \eqref{rate} with the first constraint \eqref{eta-def} becomes 
\beq{eta-def-B}\barray
\eta(t)\ad = \int_0^t \nabla \bar b^B(\bar X(s)) \eta(s)ds + \sum_{j\in \LL} \int_0^t \sg_j(\bar X(s)) u_j(s) \pi_j(s)ds \\
\aad +\sum_{(i,j)\in \TT} \int_{[0,t]\times [0,\zeta]} [\Phi_B(\bar X(s),j)-\Phi_B(\bar X(s), i)] \pi_i(s) \indi_{E_{ij}(\bar X(s))}(z) \psi_{ij}(s,z) \la_\zeta(dz)ds,
\earray\eeq
where $\Phi_B$ is the solution of the Poisson equation \eqref{pq} with $b(x,i)$ and $\bar b$ replaced by $b^B(x,i)$ and $\bar b^B$, respectively. We note that second constraint \eqref{fast-cond} remains the same as $\bar X(t)=\bar X^B(t), \forall\, t \in [0,T]$ for our choice of $B$.

We then claim $\eta^{\e,B}$ is an exponential good approximation of $\eta^{\e}$ with respect to the speed $h^{-2}(\e)$; see \cite[Section 4.22]{DZ09}. That is, for any $a>0$,
\bea\ad 
\lim_{B\to \infty} \limsup_{\e \to 0} \frac{1}{h^2(\e)}\log\PP\bigg(\sup_{t\in [0,T]}\|\eta^\e(t) -\eta^{\e,B}(t)\|>a \bigg)=-\infty.
\eea
Indeed, for all $B\geq B^*$, we have 
\bea\ad
\limsup_{\e \to 0} \frac{1}{h^2(\e)}\log\PP\bigg(\sup_{t\in [0,T]} \|\eta^\e(t)-\eta^{\e,B}(t)\| > a \bigg) \\
\aad = \limsup_{\e\to 0}\frac{1}{h^2(\e)} \log\PP\bigg(\sup_{t\in [0,T]} \|X^\e(t)- X^{\e,B}(t)\|> a \sqe h(\e)\bigg) \\
\aad \leq \limsup_{\e \to 0}\frac{1}{h^2(\e)} \log\PP\bigg(\sup_{t\in [0,T]}\|X^\e(t)-X^{\e,B}(t)\|>0\bigg) \\
\aad \leq \limsup_{\e \to 0}\frac{1}{h^2(\e)} \log\PP\bigg(\sup_{t\in [0,T]} \|X^\e(t)-X^{\e,B}(t)\|>0; \sup_{t\in [0,T]}\|X^\e(t)\| \leq B \bigg) \\
\aad\quad \vee \limsup_{\e\to 0} \frac{1}{h^2(\e)}\log\PP\bigg(\sup_{t\in [0,T]} \|X^\e(t)-X^{\e,B}(t)\|>0 ; \sup_{t\in [0,T]} \|X^\e(t)\|>B\bigg) \\
\aad \leq \limsup_{\e\to 0}\frac{1}{h^2(\e)}\log\PP\bigg(\sup_{t\in [0,T]}\|X^\e(t)\| > B \bigg)\\
\aad \leq \limsup_{\e \to 0}\frac{1}{h^2(\e)} \log\PP\bigg(\sup_{t\in [0,T]}\|X^\e(t)\| > A_1 \bigg)=-\infty,
\eea
where the second to last inequality follows from the fact that under the condition $\sup_{t\in [0,T]} \|X^\e(t)\|\leq B, X^\e(t)=X^{\e,B}(t), \forall\, t\in [0,T]$ which follows by the uniqueness of SDE \eqref{sde} with $b$ replaced by $b^B$. It is similar to the argument how we obtain $\bar X(t)=\bar X^B(t)$. The last inequality follows from \eqref{Xe-inf}. Thus, $\eta^{\e,B}$ is an exponential good approximation of $\eta^{\e}$. By \cite[Theorem 4.2.16]{DZ09}, it then implies $\eta^\e$ satisfies a weak LDP with speed $h^{-2}(\e)$ and rate function $\wdt I$ given by
\beq{I-unb}
\wdt I(\eta):=\sup_{\dl>0}\liminf_{B\to \infty} \inf_{g\in B_{\eta,\dl}} I_B(g),
\eeq
where $B_{\eta,\dl}$ denotes the ball $\{g\in C([0,T];\rr^d): \sup_{t\in [0,T]} \|\eta(t)-g(t)\|\leq \dl \}$ for a given $\eta$. By \propref{prop:hat-eta}, $\wdt I$ can also be defined by 
\beq{I-unb-wdh}
\wdt I(\eta):=\sup_{\dl>0} \liminf_{B\to \infty} \inf_{g\in B_{\eta,\dl}} \wdh I_B(g),
\eeq
where $\wdh I_B(\eta)$ is defined as \eqref{wdh-I} with the dynamics \eqref{eta-pi} becomes
\beq{eta-pi-B}\barray
\eta(t)\ad = \int_{\HH_t} \nabla \bar b^B(\bar X(s)) \eta(s) \Pi(d\bv)+\int_{\HH_t} \sg(
\bar X(s),y)z \Pi(d\bv) \\
\aad \quad + \int_{\HH_t}\sum_{j\in \LL} [\Phi_B(\bar X(s),j)-\Phi_B (\bar X(s),y)]\Ga_{yj}^\theta(\bar X(s))
\Pi(d\bv),
\earray\eeq
and the constraint \eqref{fast-pi} remains the same.

In what follows, we  characterize the rate function $\wdt I$ in \eqref{I-unb-wdh} as $\wdh I$ defined in \eqref{rate} in \eqref{wdh-I} and prove the full LDP in \lemref{lem:full-LDP}. Before that, we need the stability result of the solution of  the Poisson equation \eqref{pq}.

\begin{lem}\label{lem:Phi-B-Phi}
Under conditions (H1)$'$, (H2)-(H3), for every compact set $\CK \subset \rr^d$, we have
\bea\ad 
\sup_{x\in \CK, i\in \LL} |\Phi_B(x,i)-\Phi(x,i)| \to 0, \quad \text{ as } B\to \infty.
\eea
\end{lem}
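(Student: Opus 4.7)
My plan is to exploit the fact that the Poisson equation \eqref{pq} is pointwise in the $x$ variable: for each fixed $x\in\rr^d$, it is a linear system on the finite state space $\LL$ whose data are $Q(x)$, $b(x,\cdot)$ and $\bar b(x)$ at that single point, and whose solution is pinned down uniquely by the centering condition $\sum_i \Phi(x,i)\mu_i^x=0$, as recorded in \thmref{thm:Phi}. Since the truncation $b^B(x,i)=b(x,i)\varpi^B(x)$ only modifies $b$ at points $x$ with $\|x\|>B$, on any fixed compact set $\CK$ the modification disappears once $B$ is chosen large enough, so one expects $\Phi_B$ and $\Phi$ to coincide on $\CK\times\LL$ for all sufficiently large $B$.

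First I would fix a compact set $\CK\subset\rr^d$ and let $R_\CK:=\sup_{x\in\CK}\|x\|$, which is finite by compactness. For any $B\geq R_\CK$ the cutoff satisfies $\varpi^B(x)=1$ on $\CK$ by the definition below \eqref{bB}, hence $b^B(x,i)=b(x,i)$ for every $(x,i)\in \CK\times\LL$. Averaging against the invariant measure $\mu^x$ then yields $\bar b^B(x)=\sum_{i\in\LL}b^B(x,i)\mu_i^x=\bar b(x)$ on $\CK$, so the right-hand sides $-[b^B(x,i)-\bar b^B(x)]$ and $-[b(x,i)-\bar b(x)]$ of the two Poisson equations agree for every $(x,i)\in\CK\times\LL$, and the centering conditions $\sum_i\Phi_B(x,i)\mu_i^x=0=\sum_i\Phi(x,i)\mu_i^x$ are identical.

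By the uniqueness part of \thmref{thm:Phi} applied pointwise at each $x\in \CK$, this forces $\Phi_B(x,i)=\Phi(x,i)$ for every $(x,i)\in \CK\times\LL$ whenever $B\geq R_\CK$. Consequently $\sup_{x\in \CK,\,i\in\LL}|\Phi_B(x,i)-\Phi(x,i)|$ is identically zero for all $B$ sufficiently large, which is strictly stronger than the convergence to zero claimed in the lemma. There is essentially no obstacle in this argument; one only needs to observe that the cutoff drift $b^B$ still satisfies the boundedness hypothesis needed in \thmref{thm:Phi} (it is actually bounded uniformly in $x$ under (H1)$'$ since $\varpi^B$ has compact support), so that $\Phi_B$ is well defined and the uniqueness statement is applicable.
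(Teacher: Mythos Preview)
Your argument is correct, and in fact more direct than the paper's. The paper instead invokes the explicit integral formula $\Phi(x,i)=\int_0^\infty \EE[\wdt b(x,Y_t^{x,i})]\,dt$ from \thmref{thm:Phi} and uses the uniform exponential ergodicity in (H2) to obtain the quantitative bound
\[
|\Phi_B(x,i)-\Phi(x,i)| \leq \frac{K}{\varrho}\Big(\|b^B(x,\cdot)-b(x,\cdot)\|_\infty + \|\bar b^B(x)-\bar b(x)\|\Big),
\]
valid for every $x\in\rr^d$, and then observes that the right-hand side vanishes on $\CK$ for $B$ large by the definition of $\varpi^B$. Your route bypasses the integral representation and the ergodicity constant entirely by exploiting that the Poisson equation \eqref{pq} is, for each fixed $x$, a finite linear system whose unique centered solution depends only on the data at that $x$; since the truncation leaves $Q(x)$ untouched and $b^B(x,\cdot)=b(x,\cdot)$ on $\CK$ once $B\geq R_\CK$, uniqueness forces $\Phi_B\equiv\Phi$ on $\CK\times\LL$. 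The paper's approach has the advantage of yielding a pointwise estimate valid on all of $\rr^d$ (which could be useful if one wanted convergence rates or non-compact domains), whereas your argument is shorter and makes the eventual \emph{equality} on compacta transparent, which is all the lemma needs.
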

\begin{proof}
From \thmref{thm:Phi}, (H2) and the definition of $\bar b,\bar b^B$ imply that 
\bea
|\Phi_B(x,i)-\Phi(x,i) | \ad \leq \int_0^\infty |\EE(\wdt b^B -\wdt b)(x, Y_t^{x,i})|dt \\
\ad = \int_0^\infty \big|P_t^{x}[\wdt b^B- \wdt b](x,\cdot)(i)- \mu^x ([\wdt b^B-\wdt b](x,\cdot))\big|dt \\
\ad \leq \int_0^\infty \|[\wdt b^B-\wdt b](x,\cdot)\|_\infty \|P_t^x(i,\cdot)- \mu^x\|_\text{var} dt  \\
\ad \leq K \|[\wdt b^B -\wdt b](x,\cdot)\|_\infty \int_0^\infty e^{-\varrho t} dt \\
\ad \leq \frac{K}{\varrho}  \|\wdt b^B(x,\cdot)- \wdt b(x,\cdot)\|_\infty \\
\aad \leq \frac{K}{\varrho} \Big(\|b^B(x,\cdot)-b(x,\cdot)\|_\infty +\|\bar b^B(x)- \bar b(x) \| \Big),
\eea
where $\wdt b^B:= b^B -\bar b^B$. Thus the definition of $b^B$ and $\bar b^B$ completes the proof.
\end{proof}

\begin{lem}\label{lem:wdt-I}
For the rate function $\wdt I $ defined \eqref{I-unb-wdh}, it takes the form of \eqref{wdh-I}. It follows that the rate function $\wdt I$ defined in \eqref{I-unb} takes the form of \eqref{rate}.
\end{lem}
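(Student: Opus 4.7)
The plan is to establish both inequalities $\wdt I(\eta)\le \wdh I(\eta)$ and $\wdt I(\eta)\ge \wdh I(\eta)$ for every $\eta\in C([0,T];\rr^d)$. The underlying observation that makes everything run is that $\bar X([0,T])$ is contained in a fixed compact set $K\subset \rr^d$; so once $B\ge B^*$ is large enough that $\varpi^B\equiv 1$ on an open neighborhood of $K$, one has $\nabla \bar b^B(\bar X(s))=\nabla \bar b(\bar X(s))$ uniformly in $s\in [0,T]$, and \lemref{lem:Phi-B-Phi} yields $\sup_{s\in [0,T],\,i\in \LL}\|\Phi_B(\bar X(s),i)-\Phi(\bar X(s),i)\|\to 0$ as $B\to \infty$. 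Thus the dynamics \eqref{eta-pi-B} differ from \eqref{eta-pi} only by an error that vanishes as $B\to\infty$.

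For the upper bound, I would fix $\rho>0$ and select a near-optimizer $\Pi\in \CP_s(\eta)$ with $\int_{\HH_T}\tfrac12[\|z\|^2+\wdh d(\theta)]\Pi(d\bv)\le \wdh I(\eta)+\rho$. Using this \emph{same} $\Pi$, define $\eta_B$ as the unique solution of \eqref{eta-pi-B} (existence via Gronwall under (H1)$'$). Subtracting the fixed-point equations for $\eta_B$ and $\eta$ and applying Gronwall once more, together with the uniform convergence $\Phi_B\to \Phi$ on $K\times \LL$ and the integrability $\int_{\HH_T}\sum_j \theta_j([0,\zeta])\Pi(d\bv)<\infty$ (cf.\ \eqref{b-th-pi}), gives $\|\eta_B-\eta\|_T\to 0$. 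Hence for any $\dl>0$ and all large $B$, $\eta_B\in B_{\eta,\dl}$ and $\Pi$ is admissible for $\wdh I_B(\eta_B)$, so $\wdh I_B(\eta_B)\le \wdh I(\eta)+\rho$. Sending $B\to \infty$, then $\dl\downarrow 0$, then $\rho\downarrow 0$ yields $\wdt I(\eta)\le \wdh I(\eta)$.

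For the lower bound, assume $M:=\wdt I(\eta)<\infty$ and fix $\rho>0$. For every $\dl>0$, choose $B_n=B_n(\dl)\to \infty$ and $g_n^\dl\in B_{\eta,\dl}$ with admissible $\Pi_n^\dl$ (in the $B_n$-dynamics sense) satisfying $\int_{\HH_T}\tfrac12[\|z\|^2+\wdh d(\theta)]\Pi_n^\dl\le M+2\rho$. Running the same precompactness argument as in \propref{prop:comp-le} (with $M+2\rho$ in place of $M$) gives tightness of $\{(g_n^\dl,\Pi_n^\dl)\}_n$ in $C([0,T];\rr^d)\times \CM_F(\HH_T)$; extract a limit $(g^\dl,\Pi^\dl)$ with $\|g^\dl-\eta\|_T\le \dl$. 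I would then pass to the limit in \eqref{eta-pi-B} using that $\nabla \bar b^{B_n}(\bar X(s))=\nabla \bar b(\bar X(s))$ once $B_n\ge B_0$, the uniform convergence $\Phi_{B_n}\to \Phi$ on $K\times \LL$ provided by \lemref{lem:Phi-B-Phi}, and the uniform-integrability arguments of \eqref{conv-Phi-Ga}-\eqref{uni-arg}; this identifies $\Pi^\dl\in \CP_s(g^\dl)$. Fatou combined with the lower semicontinuity of $\wdh d$ then gives $\wdh I(g^\dl)\le M+2\rho$.

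Finally, letting $\dl\downarrow 0$ the paths $g^\dl$ converge uniformly to $\eta$, and the goodness of $\wdh I$ (equivalently $I$, by \propref{prop:comp-le} together with \propref{prop:hat-eta}) supplies the lower semicontinuity needed to conclude $\wdh I(\eta)\le \liminf_{\dl\downarrow 0}\wdh I(g^\dl)\le M+2\rho$; sending $\rho\downarrow 0$ closes the argument. The rate function $\wdt I$ of \eqref{I-unb} then coincides with $I$ of \eqref{rate} by a direct application of \propref{prop:hat-eta} to both sides. The principal obstacle I anticipate is the simultaneous limit $n\to\infty$, $B_n\to\infty$, $g_n^\dl\to g^\dl$ in the non-linear term $\sum_j[\Phi_{B_n}(\bar X(s),j)-\Phi_{B_n}(\bar X(s),y)]\Ga_{yj}^\theta(\bar X(s))$, which must be resolved by splitting off $(\Phi_{B_n}-\Phi)$ (controlled by the uniform estimate on the compact $\bar X([0,T])$) from the $\Phi$-piece (handled by the continuous-and-bounded mapping argument already used in \propref{prop:comp-le}), combined with Skorokhod representation on the jointly tight family $\{(g_n^\dl,\Pi_n^\dl)\}$.
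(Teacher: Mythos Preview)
Your proposal is correct, and the overall strategy matches the paper's, but there are two places where the paper's argument is sharper than yours.

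\textbf{Upper bound.} You define $\eta_B$ as the solution of \eqref{eta-pi-B} with the near-optimal $\Pi$, and then use Gronwall and \lemref{lem:Phi-B-Phi} to show $\|\eta_B-\eta\|_T\to 0$. The paper observes something stronger: since the Poisson equation \eqref{pq} at a fixed point $x$ depends only on $b(x,\cdot)$ and $Q(x)$, once $B$ is large enough that $b^B\equiv b$ on the range of $\bar X$ one has $\Phi_B(\bar X(s),j)=\Phi(\bar X(s),j)$ \emph{exactly} (not merely asymptotically). Hence \eqref{eta-pi-B} and \eqref{eta-pi} coincide, $\eta$ itself is admissible, and $\wdh I_B(\eta)\le \wdh I(\eta)+\e$ with no approximation step needed. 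Your route works, but the shortcut is worth knowing.

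\textbf{Lower bound.} The paper takes a single diagonal sequence $B_n\to\infty$, $\dl_n\downarrow 0$, $g_n\in B_{\eta,\dl_n}$, so that $g_n\to \eta$ automatically; it then extracts a limit $\bar\Pi$ of the associated $\Pi_n$ and shows directly that $\bar\Pi\in\CP_s(\eta)$ by the same $\CR_4^n+\CR_5^n$ splitting you anticipate. Your two-stage extraction (first $n\to\infty$ at fixed $\dl$ to get $(g^\dl,\Pi^\dl)$, then $\dl\downarrow 0$ using lower semicontinuity of $\wdh I$) is also valid, but it relies on the goodness of $\wdh I$ under (H1)$'$, which in the paper is established in \lemref{lem:full-LDP}(i), after the present lemma. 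That is not circular (the proof of \lemref{lem:full-LDP}(i) does not use \lemref{lem:wdt-I}), but you should state explicitly that the compactness argument of \propref{prop:comp-le} carries over under (H1)$'$ because $\Phi$ only enters through $\Phi(\bar X(\cdot),\cdot)$, which is bounded on the compact range of $\bar X$. The paper's diagonal argument avoids this dependency entirely.
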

\begin{proof}
First, we prove $\wdt I(\eta) \leq \wdh I(\eta)$. Assume $\wdh I(\eta)<\infty$, otherwise it is trivial. Fix $\e>0$. By definition of $\wdh I(\eta)$ in \eqref{wdh-I}, there exists $\Pi\in \CP_s(\eta)$ such that \eqref{eta-pi} and \eqref{fast-pi} hold and 
\bea
C(\Pi) := \int_{\HH_T} \half (\|z\|^2+ \wdh d(\theta)) \Pi(d\bv) \leq \wdh I(\eta)+\e
\eea
Since $\bar X$ is continuous on $[0,T]$, its range is in a compact set. We choose $B$ large enough such that $b^B=b$ on a fixed compact set $\CK$ containing the range of $\bar X$. It then implies $\Phi_B=\Phi$ on this compact set. Thus, $\Pi \in \CP_s(\eta)$ where $\eta$ now satisfies the equation \eqref{eta-pi-B} and the cost does not change. Therefore, $
\wdh I_B(\eta) \leq C(\Pi) \leq \wdh I(\eta)+\e$. 
For every $\dl>0$ and $B$ large enough, we have 
\bea\ad
\inf_{g\in B_{\eta,\dl}} \wdh I_B(g) \leq \wdh I_B(\eta)\leq \wdh I(\eta)+\e.
\eea
Taking $\liminf_{B\to \infty}$ and supermum over $\dl>0$ to above inequality, it gives $\wdt I(\eta) \leq \wdh I(\eta) +\e$ by the definition \eqref{I-unb-wdh}. Since $\e>0$ is arbitrary, $\wdt I(\eta) \leq \wdh I(\eta)$.

We now prove the reversed inequality $\wdh I(\eta)\leq \wdt I(\eta)$. Assume $\wdt I(\eta)<\infty$. By definition of $\wdt I$ in \eqref{I-unb-wdh}, there exists sequences $B_n \to \infty, \dl_n \downarrow 0$ and $g_n \in B_{\eta,\dl_n}$ such that
\beq{I-Bn}
\lim_{n \to \infty} \wdh I_{B_n}(g_n) = \wdt I(\eta).
\eeq
For each $n\in \NN$, the definition of $\wdh I_{B_n}(g_n)$ implies there exists a $\Pi_n\in \CP_s(g_n)$ such that 
\beq{def-CPi-n}
C(\Pi_n):= \int_{\HH_T} \half (\|z\|^2 + \wdh d(\theta)) \Pi_n(d\bv) \leq \wdh I_{B_n}(g_n)+1/n,
\eeq
and $g_n$ satisfies the equation \eqref{eta-pi-B} with $\Pi, B$ replaced by $\Pi_n, B_n$, respectively. That is,
\beq{gn}\barray
g_n(t)\ad = \int_{\HH_t} \nabla \bar b^{B_n}(\bar X(s)) g_n(s) \Pi_n (d\bv)+\int_{\HH_t} \sg(
\bar X(s),y)z \Pi_n(d\bv) \\
\aad \quad + \int_{\HH_t}\sum_{j\in \LL} [\Phi_{B_n}(\bar X(s),j)-\Phi_{B_n}(\bar X(s),y)]\Ga_{yj}^\theta(\bar X(s))
\Pi_n(d\bv).
\earray\eeq
For sufficient large $n$, the equation \eqref{I-Bn} and \eqref{def-CPi-n} imply
\beq{C-Pi-n}
C(\Pi_n) \leq \wdh I_{B_n}(z_n)+1/n \leq \wdt I(\eta)+2/n \leq \wdt I(\eta)+2 <\infty.
\eeq
Thus $C(\Pi_n)$ admits a uniform bound independent of $n$. It follows that $\{\Pi_n\}$ is pre-compact in $\CP_{\text{leb}}(\HH_T)$. Then there exists a convergent subsequence (still denoted by $n$) such that $\Pi_n \to \bar \Pi$ weakly for some $\bar \Pi$. By the lower semicontinuity of the cost functional, \eqref{I-Bn} and \eqref{def-CPi-n} yield
\bea\ad
C(\bar \Pi) \leq \liminf_{n\to \infty} C(\Pi_n) \leq  \liminf_{n\to \infty} \Big(\wdh I_{B_n}(g_n) + 1/n\Big)=\wdt I(\eta).
\eea
If we proved $\bar\Pi\in \CP_s(\eta)$, we can get $\wdh I(\eta)=\inf_{\bar \Pi\in \CP_s(\eta)} C(\bar\Pi) \leq \wdt I(\eta)$, thus finish the proof.

Let us proceed to prove $\bar \Pi \in \CP_s(\eta)$.
Because $g^n$ satisfies the equation \eqref{gn} and $g^n$ converges to $\eta$ uniformly on $[0,T]$ as $g_n\in B_{\eta,\dl_n}$ and $\dl_n \downarrow 0$ when $n\to \infty$, it is sufficient to prove $\eta$ satisfies the un-truncated equation \eqref{eta-pi} with $\Pi$ replaced by $\bar\Pi$. From \eqref{gn}, we first note that
\bea\ad 
\int_{\HH_t} \nabla \bar b^{B_n}(\bar X(s)) g_n(s) \Pi_n(d\bv)- \int_{\HH_t} \nabla \bar b(\bar X(s)) \eta(s) \bar \Pi(d\bv) \\
\aad =\int_{\HH_t} \big[ \nabla \bar b^{B_n}(\bar X(s))- \nabla \bar b(\bar X(s))\big] g_n(s) \Pi_n (d\bv) \\
\aad \quad + \int_{\HH_t} \nabla\bar b(\bar X(s)) \big[g_n(s) -\eta(s) \big] \Pi_n(d\bv) + \int_{\HH_t} \nabla \bar b(\bar X(s)) \eta(s) \big[\Pi_n (d\bv ) - \bar \Pi(d\bv) \big] \\
\aad =: \CR_1^n+ \CR_2^n + \CR_3^n.
\eea
For $\CR_1^n$, for sufficiently large $n$ (thus $B_n$), we have $\bar b^{B_n}(\bar X(s)) =\bar b(\bar X(s))$,  which implies $\nabla \bar b^{B_n}(\bar X(s))=\nabla \bar b(\bar X(s))$. Hence $g_n\in B_{\eta,\dl_n}$ with given $\eta\in C([0,T];\rr^d)$ implies $\CR_1^n \to 0$ as $n\to \infty$. The uniform convergence of $g_n$ to $\eta$ and the boundedness of $\nabla \bar b(\bar X(s))$ yield that $\CR_2^n \to 0$ as $n\to \infty$. The $\CR_3^n \to 0$ followed by the weak convergence of $\Pi_n$ to $\bar\Pi$.

Moreover, the uniform integrability argument in \eqref{uni-arg} implies
\bea\ad 
\int_{\HH_t} \sg(\bar X(s),y) z \Pi_n(d\bv) \to \int_{\HH_t} \sg(\bar X(s),y)z \bar \Pi(d\bv)
\eea
Furthermore, we note that 
\bea\ad 
\int_{\HH_t}\sum_{j\in \LL} [\Phi_{B_n}(\bar X(s),j)-\Phi_{B_n} (\bar X(s),y)]\Ga_{yj}^\theta(\bar X(s))
\Pi_n(d\bv)\\
\aad \quad  - \int_{\HH_t}\sum_{j\in \LL} [\Phi(\bar X(s),j)-\Phi (\bar X(s),y)]\Ga_{yj}^\theta(\bar X(s)) \bar \Pi(d\bv) \\
\aad = \int_{\HH_t}\sum_{j\in \LL} \big[(\Phi_{B_n}-\Phi)(\bar X(s),j)-(\Phi_{B_n}-\Phi) (\bar X(s),y) \big] \Ga_{yj}^\theta(\bar X(s))
\Pi_n(d\bv) \\
\aad \quad + \int_{\HH_t}\sum_{j\in \LL} [\Phi(\bar X(s),j)-\Phi (\bar X(s),y)] \Ga_{yj}^\theta(\bar X(s))
\big[\Pi_n(d\bv) -\bar \Pi(d\bv)\big]=: \CR_4^n+ \CR_5^n.
\eea
For $\CR_4^n$, \lemref{lem:Phi-B-Phi} and the H\"{o}lder inequality yield
\bea
\CR_4^n \ad \leq 2\sup_{s\in [0,T],j\in \LL} \|\Phi_{B_n}(\bar X(s),j)-\Phi(\bar X(s),j)\| \sum_{j\in \LL} \int_{\HH_t}  \Ga_{yj}^\theta(\bar X(s))\Pi_n(d\bv) \\
\aad \leq 2\sup_{s\in [0,T],j\in \LL} \|\Phi_{B_n}(\bar X(s),j)-\Phi(\bar X(s),j)\| \\
\aad \qquad\quad  \times \sum_{j\in \LL} \Big(\int_{\HH_t} 1^2 \Pi_n(d\bv)\Big)^{1/2} \Big(\int_{\HH_t} |\Ga_{yj}^\theta(\bar X(s))|^2 \Pi_n(d\bv)\Big)^{1/2} \\
\aad \leq 2\sup_{s \in [0,T], j\in \LL}  \|\Phi_{B_n}(\bar X(s),j)-\Phi(\bar X(s),j)\| \\
\aad \qquad \quad \times \Big(\sum_{j\in \LL}\int_{\HH_T} \Pi_n(d\bv)\Big)^{1/2} \Big(\sum_{j\in \LL} \int_{\HH_T} |\Ga_{yj}^\theta(\bar X(s))|^2 \Pi_n(d\bv)\Big)^{1/2} \\
\aad \leq 2\sqrt{T\zeta (\wdt I(\eta)+2)} \sup_{s \in [0,T], j\in \LL}  \|\Phi_{B_n}(\bar X(s),j)-\Phi(\bar X(s),j)\| \to 0 \quad \text{ as } n \to \infty,
\eea
where the last inequality follows from the argument in \eqref{b-th-pi} and \eqref{C-Pi-n}. For $\CR_5^n$, it converges to zero by the weak convergence of $\Pi_n$ to $\bar\Pi$ as in \eqref{conv-Phi-Ga}. Consequently, $\eta$ satisfies the \eqref{eta-pi} with $\Pi$ replaced by $\bar \Pi$. The proof is complete.
\end{proof}

\begin{lem}\label{lem:full-LDP}
Under assumptions (H1)$'$, (H2) and (H3), we have 
\begin{itemize}
\item[(i)] $I\cd$ is a good rate function
\item[(ii)] For every closed set $F$, 
\bea \ad 
\inf_{\eta\in F} I(\eta) \leq \limsup_{B\to \infty} \inf_{\eta \in F} I_B(\eta).
\eea  
\end{itemize}
Therefore, $\eta^\e$ satisfies the full LDP and thus \corref{cor:extend} hold. \end{lem}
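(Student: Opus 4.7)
The plan is to handle (i) and (ii) in sequence, with (ii) being the substantive step that unlocks the Dembo--Zeitouni upgrade from a weak to a full LDP via \cite[Theorem 4.2.16]{DZ09}.

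\emph{Part (i).} For goodness of $I$ under (H1)$'$, the key observation is that although $b$ is no longer globally bounded, the averaged trajectory $\bar X$ stays in a fixed compact set $\CK \subset \rr^d$ on $[0,T]$ (by linear growth and finite horizon). Every $\Pi \in \CP_s(\eta)$ appearing in the rate function evaluates $\Phi$, $\nabla \bar b$ and $\sg$ only along $\bar X(s)$, so one can replace ``global boundedness of $\Phi$'' by ``boundedness of $\Phi(\bar X(s),\cdot)$ for $s \in [0,T]$'' throughout the proof of \propref{prop:comp-le}. The two uniform estimates \eqref{b-z-pi}--\eqref{b-th-pi} carry over verbatim, and the $C^{1/2}$-equicontinuity bound establishing pre-compactness of $\{\eta_n\}$ depends only on those and on $\sup_{s\in [0,T],\,i\in\LL}\|\Phi(\bar X(s),i)\|$, which is finite by \thmref{thm:Phi} applied on $\CK$. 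Hence every level set $\{\eta: I(\eta)\leq M\}$ is compact.

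\emph{Part (ii).} We may assume $L := \limsup_{B \to \infty} \inf_{\eta \in F} I_B(\eta) < \infty$, else the claim is trivial. Pick $B_n \uparrow \infty$ and $\eta_n \in F$ with $I_{B_n}(\eta_n) \to L$, and for each $n$ select, using the equivalence $I_{B_n}=\wdh I_{B_n}$ from \propref{prop:hat-eta} applied to the truncated system, a measure $\Pi_n$ from the analogue of $\CP_s$ built from the $\Phi_{B_n}$-dynamics \eqref{eta-pi-B} such that
\[
C(\Pi_n) := \int_{\HH_T}\tfrac{1}{2}\big[\|z\|^2+\wdh d(\theta)\big]\,\Pi_n(d\bv) \leq I_{B_n}(\eta_n)+\tfrac{1}{n}.
\]
Then $\sup_n C(\Pi_n)<\infty$, so the Markov-inequality argument used to derive \eqref{Pi-n} yields tightness of $\{\Pi_n\}$ in $\CP_{\text{leb}}(\HH_T)$; combined with the $C^{1/2}$-equicontinuity bounds coming from \eqref{b-z-pi}--\eqref{b-th-pi} this gives pre-compactness of $\{\eta_n\}$ in $C([0,T];\rr^d)$. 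Pass to a subsequence converging to $(\eta,\bar\Pi)$; since $F$ is closed, $\eta\in F$.

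The main obstacle, and precisely what the stability results \lemref{lem:Phi-B-Phi} and \lemref{lem:wdt-I} are designed to overcome, is to verify that $\bar\Pi\in\CP_s(\eta)$ for the \emph{un}-truncated dynamics. Here the compact range of $\bar X$ is crucial: for all $n$ sufficiently large, $b^{B_n}$ agrees with $b$ on an open neighborhood of $\{\bar X(s):s\in[0,T]\}$, so $\nabla\bar b^{B_n}(\bar X(s))=\nabla\bar b(\bar X(s))$ and, by uniqueness of the Poisson equation, $\Phi_{B_n}(\bar X(s),\cdot)=\Phi(\bar X(s),\cdot)$ for all $s\in[0,T]$. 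Combining this exact identification with the weak convergence $\Pi_n\to\bar\Pi$, the uniform-integrability trick \eqref{uni-arg} for the $\sg(\bar X(s),y)z$ term, the bounded-continuous convergence \eqref{conv-Phi-Ga} for the $\Phi\,\Ga^\theta$ term, and the uniform convergence $\eta_n\to\eta$, one verifies that $\eta$ satisfies \eqref{eta-pi} with $\Pi$ replaced by $\bar\Pi$, while \eqref{fast-pi} passes through unchanged. Hence $\bar\Pi\in\CP_s(\eta)$, and by the lower semicontinuity of $(z,\theta)\mapsto \tfrac12\|z\|^2+\tfrac12\wdh d(\theta)$,
\[
I(\eta)=\wdh I(\eta)\leq C(\bar\Pi)\leq \liminf_{n\to\infty} C(\Pi_n)\leq L.
\]
Thus $\inf_{\eta\in F}I(\eta)\leq L$, proving (ii). With (i) and (ii) in hand, \cite[Theorem 4.2.16]{DZ09} upgrades the weak LDP coming from the exponentially good approximation $\eta^{\e,B}\to\eta^\e$ to a full LDP with good rate function $I$, completing the proof of \corref{cor:extend}.
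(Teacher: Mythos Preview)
Your proof is correct and follows essentially the same route as the paper. The only difference is packaging: the paper proves the key inequality $I(\bar\eta)\leq\liminf_n I_{B_n}(\eta_n)$ by appealing to \lemref{lem:wdt-I}, whereas you inline that argument directly, working with the near-optimal measures $\Pi_n$ and passing to the limit. Your use of the exact identification $\Phi_{B_n}(\bar X(s),\cdot)=\Phi(\bar X(s),\cdot)$ for all large $n$ (since $b^{B_n}=b$ on a neighborhood of the range of $\bar X$) is in fact slightly cleaner than the convergence argument via \lemref{lem:Phi-B-Phi} that the paper uses in the corresponding step of \lemref{lem:wdt-I}.
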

\begin{proof}
The proof of (i) follows the argument of \propref{prop:comp-le}. The different things are that under the Lipschitz condition of $b(\cdot,i), i\in \LL$, the corresponding solution of Poisson equation $\Phi(\cdot,i), i\in \LL$ has a linear growth; see \eqref{Phi-bdd}. But since $\Phi(\cdot,i), i\in \LL$ acts on the deterministic path $\bar X$, the boundedness of $\eta_n$ in \eqref{K-eta} still hold. The rest of the proof remains the same, thus details are omitted. 

We now proceed to prove (ii). Let $F\subset C([0,T];\rr^d)$ be closed. Put \beq{ell}
\varkappa:= \limsup_{B\to \infty} \inf_{\eta\in F} I_B(\eta).
\eeq
Assume $\varkappa <\infty$, otherwise it is trivial. By definition of $\limsup$, there is a sequence $B_n\to \infty$ such that
\bea\ad 
\lim_{n\to \infty} \inf_{\eta \in F} I_{B_n}(\eta)= \varkappa
\eea
For each $n\in \NN$, the definition of infimum implies that we can choose $\eta_n \in F$ such that $I_{B_n}(\eta_n) \leq \inf_{\eta\in F} I_{B_n}(\eta)+1/n$. Thus for sufficiently large $n$, $I_{B_n}(\eta_n)$ is bounded by $\varkappa+1$. The compactness of level set in \propref{prop:comp-le} implies that $\{\eta_n\}$ is  relatively compact. Thus there exists a convergent subsequence (still denoted by $n$) such that $\eta_n \to \bar \eta$ uniformly on $[0,T]$ for some $\bar \eta$. Since $\eta_n \in F$ and $F$ is a closed set, we have $\bar \eta\in F$. The argument in \lemref{lem:wdt-I} implies that 
\bea\ad 
I(\bar \eta) \leq \liminf_{n\to \infty} I_{B_n}(\eta_n) = \lim_{n\to \infty}\Big(\inf_{\eta\in F} I_{B_n}(\eta)+1/n\Big)=\varkappa.
\eea
Therefore, we have 
\bea\ad 
\inf_{\eta \in F} I(\eta) \leq I(\bar \eta) \leq \varkappa = \limsup_{B\to \infty} \inf_{\eta\in F} I_B(\eta).
\eea
Thus we finished the proof of (i) and (ii), yielding the full LDP by \cite[Theorem 4.2.16]{DZ09}.
\end{proof}

\section{Discussions and remarks}\label{sec:dis}
\subsection{Jump diffusions in fast-varying Markovian environment}
The result in this paper can be extended to jump diffusion in the fast-varying Markovian  environment. Specifically, we can consider
\beq{jump}\barray
dX^\e(t)\ad = b(X^\e(t), Y^\e(t))dt+ \sqrt{\e} \sg(X^\e(t), Y^\e(t))dW(t)\\
\aad\quad + \e \int_{[0,t]\times \Upsilon} \chi(X^\e(s-), Y^\e(s-), \xi) N^{1/\e}(ds, d\xi),
\earray\eeq
where $b: \rr^d\times \LL \to \rr^d$, $\sg: \rr^d\times\LL \to \rr^{d\times d}$, and $\chi:\rr^d \times \LL \times \Upsilon \to \rr^d$ are suitable functions, $W$ is a standard $d$-dimensional Wiener process, $N^{1/\e}$ is a PRM on $\Upsilon_T := [0,T]\times \Upsilon$ with intensity measure $\e^{-1} \nu_T := \e^{-1}\la_T \times \nu$, where $\Upsilon$ is s locally compact Polish space and $\nu$ is a locally finite measure on $\Upsilon$, $\la_T$ is the Lebesgue measure on $[0,T]$. The $Y^\e\cd$ is the fast-varying Markov chain on finite state space $\LL$ satisfying conditions (H2) and (H3). Besides the condition (H1)$'$, we also assume 
\smallskip

\noindent{(H4)} For each $i \in \LL$ and $\xi\in \Upsilon$, the function $\chi(\cdot,i,\xi)$ is a continuously differentiable function with bounded derivative. Moreover, for each $i\in \LL$,  there exits a function $L_\chi(i,\cdot), M_\chi(i,\cdot) \in L^1(\nu)\cap L^2(\nu)$ such that for all $x,x'\in \rr^d$ and $\xi\in \Upsilon$,
\bea
\|\chi(x,i,\xi)-\chi(x',i,\xi)\| \ad \leq L_\chi(i, \xi) \|x-x'\|, \\
\|\chi(x,i,\xi)\| \ad \leq M_\chi(i,\xi) (1+\|x\|).
\eea

\noindent{(H5)} The function $L_\chi, M_\chi$ are in the class $\CH^\varsigma$ for some $\varsigma>0$, where $\CH^\varsigma$ is defined as 
\beq{CH-varthe}
\CH^\varsigma:=\bigg\{h: \LL\times \Upsilon \to \rr: \forall \; Z \in \CB(\Upsilon) \text{ with } \nu(Z)<\infty, \int_{Z} \exp(\varsigma |h(i,\xi)|) \nu(d\xi) <\infty, \forall\, i\in \LL \bigg\}.
\eeq

\begin{rem}
The condition (H4)-(H5) are motivated by \cite[Condition 2.6, p. 1733]{BDG16}. 
\end{rem}

We denote by $\bar X$, the solution of the following ODE 
\beq{ode-jump}
d\bar X(t) = \bar b(\bar X(t))dt+ \int_{\Upsilon} \bar \chi(\bar X(s), \xi) \nu(d\xi)dt,
\eeq
where $\bar b$ and $\bar \chi$ are defined as 
\bea\ad 
\bar b(x):=\sum_{i\in \LL} b(x,i) \mu_i^x, \quad \bar \chi(x,\xi) :=\sum_{i\in \LL} \chi(x,i,\xi)\mu_i^x, \quad \forall, x\in \rr^d, \xi\in \Upsilon.
\eea
Under (H1)$'$, (H2)-(H4), there exist unique solutions $X^\e$ and $\bar X$ that satisfy \eqref{jump} and \eqref{ode-jump}, respectively. Letting  $\e \to 0$, one can prove that $X^\e$ in \eqref{jump} converges to $\bar X$ in suitable sense. We refer to the work \cite{YY04} for this averaging result.

Define $\eta^\e$ as in \eqref{eta}, we are interested in studying the large deviations of $\eta^\e$ as $\e \to 0$, which is the moderate deviations of $X^\e$ in \eqref{jump}. 

Let $\Phi$ be the solution of the Poisson equation of \eqref{pq}. For each fixed $x\in \rr^d$ and fixed $\xi\in \Upsilon$, let $\Psi$ be the solution of the Poisson equation
\beq{pq-chi}
Q(x)\Psi(x,\cdot,\xi)(i)= -[\chi(x,i,\xi)-\bar\chi(x,\xi)], \quad \text{ with } \sum_{i=1}^{|\LL|} \Psi(x,i,\xi)\mu_i^x= 0.
\eeq
Under conditions (H4)-(H5), the Poisson equation \eqref{pq-chi} has a unique solution $\Psi$. Then, we would obtain the following moderate deviation principle for the solution of the jump diffusion $X^\e$ in \eqref{jump}. 
\begin{thm}\label{thm:mdp-jump}
Assume condition (H1)$'$, (H2)-(H5) hold. Then the solution $\{X^\e\}_{\e>0}$ of \eqref{jump} satisfies the moderate deviation principle on $C([0,T];\rr^d)$ with speed $h^{-2}(\e)$ and rate function $I_{\text{jump}}$ defined as 
\bea
I_{\text{jump}}(\eta)\ad = \inf_{(u,\wdh \psi, \wdt \psi, \pi)\in \CV(\eta)} \bigg\{\sum_{i\in \LL} \half \int_0^T \|u_i(s)\|^2 \pi_i(s)ds + \sum_{i\in \LL} \int_{[0,T]\times \Upsilon} \half 
|\wdh \psi_i(s,\xi)|^2 \pi_i(s) \nu(d\xi) ds \\
\aad \qquad\qquad\qquad\qquad \quad  +  \sum_{(i,j)\in \TT} \half \int_{[0,T]\times [0,\zeta]} |\wdt \psi_{ij}(s,z)|^2 \pi_i(s) \la_\zeta(dz) ds \bigg\},
\eea
where $\CV(\eta)$ is the collection of all $\{u=(u_i), \wdh\psi=(\wdh \psi_i), \wdt \psi=(\wdt \psi_{ij}), \pi=(\pi_i)\}$ belonging to 
\bea
\MM([0,T];\rr^d)^{|\LL|} \times (L^2([0,T]\times \Upsilon))^{|\LL|} \times (L^2([0,T]\times [0,\zeta]))^{|\TT|} \times \MM([0,T];\CP(\LL))
\eea
such that for each $i\in \LL$ and each $(i,j)\in \TT$, $\int_0^T \|u_i(s)\|^2 \pi_i(s)ds <\infty$,
\bea\disp
 \int_{[0,T]\times \Upsilon} |\wdh \psi_i(s,\xi)|^2 \pi_i(s) \nu(d\xi)ds <\infty, \quad
 \int_{[0,T]\times [0,\zeta]} |\wdt \psi_{ij}(s,z)|^2 \pi_i(s) \la_\zeta(dz) ds<\infty,
 \eea 
 and for each $i\in \LL$, 
 \bea
 \eta(t)\ad = \int_0^t \nabla \bar b (\bar X(s)) \eta(s)ds + \sum_{i\in \LL} \int_0^t \sg_i(\bar X(s)) u_i(s) \pi_i(s)ds \\
 \aad + \int_{[0,t]\times\Upsilon} \nabla_x \bar \chi(\bar X(s),\xi)\eta(s) \nu(d\xi)ds  + \sum_{i\in \LL}\int_{[0,t]\times \Upsilon}  \bar{\chi}(\bar X(s),\xi)\wdh \psi_i(s,\xi) \nu(d\xi)ds \\
 \aad + \sum_{(i,j)\in \TT} \int_{[0,t]\times [0,\zeta]} [\Phi(\bar X(s), j) - \Phi(\bar X(s),i)] \indi_{E_{ij}(\bar X(s))}(z) \wdt \psi_{ij}(s,z) \la_\zeta(dz)ds \\
 \aad + \int_{\Upsilon} \sum_{(i,j)\in \TT} \int_0^t \int_0^{\zeta} \big[\Psi(\bar X(s), j, \xi) -\Psi(\bar X(s), i, \xi)\big] \indi_{E_{ij}(\bar X(s))}(z)\wdt \psi_{ij}(s,z) \la_\zeta(dz) ds \nu(d\xi)
 \eea
 and for a.e. $s\in [0,T]$ and $j\in \LL$,
 \bea\ad
 \sum_{i\in\LL} \pi_i(s)\Ga_{ij}(\bar X(s))=0. 
\eea
\end{thm}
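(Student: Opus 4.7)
The plan is to follow the weak-convergence program of \thmref{thm:main}, enriched to handle the jumps of $X^\e$ driven by $N^{1/\e}$. I start from the variational representation of Budhiraja--Dupuis--Maroulas \cite{BDM11,BCD13} for positive functionals of a Brownian motion together with the two families of Poisson random measures $\{\bar N_{ij}\}_{(i,j)\in \TT}$ and $N^{1/\e}$. This yields three control families $(u^\e,\phi^\e,\varphi^\e)$, where $u^\e$ perturbs the drift of $W$, $\phi^\e=(\phi^\e_{ij})$ reweights the switching intensities as in \eqref{c-sde}, and $\varphi^\e$ plays the analogous role for $N^{1/\e}$. I then introduce the controlled jump diffusion $(X^{\e,u^\e,\phi^\e,\varphi^\e},Y^{\e,u^\e,\phi^\e,\varphi^\e})$ together with $\eta^{\e,u^\e,\phi^\e,\varphi^\e}:=(X^{\e,u^\e,\phi^\e,\varphi^\e}-\bar X)/(\sqe h(\e))$, and decompose it into analogues of $\eta_1^{\e,u^\e,\phi^\e},\dots,\eta_4^{\e,u^\e,\phi^\e}$ from \eqref{b-eta} plus new contributions arising from the Poisson integral: a singular centered drift
\[
\frac{1}{\sqe h(\e)}\int_0^t\!\int_\Upsilon \bigl[\chi(X^{\e,\cdot}(s),Y^{\e,\cdot}(s),\xi)-\bar\chi(X^{\e,\cdot}(s),\xi)\bigr]\nu(d\xi)\,ds,
\]
a Taylor remainder $\int_0^t\!\int_\Upsilon \nabla_x\bar\chi(\bar X(s),\xi)\eta^{\e,\cdot}(s)\nu(d\xi)\,ds$, a compensated small-jump martingale of size $O(\sqe/h(\e))$, and a control term involving $\wdh\psi^\e:=(\varphi^\e-1)/(\sqe h(\e))$ integrated against $\chi$.

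The two singular contributions are treated by two Poisson equations on $\LL$: equation \eqref{pq} for $b-\bar b$, solved by $\Phi$, and the parameterised equation \eqref{pq-chi} for $\chi(\cdot,\cdot,\xi)-\bar\chi(\cdot,\xi)$, solved by $\Psi(\cdot,\cdot,\xi)$. Since only the Lipschitz and growth controls (H1)$'$ and (H4) are available, I mollify both solutions as in \lemref{lem:app} to $\Phi_m$ and $\Psi_m(\cdot,\cdot,\xi)$ and apply It\^o's formula; for $\Psi_m$ the formula is used together with an integration against $\nu(d\xi)$, which is legitimate thanks to the $L^1(\nu)\cap L^2(\nu)$ envelopes in (H4) and the a priori bound $\|\Psi(\cdot,\cdot,\xi)\|_\infty\leq K(1+\|x\|)M_\chi(\cdot,\xi)$ obtained as in \thmref{thm:Phi}. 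The resulting identity replaces the singular terms by small-order contributions (mollification error, Brownian martingale, $O(\e/h(\e))$ terms) plus a ``good'' control term of the form $\int_\Upsilon\sum_{(i,j)\in\TT}\int_0^t\!\int_0^\zeta [\Psi(X^{\e,\cdot},j,\xi)-\Psi(X^{\e,\cdot},i,\xi)]\,\indi_{\{Y^{\e,\cdot}=i\}}\indi_{E_{ij}(X^{\e,\cdot})}(z)\,\wdt\psi_{ij}^\e(s,z)\,\la_\zeta(dz)\,ds\,\nu(d\xi)$, exactly paralleling the key estimate of \lemref{lem:eta1}. The new control $\varphi^\e$ on $N^{1/\e}$ is handled with the splitting technique of \cite{BDG16}: split $\wdh\psi^\e$ on $\{|\wdh\psi^\e|\leq \beta/(\sqe h(\e))\}$ versus its complement, and use (H5) together with Lemma~3.1 of \cite{BDG16} to ensure the tail part contributes negligibly to both the cost and the dynamics as $\e\to 0$.

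From here the proof is structurally identical to that of \thmref{thm:main}. I define an occupation measure $\Pi^\e$ on the enlarged space $[0,T]\times\LL\times(\CM_F[0,\zeta])^{|\LL|}\times\CM_F(\Upsilon)\times\rr^d$ recording the time, fast state, switching control, jump control, and drift control; establish tightness of $\eta^{\e,u^\e,\phi^\e,\varphi^\e}$ and of $\Pi^\e$ by repeating the arguments of \propref{prop:tight}; and pass to the limit using the Poisson-equation identities together with \thmref{thm:Phi} to obtain the limiting equation for $\eta$ with the extra $\Psi$-term matching the constraint in $\CV(\eta)$ in the statement of \thmref{thm:mdp-jump}. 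The Laplace upper and lower bounds follow Sections~\ref{sec:up} and~\ref{sec:low}, with \propref{prop:low} extended so that the perturbed $(u^*,\wdh\psi^*,\wdt\psi^*,\pi^*)$ still satisfies a positive lower bound on $\pi^*$. The main obstacle will be uniform-in-$\xi$ control of $\Psi(\cdot,\cdot,\xi)$ and of the associated moment and martingale estimates under only the $L^1(\nu)\cap L^2(\nu)$ envelopes of (H4); the exponential integrability class $\CH^\varsigma$ in (H5) is tailored precisely for this, yielding the uniform bounds needed to make the splitting of $\varphi^\e$ and the limit-identification steps go through in the same way as for the purely switching case.
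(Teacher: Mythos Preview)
The paper does not actually prove \thmref{thm:mdp-jump}: immediately after the statement it writes ``The rigorous proof of \thmref{thm:mdp-jump} is left to our subsequent work.'' So there is no proof in the paper to compare your proposal against.

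That said, your outline is consistent with the scaffolding the paper sets up for this result: it introduces the second Poisson equation \eqref{pq-chi} for $\chi-\bar\chi$ precisely so that the singular term coming from the $\chi$-drift can be handled in parallel with how $\Phi$ handles $b-\bar b$ in \lemref{lem:eta1}, and the extra $\Psi$-term in the constraint defining $\CV(\eta)$ is exactly the object your It\^o-formula computation on $\Psi_m$ would produce. Your use of the Budhiraja--Dupuis--Ganguly splitting for the jump control $\wdh\psi^\e$, with (H5) supplying the exponential integrability needed to make the tail negligible, is also the natural adaptation of what the paper does for $\wdt\psi^\e$. The main technical point you flag---uniform-in-$\xi$ control of $\Psi(\cdot,\cdot,\xi)$ and the associated martingale estimates under the $L^1(\nu)\cap L^2(\nu)$ envelopes of (H4)---is indeed where the work lies, and the paper gives no details on this. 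In short, your plan is a plausible completion of what the paper only announces; whether all the estimates close (in particular the interplay between the mollification in $x$, the integration in $\xi$, and the jump martingale bounds) would need to be checked carefully, which is presumably why the authors deferred the proof.
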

The rigorous proof of \thmref{thm:mdp-jump} is left to our subsequent work.

\subsection{ Countable state Markovian environment}
This paper establishes a moderate deviation principle for stochastic differential equations evolving in a fast-varying random environment modeled by a state-dependent regime switching process on a finite state space. A natural question is whether we could extend the result to the Markov chain with countable state space $\LL=\{1,2,\dots\}$. Such an  extension is highly non-trivial and would likely require arguments beyond the weak convergence method used in this work. Below we outline the primary obstacles that arise when attempting to generalize our results for the countable-state setting.

The most fundamental difficulty concerns the variational representation of a countable-state Markov chain via Poisson random measures (PRMs). When the Markov chain is countable, though we can still represent the Markov chain as an equation driven by PRMs as in \eqref{re-sde}, we instead have \textit{infinite many/countable} PRMs. To our  knowledge, there is no available variational representation for countable PRMs. The literature such as \cite{BCD13,BDG16,BDG18,BDM11} all considered models driven by finite many PRMs. This countable states make our rate function \eqref{rate} problematic. Specifically, for each $i\in \LL, (i,j)\in \TT$, under the conditions that
\bea
\int_0^T \|u_i(s)\|^2 \pi_i(s)ds<\infty, \quad \int_{[0,T]\times[0,\zeta]} |\psi_{ij}(s,z)|^2 \pi_i(s) \la_\zeta(dz)ds < \infty,
\eea
we cannot obtain the finiteness of the rate function in \eqref{rate} as we take infinite summation over indices $i\in \LL$ and $(i,j)\in \LL$, respectively. Thus, some new arguments are needed. Second, the finiteness of the state space of the Markov chain ensures the tightness of random measures defined in \eqref{Pie}. When the state space is countable,  however, additional conditions are needed to ensure tightness; see \cite[Section 6.10]{BD19} for details on this point. The third difficulty lies in the proof of large deviation upper and lower bound, both of which rely on the finiteness of the state space. For instance, estimates such as \eqref{up-e1}
use this finiteness directly. Moreover, in the proof of large deviation upper bound, the construction of $\pi^\dl$ yields estimates \eqref{diff-pi-dl}, which fails in the countable setting, thus requires a new approximation scheme.

In summary, extending our moderate deviation results from finite-state to countable-state Markov chains presents several challenges, with the lack of a variational representation for infinitely many PRMs being the most crucial obstacle. This problem remains open, and we hope to address it in future work.

\subsection{Beyond  Markovian environment}
Similar result can also be established for settings where the fast-varying random environment is modeled by certain jump diffusions in $\rr^{d_2}$, by adapting  the notion of viable pairs \cite{DS12}. For this direction, we consider the following slow-fast stochastic system 
\beq{stable}\left\{\barray
d X^\e(t)\ad =b(X^\e(t), Y^\e(t))dt+ \sqe \sg(X^\e(t), Y^\e(t)) dW(t), \quad X^\e(0)=x_0\in\rr^{d_1}\\
dY^\e(t)\ad =\frac{1}{\e} [-Y^\e(t)+ f(X_t^\e,Y_t^\e)]dt + \frac{1}{\e^{1/\alpha}} dL_t^1, \quad Y^\e(0)=y_0 \in \rr^{d_2},
\earray\right.\eeq
where $L_t^1$ is an isotropic $\al$-stable process with $\al\in(1,2)$ in $\rr^{d_2}$. The associated Poisson random measure for $L_t^1$ is defined by 
\bea\ad 
N((0,t]\times U):=\sum_{s\in (0,t]}\indi_{U}(L_s^1-L_{s-}^1),\quad U\in \CB(\rr^{d_2}\setminus \{0\}), \quad t>0.
\eea
By Le\'{v}y-It\^{o} decomposition \cite{Sat99}, we can write the equation of $Y^\e$ in \eqref{stable} as
\beq{Y-re}\barray
dY^\e(t)\ad =\frac{1}{\e}[-Y^\e(t)+f(X_t^\e, Y_t^\e)]dt + \frac{1}{\e^{1/\alpha}} \int_{|z|\leq 1} z \wdt N(ds, dz)+\frac{1}{\e^{1/\alpha}} \int_{|z|>1} z N(ds,dz) \\
\aad = \frac{1}{\e}[-Y^\e(t)+f(X_t^\e, Y_t^\e)]dt- \frac{1}{\e^{1/\alpha}}  \int_{|z|\leq 1} z \nu(dz)ds +\frac{1}{\e^{1/\alpha}} \int_{\rr^{d_2}} z N(ds, dz).
\earray\eeq
where $N(ds,dz)$ is a PRM on $[0,\infty)\times \rr^{d_2}$ with intensity $\nu(dz)ds=C_{d_2,\al}/|z|^{d_2+\al}dz ds$ with $C_{d_2,\al}$ being a normalization constant and $\wdt N(ds, dz)= N(ds, dz)-\nu(dz)ds$ is the corresponding compensated PRM. The slow dynamics $X^\e$ in \eqref{stable} can also be a jump diffusion like in \eqref{jump}. We here focus on diffusions for simplicity. Under suitable conditions, as $\e\to 0$, $X^\e$ converges weakly to the following deterministic ODE 
\bea
d\bar X(t)= \bar b(\bar X(t))dt, \quad \bar X(0)=x_0,
\eea
where $\bar b$ is given by $
\bar b(x)=\int_{\rr^{d_2}} b(x,y)\mu^x(dy)$ with $\mu^x(dy)$ being the invariant measure of following SDE: for frozen $x\in \rr^{d_1}$,
\bea
dY^x(t)= [- Y^x(t)+f(x,Y^x(t))]dt + dL_t^1, \quad Y^x(0)=y_0\in \rr^{d_1}.
\eea
The operator associated with $Y^x$ is defined by $
\CL_0(x,y):=\CL_{\text{OU}}+ f(x,y)\cdot \nabla_y$, 
where $\CL_{\text{OU}}$ is the Ornstein-Uhlenbeck operator given by $\CL_{\text{OU}}:=\nabla_y^{\al/2}-y\cdot \nabla_y$.

Define $\eta^\e(t):=[X^\e(t)-\bar X(t)]/\sqe h(\e)$, where $h(\e)$ satisfies the condition \eqref{dev-h}. We aim to study the large deviation principle of $\eta^\e$. From \eqref{stable} and \eqref{Y-re}, the slow-fast system can be viewed as being driven by Brownian motions and Poisson random measures, allowing us to employ the weak convergence approach.

For slow-fast processes in Euclidean space, a powerful tool for large deviation analysis is to use the viable pair in \cite{DS12}. Denote by $L^2(\nu)$ the space of $L^2(\rr^{d_2})$ under measure $\nu(dz)$. For any fixed $x,\eta \in \rr^{d_1}, y\in \rr^{d_2}, u\in \rr^{d_1}$ and $\psi\in L^2(\nu)$, let us define 
\beq{viable}
\rho(x,\eta, y,u,\psi):= \nabla \bar{b}(x)\eta+ \sg(x,y) u \text{  and  } \CL_{u,\psi}^x:= \CL_0(x,y)
\eeq
Recall the definition of a viable pair in \cite{DS12}.
\begin{defn}
A pair $(\xi,P)\in C([0,T];\rr^{d_1}) \times \CP(\rr^{d_1} \times L^2(\nu) \times \rr^{d_2} \times [0,T])$ is called a viable pair with respect to $(\rho,\CL_{u,\psi}^x)$ if the followings are satisfied: the function $\xi_t$ is absolutely continuous, $P$ is square integrable in the sense that
\bea\ad
\int_{\rr^{d_1}\times L^2(\nu)\times \rr^{d_2}\times [0,T]} \|u(s)\|^2 + |\psi(s)|_{L^2(\nu)}^2 P(du\, d\psi\, dy\, ds)<\infty
\eea
and the following hold for all $t\in [0,T]$,
\beq{lim-xi}
\xi_t= x_0 + \int_{\rr^{d_1}\times L^2(\nu) \times \rr^{d_2} \times [0,t]} \rho(\bar X(s),\xi_s, y, u,\psi) P(du\,d\psi\,dy\,ds)
\eeq
and for every $h\in \CD(\CL_{u,\psi}^x)$, the domain of opertor $\CL_{u,\psi}^x$, we have 
\beq{fast-cond-1}
\int_{\rr^{d_1}\times L^2(\nu) \times \rr^{d_2}\times [0,t]} \CL_{u,\psi}^{\bar X(s)} h(y) P(du\,d\psi\,dy\,ds)=0.
\eeq
Moreover, $P(\rr^{d_1}\times L^2(\nu) \times \rr^{d_2} \times [0,t])=t$. In this case, we will write $(\xi,P)\in \CV_{\rho,\CL_{u,\psi}^{\bar X}}$.
\end{defn}
\begin{rem}
In the definition of \eqref{viable}, although the right-hand side of expressions does not depend on the control $\psi$, we still display it to highlight that  $\psi$ is a control variable. As in \thmref{thm:main}, we would expect $\psi(s,z)\in L^2([0,T]\times \rr^{d_2})$ with respect to the measure $\nu(dz)ds$ in our cost functional. We would encode $\psi(s,z)$ as a measure-valued process $\psi(s)$ taking values in the space $L^2(\nu)$.
\end{rem}

Consider the following non-local Poisson equation 
\beq{non-local-Poi}
\CL_0(x,y) \Phi(x,y) = - [b(x,y)-\bar b(x)], \quad x\in \rr^{d_1}
\eeq
where $x$ is a parameter. Motivated by the regularity result of above non-local Poisson equation in \cite{RXX24}, we could obtain the following result.
\begin{thm}
Assume assumption (H1)$'$ hold and $f$ is bounded, Lipschitz continuous in $\rr^{d_1}\times \rr^{d_2}$. Then the process $\{X^\e\}_{\e>0}$ of \eqref{stable} satisfies the moderate deviation principle in $C([0,T];\rr^{d_1})$ with speed $h^{-2}(\e)$ and good rate function $I_{\text{stable}}$ defined by
\bea\ad 
I_{\text{stable}}(\eta)= \inf_{(\eta,P) \in \CV_{\rho,\CL_{u,\psi}^{\bar X}}}\bigg\{\half\int_{\rr^{d_1}\times L^2(\nu)\times \rr^{d_2}\times [0,T]} \|u(s)\|^2 +|\psi(s)|_{L^2(\nu)}^2 P(du\, d\psi\,dy\, ds)\bigg\},
\eea
with the convention that the infimum over the empty set is $\infty$.

\end{thm}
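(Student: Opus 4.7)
\medskip

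\noindent\textbf{Proof proposal.} The plan is to mirror the weak convergence approach developed in Sections~\ref{sec:lim}--\ref{sec:low}, but with the finite-state Markov chain replaced by the $\al$-stable driven fast SDE, and the (discrete) Poisson equation \eqref{pq} replaced by the nonlocal Poisson equation \eqref{non-local-Poi}. First, using the L\'evy--It\^{o} decomposition \eqref{Y-re}, the system \eqref{stable} is driven by a Brownian motion $W$ and a single Poisson random measure $N$ on $[0,T]\times (\rr^{d_2}\setminus\{0\})$ with intensity $\nu(dz)ds$. The variational representation of \cite{BDM11,BD98}, applied to $F(\eta^\e)$ with speed $h^{-2}(\e)$ and the change of variables $v=h(\e)u$, $\phi=1+\sqe h(\e)\psi$, yields an analogue of \eqref{var-rep1}: controls $(u^\e,\phi^\e)\in \CU_b^+$ drive controlled processes $(\Xe,\Ye)$ satisfying \eqref{stable} with $W(\cdot)+\int_0^\cdot h(\e) u^\e(s)ds$ in place of $W$ and with the compensated intensity $\e^{-1}\phi^\e(s,z)\nu(dz)ds$ on $N$. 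Set $\etae:=(\Xe-\bar X)/(\sqe h(\e))$.

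Second, one decomposes $\etae$ exactly as in \eqref{b-eta}, where the only problematic term is the singular average
\[
\etae_1(t)=\frac{1}{\sqe h(\e)}\int_0^t\big[b(\Xe(s),\Ye(s))-\bar b(\Xe(s))\big]ds.
\]
Let $\Phi(x,y)$ solve \eqref{non-local-Poi}; by the regularity theory of \cite{RXX24}, under (H1)$'$ and the boundedness/Lipschitz assumptions on $f$, $\Phi$ is bounded with bounded derivatives $\partial_x\Phi,\partial_y\Phi$ in a suitable H\"older/Sobolev sense. Applying the It\^{o} formula for jump diffusions to $(\sqe/h(\e))\Phi_m(\Xe,\Ye)$, where $\Phi_m$ is a mollification in the spirit of \lemref{lem:app} if further smoothness in $y$ is required to handle the nonlocal operator, and rearranging the $\e^{-1}\CL_0(x,y)\Phi$ term (as in \eqref{eta1-1}--\eqref{eta1-2}) produces a martingale remainder plus controlled terms of order $\sqe/h(\e)$, $\e$, $\e/h(\e)$, $1/h(\e)$, together with the key term
\[
\int_{[0,t]\times\rr^{d_2}}\big[\Phi(\Xe(s),\Ye(s-)+z)-\Phi(\Xe(s),\Ye(s-))\big]\psi^\e(s,z)\nu(dz)\,ds,
\]
which, after the splitting $\psi^\e=\psi^\e\indi_{\{|\psi^\e|\le\beta/\sqe h(\e)\}}+\psi^\e\indi_{\{|\psi^\e|>\beta/\sqe h(\e)\}}$ of \cite{BDG16} and the inequalities in \lemref{lem:ineq} and \lemref{lem:ineq-2}, is the candidate limit. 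Uniform $L^2$-control of the second summand through the cost $L_{T,2}(\phi^\e)\le M\e h^2(\e)$ produces the analogues of \eqref{eta1-bdd}--\eqref{eta1-tight}.

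Third, to identify the limit, define the occupation-type measure $\Pi^\e \in \CP_{\text{leb}}(\rr^{d_1}\times L^2(\nu)\times \rr^{d_2}\times[0,T])$ by
\[
\Pi^\e(A\times C\times D\times[a,b])=\int_a^b \indi_A(u^\e(s))\indi_C\big(\psi^\e(s,\cdot)\indi_{\{|\psi^\e|\le \beta/\sqe h(\e)\}}\big)\indi_D(\Ye(s))\,ds.
\]
Tightness in the $u$-marginal follows from $L_{T,1}(u^\e)\le M$; tightness in the $\psi$-marginal in $L^2(\nu)$ uses \lemref{lem:ineq} (iii) as in \eqref{Pi-e}; tightness in the $y$-marginal requires an extra step absent from the finite-state case, namely a uniform moment estimate of the form $\EE\sup_{t\in[0,T]}|\Ye(t)|^p\le K_p$ for some $p<\al$, obtained from the OU-type stable SDE using that $L^1$ has moments of order $<\al$ and from the Lipschitz/boundedness of $f$. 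Passing to the limit and using the viability identities \eqref{lim-xi}--\eqref{fast-cond-1}, the weak limit $(\eta,\Pi)$ is shown to be a viable pair with respect to $(\rho,\CL^x_{u,\psi})$ by testing \eqref{fast-cond-1} against a suitable class of functions $h\in\CD(\CL^x_{u,\psi})$, in perfect analogy with \eqref{dig-fast}--\eqref{e-M}.

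Finally, the Laplace lower bound (LDP upper) is obtained as in \thmref{thm:up} by sending $\e\to 0$ in \eqref{var-rep1}, using the splitting inequality \lemref{lem:ineq} (iv) to replace the cost $L_{T,2}(\phi^\e)/(\e h^2(\e))$ by $\half|\psi^\e|^2_{L^2(\nu)}$ up to an error of order $\beta$, and then letting $\beta\to 0$. The Laplace upper bound (LDP lower) is obtained via an analogue of \propref{prop:low}: given a near-optimal viable pair $(\eta,P)$ with cost $\le I_{\text{stable}}(\eta)+\ga$, one first approximates $P$ by a nondegenerate version (mixing with the invariant measure $\mu^{\bar X(s)}$) and then constructs feedback controls $u^\e(s)=u(s,\Ye(s-))$ and $\phi^\e(s,z)=1+\sqe h(\e)\psi(s,z,\Ye(s-))\indi_{\{|\psi|\le\beta/\sqe h(\e)\}}$, showing convergence of cost as in \thmref{thm:ld-lower}. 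The main obstacle in the whole scheme is regularity and tail control for $\Phi$ solving the nonlocal equation \eqref{non-local-Poi}: unlike the finite-state setting, $\Phi$ is defined on $\rr^{d_1}\times\rr^{d_2}$ and composed with the unbounded fast variable $\Ye(s-)+z$ with $z$ integrated against a heavy-tailed $\nu$, so pointwise bounds of Bismut--Elworthy type from \cite{RXX24}, together with moment estimates on $\Ye$ of order $<\al$, are essential to make the It\^{o} computation and the subsequent tightness/cost convergence rigorous.
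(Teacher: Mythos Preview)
Your overall architecture (variational representation, controlled dynamics, Poisson-equation corrector, occupation measures, viable pairs) matches the paper's sketch, but you have the central scaling wrong, and this leads you to the wrong ``candidate limit''. In the fast equation \eqref{stable} the jump increment of $\Ye$ is $\e^{-1/\al}z$, not $z$. When you apply It\^{o}'s formula to $\Phi(\Xe,\Ye)$ the controlled jump contribution is
\[
\CM_0^\e(t)=\int_0^t\!\!\int_{\rr^{d_2}}\big[\Phi(\Xe(s),\Ye(s-)+\e^{-1/\al}z)-\Phi(\Xe(s),\Ye(s-))\big](\phi^\e(s,z)-1)\,\nu(dz)\,ds,
\]
and after the rescaling by $\e/(\sqe h(\e))$ used in the analogue of \eqref{eta1-2} this becomes
\[
\frac{\e}{\sqe h(\e)}\CM_0^\e(t)=\e\int_0^t\!\!\int_{\rr^{d_2}}\big[\Phi(\Xe(s),\Ye(s-)+\e^{-1/\al}z)-\Phi(\Xe(s),\Ye(s-))\big]\psi^\e(s,z)\,\nu(dz)\,ds.
\]
By the mean-value theorem this carries a prefactor $\e\cdot\e^{-1/\al}=\e^{1-1/\al}$, which tends to $0$ because $\al\in(1,2)$. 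Hence this term \emph{vanishes} in the limit and is \emph{not} the candidate limit; this is precisely why $\rho$ in \eqref{viable} and the constraint \eqref{fast-cond-1} contain neither $\psi$ nor $\Phi$, in sharp contrast to \eqref{eta-def} in the finite-state case. Your proposal, which treats the displayed integral as the surviving term analogous to $N_\beta^{\psi^\e}$, would produce a $\Phi$- and $\psi$-dependent limiting equation inconsistent with the stated rate function $I_{\text{stable}}$.

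Note also that the paper does not give a full proof here: it explicitly defers the rigorous argument to subsequent work and only records the heuristic scaling computation above (together with the analogous remark for \eqref{fast-cond-1}). So the relevant comparison is whether you have captured that scaling insight, and at present you have not.
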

\begin{proof}
The rigorous proof is omitted and will be presented in our subsequent work. Here, we provide a brief justification explaining why $\rho$ and $\CL_{u,\psi}^x$, as defined in \eqref{viable}, are independent of $\psi$ and the solution to the Poisson equation \eqref{non-local-Poi}. This independence extends to their appearance in the limiting dynamics \eqref{lim-xi} and the constraint on the fast process \eqref{fast-cond-1}.

In line with \secref{sec:pre}, we therefore introduce the following controlled dynamics: 
\bea
d\Xe(t)\ad =b(\Xe(t),\Ye(t))dt+ \sqe \sg(X^\e(t), Y^\e(t))dW(t)\\
\aad \quad +\sqe h(\e)\sg(\Xe(t), \Ye(t)) u^\e(t)dt \\ d\Ye(t)\ad = \frac{1}{\e} [-\Ye(t)+f(\Xe(t), \Ye(t))]dt\\
\aad\quad - \frac{1}{\e^{1/\alpha}} \int_{|z|\leq 1} z\nu(dz)ds + \frac{1}{\e^{1/\alpha}} \int_{\rr^{d_2}} z N^{\phi^\e}(ds,dz).
\eea
where $N^{\phi^\e}$ is controlled PRM with intensity $\phi^\e(s,z)\nu(dz)ds$. Denote by $\wdh X^\e=\Xe$ and $\wdh Y^\e=\Ye$. Note that the dynamics $\wdh Y^\e$ can be rewritten as 
\bea
d\wdh Y^\e(t) \ad = \frac{1}{\e}\big[-\wdh Y^\e+f(\wdh X^\e(t),\wdh Y^\e(t))\big] dt + \frac{1}{\e^{1/\alpha}}dL_t^1 \\
\aad \quad +\frac{1}{\e^{1/\alpha}} \Big\{\int_{\rr^{d_2}} z N^{\phi^\e}(ds,dz)- \int_{\rr^{d_2}} z N(ds,dz)\Big\}.
\eea
Applying It\^{o} formula to  the solution of nonlocal Poisson equations \eqref{non-local-Poi}, we have
\beq{Ito-phi-nonlocal}\barray\ad 
\Phi(\wdh X^\e(t), \wdh Y^\e(t))\\
\aad = \Phi(x_0,y_0)+\int_0^t \Big(\frac{1}{\e}\CL_0+\CL_1^\e \Big) \Phi(\wdh X^\e(s),\wdh Y^\e(s))ds + \CM_0^\e(t)+\CM_1^\e(t)+\CM_2^\e(t),
\earray\eeq
where $\CL_1^\e$ is defined as 
\bea\ad 
\CL_1^\e := \Big[b(x,y)+\sqe h(\e)\sg(x,y)u\Big] \cdot \nabla_x + \frac{\e}{2} \text{Tr}\big[\sg\sg^\top(x,y)\nabla_x^2 \big]
\eea
and for $i=0,1,2$, $\CM_i^\e(t)$ are defined by 
\bea
\CM_0^\e(t)\ad :=\int_0^t \int_{\rr^{d_2}} \Phi(\wdh X^\e(s),\wdh Y^\e(s)+\e^{-1/\al}z)-\Phi(\wdh X^\e(s), \wdh Y^\e(s))[\phi^\e(s,z)-1] \nu(dz)ds\\
\CM_1^\e(t)\ad = \sqe \int_0^t \nabla_x \Phi(\wdh X^\e(s), \wdh Y^\e(s)) \sg(\wdh X^\e(s), \wdh Y^\e(s))dW(s) \\
\CM_2^\e(t)\ad = \int_0^t \int_{\rr^{d_2}} \Big[\Phi(\wdh X^\e(s),\wdh Y^\e(s)+\e^{-1/\al}z)-\Phi(\wdh X^\e(s), \wdh Y^\e(s)) \Big] \wdt N^{\phi^\e}(ds,dz).
\eea

We then define $\etae=(\Xe-\bar X)/\sqe h(\e)$. As in the decomposition \eqref{b-eta}, we must rewrite the term $\etae_1$ defined in \eqref{b-eta} using the solution of the Poisson equation, following the same steps as in \eqref{Ito-Phi}, \eqref{eta1-1} and \eqref{eta1}. To do this, we multiply both sides of \eqref{Ito-phi-nonlocal} by $\e$, divide by $\sqe h(\e)$ and rearranging the resulting terms in analogy with \eqref{eta1-2}. The  only term require additional attention is $\CM_0^\e$; all other terms converge to zero in suitable sense. Setting $\psi^\e=(\phi^\e-1)/\sqe h(\e)$, the mean-value theorem yields
\bea\disp 
\frac{\e}{\sqe h(\e)}\CM_0^\e(t) \ad = \e \int_0^t \int_{\rr^{d_2}} \Big[ \Phi(\wdh X^\e(s),\wdh Y^\e(s)+\e^{-1/\al}z)-\Phi(\wdh X^\e(s), \wdh Y^\e(s))\Big] \psi^\e(s,z)\nu(dz)ds \\
\aad = \e \int_0^t \int_{\rr^{d_2}} \nabla_y \Phi(\wdh X^\e(s), \wdh Y^{+,\e}(s))\frac{z}{\e^{1/\al}}\psi^\e(s,z) \nu(dz)ds
\eea
for some $\wdh Y^{+,\e}(s)$ lying between $\wdh Y^\e(s)$ and $\wdh Y^\e(s)+\e^{-1/\al}z$. 
Thanks to $\e^{1-1/\al}\to 0$ for $\al\in (1,2)$ as $\e\to 0$ and the boundedness of $\Phi$, we can obtain
$
\e/(\sqe h(\e)) \CM_0^\e(t) \to 0, \quad \text{as } \e \to 0$. 
This establishes that the limiting dynamics of $\etae$ are independent of $\psi$ and $\Phi$. Analogously, it can be verified that the constraint condition \eqref{fast-cond-1} is also independent of them.
\end{proof}

\medskip

\textbf{Acknowledgments.} 
The author would like to thank two anonymous referees and the editors for their insightful comments and suggestions.

\bibliographystyle{plain}



\end{document}